\NewDocumentCommand{\mgn}{}{\mathcal M_{g,n}}
\NewDocumentCommand{\tgn}{}{\mathcal T_{g,n}}
\NewDocumentCommand{\mgnb}{}{\overline{\mathcal M}_{g,n}}
\NewDocumentCommand{\teich}{}{\text{Teichm\"uller}}
\DeclarePairedDelimiter\norm{\lVert}{\rVert}
\DeclareMathOperator{\arcsinh}{arcsinh}
\DeclareMathOperator{\arccosh}{arccosh}
\DeclareMathOperator{\crit}{Crit}
\DeclareMathOperator{\ind}{ind}
\DeclareMathOperator{\sys}{sys}
\DeclareMathOperator{\ex}{exp}
\DeclareMathOperator{\proj}{proj}
\DeclareMathOperator{\spn}{Span}
\DeclareMathOperator{\intr}{int}
\DeclareMathOperator{\rank}{rank}
\DeclareMathOperator{\syst}{sys_T}
\DeclareMathOperator{\secsys}{sec.sys}
\newtheorem{theorem}{Theorem}[section]
\newtheorem*{main}{Main Theorem}
\newtheorem{lemma}[theorem]{Lemma}
\newtheorem{corollary}[theorem]{Corollary}
\newtheorem*{convention}{Convention}
\theoremstyle{definition}
\newtheorem{definition}[theorem]{Definition}
\newtheorem*{definition*}{Definition}
\theoremstyle{remark}
\newtheorem{remark}[theorem]{Remark}
\numberwithin{equation}{subsection}
\title{Morse theory on moduli of curves}
\author{Changjie Chen}
\begin{document}

\begin{abstract}
    We provide a new approach to studying the moduli space of curves via Morse theory and hyperbolic geometry, by introducing a family of Morse functions on the moduli space $\mgnb$ of stable curves of genus $g$ with $n$ marked point, from the $\teich$ theoretic perspective. They are weighted exponential averages of the lengths of all simple closed geodesics. These Morse functions behave well with respect to the Deligne-Mumford stratification of $\mgnb$. The critical points can be characterized by a combinatorial property named eutacticity, and the Morse index can be computed accordingly. Also, the Weil-Petersson gradient flow of the Morse functions is well defined on $\mgnb$, which can be used to build the Morse theory.
    
    These functions might be the first explicit examples of Morse functions on $\overline{\mathcal M}_{g,n}$, and the Morse handle decomposition gives rise to the first example of a natural cell decomposition of $\mgnb$ in theory, that works for all pairs $(g,n)$.
\end{abstract}

\maketitle

\tableofcontents

\section{Introduction}

The moduli space $\mgn$ of smooth projective curves, and $\mgnb$ of stable curves, of genus $g$ with $n$ marked points, are fundamental objects in algebraic geometry, topology, and many other areas. They can also be defined using other structures, such as Riemann surface structures, complete hyperbolic metrics, conformal structures, etc, due to equivalences in real dimension 2, to parametrize isomorphism classes of the respective structures.

The spaces, as varieties, stacks, topological spaces, or orbifolds, have been extensively studied with various powerful methods developed, particularly within the frameworks of algebraic geometry and topology, leading to significant results concerning the topology of the moduli space and related, although many aspects of the topology and geometry still remain subtle.

The goal of this paper is to provide a brand new perspective to study the moduli space with the use of Morse theory with differential geometry. Morse theory has been a powerful tool for understanding complicated spaces through handle decompositions, providing deep insights into their topological and geometric properties; cf. \cite{milnor2016morse}.

In this paper, we will construct and study a family of Morse functions (see definition on the next page) on the moduli space $\mgnb$, from the $\teich$ theoretic perspective, and explore how Morse theory is applied to these functions. The work is inspired by results on the \textit{systole} function, which satisfies a weak version of Morse property but unfortunately does not generate a Morse theory (cf. Remark \ref{nomorse}), although the math is mutually independent.

Geodesic-length functions play a fundamental role here. The \textit{systole} function, $\sys\colon \mgn\to\mathbb R_+$, is defined to be the length of a shortest geodesic, or equivalently the minimum of all geodesic-length functions. \textit{Topological Morse property} (cf. \cite{morse1959topologically}) was shown by Schmutz Schaller in \cite{schaller1998geometry} for the systole function on $\tgn$ with $n>0$. In \cite{akrout2003singularites}, Akrout showed that for any $\tgn$ by developing a theory on \textit{(semi-)eutacticity} differentiating surfaces by the position of gradient vectors of geodesic-length functions.

Aside from the math originating from geodesic-length functions, as another candidate, $-\log\det^*\Delta_m$ on the space of hyperbolic structures $\{m\}$ on a given surface was conjectured by Sarnak to be a Morse function, where $\det^*\Delta_m$ is the zeta regularized product of all non-zero eigenvalues of $\Delta_m$, see \cite{quine1997extremal}. It seems that very little study followed and this conjecture still remains wide open.

\begin{definition*}
Let $0<T<1$ be a parameter. For a hyperbolic surface $X\in\mgn$, let
$$\syst(X)=-T\log\left(\sum_{\gamma \text{ s.c.g. on } X} e^{-\frac1Tl_\gamma(X)}\right),$$
where s.c.g. stands for simple closed geodesic. For a nodal surface $X\in\partial\mgn$ with $s$ nodes, let $$\syst(X)=-T\log\left(s+\sum_{\gamma \text{ s.c.g. on } X} e^{-\frac1Tl_\gamma(X)}\right).$$
\end{definition*}

The $\syst$ functions are a family of weighted exponential averages of geodesic-length functions, which involve not only the shortest closed geodesics but all simple closed geodesics in a global way. As $T$ decreases to 0, $\syst$ increases and converges to $\sys$ on $\mgnb$.

\begin{convention}
    For the following remarks, throughout the paper, unless otherwise specified, a function is $C^2$-Morse whenever it is said Morse.
\end{convention}

\begin{remark}
    A Morse function is defined to be smooth, in currently the most popular version. However, to define critical points and nondegeneracy, to construct the handle decomposition, and to apply Morse theory, it is allowed to weaken the smoothness condition to $C^2$-continuity (or actually even weaker), and we may call such functions \textit{$C^2$-Morse functions}.
\end{remark}
\begin{remark}
    A slight modification of $\syst$ yields smooth Morse functions on $\mgnb$ with exactly the same properties as in the main theorems as well as almost anywhere else. Let $f$ be a smooth bump function on $\mathbb R$ that is equal to 1 on $[0,\sys(g,n)]$ and is supported on its 1-neighborhood, where $\sys(g,n):=\max_{X\in\mgn}\{\sys(X)\}$, then the function $$\sys_T^f(X)=-T\log\sum_{\gamma \text{ s.c.g. on } X}f(l_\gamma(X)) e^{-\frac1Tl_\gamma(X)}$$ is such an example as desired.
\end{remark}

\begin{main}[Part I]
\label{main1}
    For sufficiently small $T>0$, $\syst$ is a Morse function on the Deligne-Mumford compactification $\overline{\mathcal M}_{g,n}$, with the differential structure defined in Section \ref{extension}.
\end{main}

The $\syst$ functions behave well with respect to the stratification of $\mgnb$. In every single stratum, the critical points can be related to eutactic points (cf. Definition \ref{eutactic}).

\begin{main}[Part II]
\label{main2}
    In $\mgn$, $\syst$-critical points and eutactic points are in pairs, in each of which the two points, say $p_T$ and $p$, have Weil-Petersson distance $d_{WP}(p_T,p)<CT$, and the Morse index of $p_T$ is equal to the eutactic rank of $p$. Consequently, the former converges to the latter as $T\to0^+$.
\end{main}

The following states additivity of Morse index for nodal surfaces that are the union of smaller surfaces along nodes, which allows potential induction arguments.

\begin{main}[Part III]
\label{main3}
    Let $\mathcal S$ be a boundary stratum of $\mgnb$ that is canonically isomorphic to the product $\prod \mathcal M_i$ of moduli spaces, and $X=\coprod_{\{\text{nodes}\}} X_i\in\mathcal S$ be the decomposition with respect to the stratification such that $X_i\in\mathcal M_i$. The following are equivalent:

    (1) $X$ is a critical point of $\syst$ on $\mgnb$;
    
    (2) $X_i$ is a critical point of $\syst$ on $\mathcal M_i$, for all $i$.
    
    And in this case, $$\ind(X)=\sum\ind(X_i).$$
\end{main}

It is known that the Weil-Petersson metric blows up when it is extended onto $\mgnb$, however, it can still be used for the $\syst$-Morse theory due to the following.

\begin{main}[Part IV]
\label{main4}
    The Weil-Petersson gradient flow of $\syst$ on $\overline{\mathcal M}_{g,n}$ is well defined. Each gradient flow line stays in a single stratum and flows down from a critical point to a critical point in the same stratum or converges to a critical point in its boundary stratum.
\end{main}

The author believes that the $\syst$ functions are in fact smooth. To the best of the author's knowledge, they are the first explicit examples in literature, of Morse functions on the moduli space as well as the Deligne-Mumford compactification. With the Morse property, they can be used to study the topology of $\mgn$ or $\mgnb$. They can also be used to study the systole function itself given the index-preserving critical point attracting property as in Part II of the main theorems.

There has been discussion and construction of cell decompositions of the moduli space $\mgn$, when $n>0$, using quadratic differentials or fat graphs, for example, \cite{arbarello2011geometry} and \cite{norbury2010cell}. However, it does not appear that there is a universal construction of a cell decomposition of $\mgn$ or $\mgnb$ for all $(g,n)$, that is, no matter if $n$ is positive or equal to 0. The $\syst$-Morse handle decomposition of $\mgnb$ always gives rise to a cell decomposition.

As the Morse property of $\syst$ functions is established, many related questions can be asked, for example, about the distribution of critical points, that is partially answered in \cite{chen2023c2}, which implies certain homology conclusions as an application.

\begin{figure}[ht]
    \centering
    \includegraphics[width=9cm]{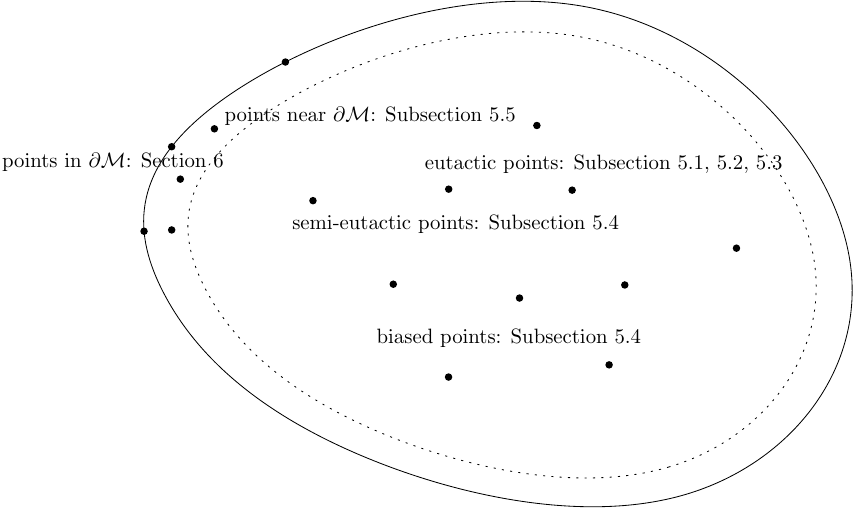}
    \caption{Organization of the paper by types of points in $\mgnb$}
    \label{fig:all types}
\end{figure}

The organization of this paper is as follows. In Section \ref{prelim}, we review some basics in hyperbolic geometry and $\teich$ theory and list a few theorems that will be used later. In Section \ref{Akrout}, we talk about Akrout's $\text{(semi-)}$eutacticity theory and his theorem on the systole function. We also develop general knowledge including the \textit{fan decomposition} that will be used as the low-level idea to study the critical points later. Subsection \ref{behaviorinmajor} is focused on the behavior of $\syst$ on the \textit{major subspace} and Subsection \ref{behavior} reveals its full local behavior around eutactic points. Subsection \ref{nondegeneracy} shows nondegeneracy of $\syst$ at its critical points; Subsection \ref{regular} studies its behavior near semi-eutactic and biased points; Subsection \ref{boundary} studies that near points near the Deligne-Mumford boundary. The extension of $\syst$ to the boundary is introduced in Section \ref{extension} to complete the proof of the main theorem. We study the Weil-Petersson gradient flow of $\syst$ and give an example of that on $\overline{\mathcal M}_{1,1}$ in Section \ref{case}. Some applications of Morse theory are presented in Section \ref{applications}.

A table of commonly used notations in this paper is attached at the end.

\section{Preliminaries and a few theorems}
\label{prelim}

\subsection{Moduli space and $\teich$ space}
Let $X$ be a smooth surface with negative Euler characteristic and $\mathcal M_{-1}(X)$ the set of all complete finite area hyperbolic metrics on $X$. Let $\textit{Diff}_+(X)$ be the set of all orientation-preserving diffeomorphisms and $\textit{Diff}_0(X)\subset\textit{Diff}_+(X)$ the subset consisting of those homotopic to the identity map. Both groups act on $\mathcal M_{-1}(X)$ by pullback of metrics. The quotient $\textit{Diff}_+(X)/\textit{Diff}_0(X)$ is known as the \textit{mapping class group} $\textit{Mod}(X)$. The \textit{moduli space} is $$\mathcal M(X):=\mathcal M_{-1}(X)/\textit{Diff}_+(X),$$ and the \textit{Teichm\"uller space} is $$\mathcal T(X):=\mathcal M_{-1}(X)/\textit{Diff}_0(X),$$ which is the universal cover of $\mathcal M(X)$ and $$\mathcal M(X)=\mathcal T(X)/\textit{Mod}(X).$$

We will frequently use a hyperbolic surface $X$ instead of a hyperbolic metric $m$ on the surface $X$, by abuse of notation, to refer to a point in the $\teich$ space or the moduli space. For surfaces $X_1$ and $X_2$ that are diffeomorphic via $f$, the moduli spaces $\mathcal M(X_1)$ and $\mathcal M(X_2)$ are isomorphic by pullback of $f$. Because of that, when $X$ is of genus $g$ and with $n$ punctures, we may use $\mathcal T_{g,n}$ and $\mgn$ for $\mathcal T(X)$ and $\mathcal M(X)$, respectively.

One may see \cite{imayoshi2012introduction} or \cite{hubbard2016teichmuller} for more on $\teich$ theory including alternative definitions using markings, Beltrami differentials, etc.

\subsection{Weil-Petersson metric}
The $L^2$-metric on $\mathcal M_{-1}$ at $m=Edx^2+2Fdxdy+Gdy^2$ is defined as
$$\left\langle \sigma_1,\sigma_2 \right\rangle_{L^2}=\int_X \frac{A_1A_2+2B_1B_2+C_1C_2}{EG-F^2}dxdy,$$
for $\sigma_i=A_idx^2+2B_idxdy+C_idy^2\in T_m\mathcal M_{-1}, i=1,2$.

The $L^2$-metric descends onto the $\teich$ space and the moduli space, in which cases is known as the \textit{Weil-Petersson metric}.

\subsection{Fenchel-Nielson coordinates}
Suppose that $X$ is a hyperbolic surface of genus $g$ with $n$ punctures. One can find exactly $3g-3+n$ mutually disjoint simple closed geodesics on $X$ so that there are no more simple closed geodesics disjoint from them. Cutting $X$ along these geodesics will yield $2g-2+n$ three times holed/punctured spheres, each of which is called a pair of \textit{pants}. A pair of pants is uniquely determined by the lengths of its three cuffs. A \textit{pants decomposition} of $X$ is any such maximal collection of disjoint simple closed geodesics.

Given a pants decomposition $\{\gamma_1,\cdots,\gamma_{3g-3+n}\}$ of $X$, for every $\gamma_i$, one can assign two numbers, $l_i\in\mathbb R_+$, known as its \textit{length parameter}, and $\tau_i\in\mathbb R$, , known as its \textit{twist parameter}. For any such $(l_i,\tau_i)_i$, one can construct a new surface by cutting $X$ along all $\gamma_i$, changing the length of each $\gamma_i$ to $l_i$ and regluing the pants along each $\gamma_i$ after twisting $\tau_i$. This surgery gives a bijective map between $(\mathbb R_+\times\mathbb R)^{3g-3+n}$ to $\mathcal T(X)$, and $(l_i,\tau_i)_i$ is known as the \textit{Fenchel-Nielson coordinates} associated to this pants decomposition.

\subsection{Geodesic-length function}
Given any non-constant non-peripheral free homotopy class $\gamma$ on $X$, there exists a unique geodesic representative with respect to a hyperbolic metric $m$, which is the unique minimizer for the length of a curve in $\gamma$. Let the \textit{geodesic-length function} $l_\gamma$ associated to $\gamma$ at $(X,m)$ be the length of the geodesic representative.

\begin{theorem}[Wolpert \cite{wolpert1987geodesic}]
\label{convex}
Any geodesic-length function $l_\gamma$ is real analytic on $\mathcal T$ and strictly convex along Weil-Petersson geodesics.
\end{theorem}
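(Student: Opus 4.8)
The proof naturally splits into the (easier) analyticity statement and the (harder) convexity statement. For real-analyticity I would argue via the holonomy. A point of $\mathcal T$ is the same datum as a conjugacy class of discrete faithful representations $\rho\colon\pi_1(X)\to\mathrm{PSL}(2,\mathbb R)$, and this identification realizes $\mathcal T$ as an open subset of a real-analytic submanifold of the character variety, hence equips it with a real-analytic structure (the same one underlying the real-analytic Fenchel--Nielsen charts). For a non-constant, non-peripheral class $\gamma$, discreteness forces $\rho(\gamma)$ to be hyperbolic, so $|\operatorname{tr}\rho(\gamma)|>2$ and $l_\gamma=2\arccosh\!\big(\tfrac12|\operatorname{tr}\rho(\gamma)|\big)$. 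Since $\rho\mapsto\operatorname{tr}\rho(\gamma)$ is polynomial in the matrix entries, descends to the character variety, and $\arccosh$ is real-analytic on $(1,\infty)$, the composite $l_\gamma$ is real-analytic on all of $\mathcal T$. (Equivalently, one may express $l_\gamma$ in Fenchel--Nielsen coordinates through trace identities and check analyticity there.)

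For the convexity statement I would first reduce to a pointwise Hessian estimate: along any Weil--Petersson geodesic $c(t)$ one has $\frac{d^{2}}{dt^{2}}l_\gamma(c(t))=\operatorname{Hess}l_\gamma(\dot c,\dot c)$ because $\nabla_{\dot c}\dot c=0$, so it suffices to prove that the Weil--Petersson Hessian of $l_\gamma$ is positive-definite at every $X\in\mathcal T$. Fixing $X$, I would identify $T_X\mathcal T$ with the space of harmonic Beltrami differentials — the conjugates of holomorphic quadratic differentials divided by the hyperbolic area density — and recall Gardiner's first-variation formula $dl_\gamma(\mu)=\tfrac{2}{\pi}\operatorname{Re}\int_X\mu\,\overline{\Theta_\gamma}$, where $\Theta_\gamma=\sum_{A\in\langle\gamma\rangle\backslash\Gamma}A^{*}(dz/z)^{2}$ is the relative Poincaré series of $\gamma$, an integrable holomorphic quadratic differential on $X$.

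The substance of the proof is the second variation. Differentiating Gardiner's formula along a family $X_t$ one must differentiate both the pairing and $\Theta_\gamma$, and — this is the delicate point — account for the fact that the tautological infinitesimal deformation along the family is no longer harmonic, so it must be projected back onto the harmonic Beltrami differentials; equivalently one has to invert the operator $\Delta-2$ (with $\Delta\le 0$ the hyperbolic Laplace--Beltrami operator), whose Green's function is everywhere positive by the maximum principle. Carrying out this computation — which is Wolpert's — produces an expression for $\ddot l_\gamma$ as an integral of terms that, by the positivity of that Green's function together with the positivity built into the relative Poincaré series, are each nonnegative, and one then checks that the expression is strictly positive whenever $\mu\neq 0$. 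This gives positive-definiteness of the Hessian, hence strict convexity.

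I expect the main obstacle to be exactly this second-order variational bookkeeping: tracking the non-harmonic part of the velocity and the curvature term, and then reorganizing the resulting integrand into a manifestly positive quantity via the positivity of $(\Delta-2)^{-1}$. An alternative route, which I would take if the Beltrami-differential calculus becomes unwieldy, is to use Wolf's harmonic-maps parametrization of $\mathcal T$ by holomorphic quadratic differentials and compute the Hessian through the linearized harmonic-map equation, where positive-definiteness again emerges from a maximum-principle argument.
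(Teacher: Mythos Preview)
The paper does not prove this theorem at all: it is stated in the preliminaries as a cited result of Wolpert \cite{wolpert1987geodesic} and used later as a black box (e.g.\ for the definition of $\theta_0$ and in Lemma~\ref{norm2}). So there is no ``paper's own proof'' to compare against.

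Your outline is a faithful sketch of Wolpert's original argument in \cite{wolpert1987geodesic}: analyticity via the trace/character-variety description, and strict convexity via Gardiner's first-variation formula together with a second-variation computation whose positivity comes from the positivity of the Green's operator $(\Delta-2)^{-1}$. That is exactly the content of Wolpert's paper, so as a blind reconstruction your proposal is on target; but for the purposes of the present paper no proof is expected, only the citation.
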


Let $c_X(L)$ be the number of closed geodesics of length at most $L$ on a finite area hyperbolic surface $X$. The following theorem gives an upper bound of the growth rate of length functions.

\begin{theorem}[Huber \cite{huber1959analytischen}]
    It is universal over all $\teich$ spaces that $$c_X(L)\sim\frac{e^L}{L}.$$
\end{theorem}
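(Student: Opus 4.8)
This is the Prime Geodesic Theorem, and the plan is to deduce it from the Selberg trace formula together with a Tauberian argument, keeping track of which contributions are universal. First I would write $X=\mathbb H/\Gamma$ for a finitely generated Fuchsian group of the first kind and recall the bijection between closed geodesics on $X$ and $\Gamma$-conjugacy classes of hyperbolic elements, the geodesic length being the translation length; non-primitive iterates contribute only $O(e^{L/2})$, so it suffices to treat the primitive count $\pi_X(L)$. As in classical prime counting one works instead with the Chebyshev-type function $\psi_X(L)=\sum_{\ell_\gamma\le L}\ell_{\gamma_0}$, summing over all closed geodesics with $\gamma_0$ the underlying primitive, for which the goal becomes the clean statement $\psi_X(L)\sim e^L$; partial summation then upgrades this to $\pi_X(L)\sim e^L/L$, hence $c_X(L)\sim e^L/L$.

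The core is the analytic behavior of the Dirichlet-type series $D_X(s)=\sum_\gamma\ell_{\gamma_0}\,e^{-s\ell_\gamma}$, which is essentially $Z_X'/Z_X(s)$ for the Selberg zeta function $Z_X(s)=\prod_{\gamma_0}\prod_{k\ge 0}(1-e^{-(s+k)\ell_{\gamma_0}})$. Using the trace formula one shows that $Z_X$ continues meromorphically, so that $D_X$ extends past the line $\mathrm{Re}(s)=1$ with, on that line, only a simple pole at $s=1$ of residue $1$ — and this pole comes precisely from the Laplace eigenvalue $\lambda_0=0$, i.e.\ the constant eigenfunction. A Wiener--Ikehara Tauberian theorem (using a crude geometric upper bound $c_X(L)=O(e^{(1+\varepsilon)L})$ for convergence when $\mathrm{Re}(s)>1$) then yields $\psi_X(L)\sim e^L$. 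Equivalently, an explicit formula
$$\psi_X(L)=e^L+\sum_{0<\lambda_j<1/4}\frac{e^{s_jL}}{s_j}+\big(\text{lower-order oscillatory terms}\big)$$
displays the subleading corrections from the finitely many exceptional eigenvalues $\lambda_j=s_j(1-s_j)$ and from the tempered spectrum (and, when $X$ has cusps, from the scattering determinant); none of these reaches order $e^L$. The universality asserted in the theorem is exactly the statement that the main term $e^L$ depends only on $\lambda_0=0$, present on every finite-area hyperbolic surface, while $\Gamma$, the small eigenvalues, and the cusps affect only lower-order terms.

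The routine analytic inputs are the Weyl law $\#\{j:\lambda_j\le x\}\sim\frac{\mathrm{Area}(X)}{4\pi}x$ to handle the spectral side and standard growth bounds to justify the Tauberian step. I expect the non-compact finite-area case to be the main obstacle: for cocompact $\Gamma$ the argument is textbook, but Huber's statement covers all finite-area surfaces — in particular $\mathcal M_{g,n}$ with $n>0$ — where the trace formula picks up parabolic and Eisenstein contributions, the spectrum has a continuous part, and one must verify both that these never reach the leading order and that $Z_X$ still continues with the scattering determinant incorporated. An alternative, more dynamical route sidesteps the scattering terms entirely: exponential mixing of the geodesic flow on the finite-volume unit tangent bundle, combined with a thickened-orbit equidistribution argument in the style of Margulis, again gives $c_X(L)\sim e^L/L$, with the bulk of the work shifting to the mixing rate, which in constant curvature follows from the representation theory of $\mathrm{PSL}_2(\mathbb R)$.
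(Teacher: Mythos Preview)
The paper does not prove this statement at all: it is quoted as a known result of Huber (1959) in the preliminaries section and is used only through the crude consequence $c_X(L)\le c\,e^L$ for $L\ge 1$. There is therefore no ``paper's own proof'' to compare against.

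Your sketch is a correct outline of the standard Selberg-zeta/Tauberian proof of the Prime Geodesic Theorem, and your identification of the universality as coming from the bottom eigenvalue $\lambda_0=0$ is exactly right. The points you flag as obstacles (continuous spectrum and scattering terms in the noncompact finite-area case) are genuine but well understood; the argument goes through as you indicate. For the purposes of this paper, however, none of this machinery is needed: the authors only require a uniform exponential upper bound on $c_X(L)$, which follows from much softer arguments (e.g.\ a lattice-point count in $\mathbb H$), and indeed the polynomial bound on \emph{simple} closed geodesics would already suffice, as the paper itself remarks.
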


Effectively, for purposes in this paper, it would be enough if one bounds the number of simple closed geodesics of length at most $L$ on a surface, which is polynomial, see \cite{birman1985geodesics} and \cite{mirzakhani2008growth}. However, it works the same way if the greater number $c_X(L)$ is used instead. For simplicity of calculation, we suppose $c_X(L)\le ce^L$ when $L\ge 1$, for some fixed constant $c>0$.

Below we cite a few results on the Weil-Petersson gradient vector of geodesic-length functions due to Wolpert. The main tool used in the proof was the explicit Weil-Petersson geodesic length-length pairing formula given in \cite{riera2005formula}.

\begin{theorem}[Wolpert \cite{wolpert2008behavior}, \cite{wolpert2009weil}]
\label{lengthgradientshort}
Let $\gamma_i,\gamma_j$ be disjoint or identical geodesics, then $$0<\left\langle\nabla l_i,\nabla l_j\right\rangle_{\textit{WP}}-\frac{2}{\pi}l_i\delta_{ij}=O(l_i^2l_j^2),$$
where the right hand side is uniform for $l_i,l_j\le c_0$.
\end{theorem}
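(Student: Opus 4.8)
The plan is to deduce both the strict positivity and the size of the error from Riera's explicit formula for the Weil--Petersson length--length pairing, together with the collar lemma and a lattice--point estimate; this is essentially the route of \cite{wolpert2008behavior, wolpert2009weil}, with \cite{riera2005formula} as the starting point.

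Write $X=\mathbb H/\Gamma$ and let $A,B\in\Gamma$ be primitive hyperbolic elements whose axes cover $\gamma_i$ and $\gamma_j$. Riera's formula expresses
\[
\langle\nabla l_i,\nabla l_j\rangle_{WP}=\frac{2}{\pi}\Bigl(l_i\,\delta_{ij}+\sum_{C}F(u_C)\Bigr),\qquad F(u):=u\log\frac{u+1}{u-1}-2,
\]
where $C$ runs over the double cosets $\langle A\rangle\backslash\Gamma/\langle B\rangle$, omitting the trivial one when $i=j$, and $u_C=\cosh d\bigl(\operatorname{ax}A,\ C\!\cdot\!\operatorname{ax}B\bigr)$. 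Since $\gamma_i,\gamma_j$ are simple and either disjoint or equal and $\Gamma$ is torsion free, for each remaining $C$ the lifts $\operatorname{ax}A$ and $C\!\cdot\!\operatorname{ax}B$ are \emph{disjoint} geodesic lines in $\mathbb H$, so $u_C>1$ and every term is finite. Substituting $x=1/u\in(0,1)$ gives $F(u)=\tfrac2x\operatorname{arctanh}x-2=2\sum_{k\ge1}\tfrac{x^{2k}}{2k+1}$, which is positive and bounded above by $\tfrac{2}{3(u^2-1)}$. As $X$ has negative Euler characteristic the remaining double--coset set is nonempty, so $\langle\nabla l_i,\nabla l_j\rangle_{WP}-\tfrac2\pi l_i\delta_{ij}=\tfrac2\pi\sum_C F(u_C)>0$: the lower bound.

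For the upper bound I would bound the sum term by term and then count. From $F(u)\le\tfrac{2}{3(u^2-1)}$ and $u_C^2-1=\sinh^2 d_C$ (writing $d_C$ for the distance between the two axes) we get $F(u_C)\lesssim e^{-2d_C}$. The collar lemma, applicable because $l_i,l_j\le c_0$, gives embedded collars about $\gamma_i,\gamma_j$ of half--widths $w_i,w_j$ with $\sinh w_i=1/\sinh(l_i/2)$, so $e^{-w_i}\lesssim l_i$; and any shortest arc between $\operatorname{ax}A$ and $C\!\cdot\!\operatorname{ax}B$ descends to an arc on $X$ that must cross the collar of $\gamma_i$ and the disjoint collar of $\gamma_j$ when $i\ne j$ — and must cross two disjoint translates of the collar of $\gamma_i$ when $i=j$ — whence $d_C\ge w_i+w_j$, respectively $d_C\ge 2w_i$, for \emph{every} remaining $C$. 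Let $D$ denote this minimal distance, so that $e^{-2D}\lesssim l_i^2l_j^2$ in both cases. Splitting $\sum_C F(u_C)\lesssim\sum_C e^{-2d_C}$ according to the value of $\lfloor d_C-D\rfloor$, one checks that only $O(1)$ cosets have $d_C$ within a bounded distance of $D$ and that the number with $d_C\le D+k$ grows at most exponentially in $k$; since the weight $e^{-2d_C}$ decays twice as fast, the whole sum is $O(e^{-2D})=O(l_i^2l_j^2)$, uniformly for $l_i,l_j\le c_0$.

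The step I expect to be the real obstacle is the counting in the last paragraph, and in particular its uniformity when \emph{other} curves of $X$ are simultaneously short: one must ensure that neither the number of cosets at near--minimal distance nor the exponential constant in the shell count degenerates. The mechanism is that the embedded collars carried by the distinct lifts of $\gamma_j$ lying near a fixed lift of $\gamma_i$ are pairwise disjoint and of definite area, while the total area of $X$ is $2\pi\lvert\chi(X)\rvert$ by Gauss--Bonnet; combined with a thick--thin analysis this prevents such lifts from accumulating. Granting this, the positivity above and the quadratic decay of $F$ make the bound $O(l_i^2 l_j^2)$ — and its uniformity on $\{l_i,l_j\le c_0\}$ — essentially automatic.
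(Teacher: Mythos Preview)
The paper does not prove this statement: it is stated in the preliminaries as a cited result of Wolpert, with only the remark that ``the main tool used in the proof was the explicit Weil--Petersson geodesic length--length pairing formula given in \cite{riera2005formula}.'' Your proposal is precisely a sketch of Wolpert's argument along those lines --- Riera's formula for the pairing, positivity of each summand via the series expansion of $F$, the collar bound $d_C\ge w_i+w_j$ forcing $e^{-2d_C}\lesssim l_i^2 l_j^2$, and a shell count to sum the tail --- so there is nothing to compare against in this paper beyond that one-line attribution.

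Your outline is sound and you have correctly flagged the genuine content: the uniform double--coset count. One small caution in the $i=j$ case: the statement that the minimizing arc ``must cross two disjoint translates of the collar'' is not quite the right picture --- rather, for a nontrivial coset the two lifted axes carry disjoint lifted half--collars, and the common perpendicular must traverse both, giving $d_C\ge 2w_i$; phrased your way it could be misread as an argument on $X$ rather than in $\mathbb H$. Otherwise the sketch matches Wolpert's proof, and the area/Gauss--Bonnet mechanism you describe for the counting is exactly what makes the estimate uniform.
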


For the purpose of bounding the norm of vectors, we may use expressions such as $v(t)=u(t)+O(f(t))$ as $t\to t_0$, which means $\|v(t)-u(t)\|=O(f(t))$ as $t\to t_0$ when a metric or norm is specified.

\begin{theorem}[Wolpert \cite{wolpert2008behavior}, \cite{wolpert2009weil}]
\label{lengthgradientnorm}
There exists a universal constant $c>0$, such that $$\left\langle\nabla l_\gamma,\nabla l_\gamma\right\rangle_{\textit{WP}}\le c\left(l_\gamma+l_\gamma^2e^{\frac{l_\gamma}{2}}\right),$$ and $$\nabla^2l_\gamma\left(\cdot,\cdot\right)<c\left(1+l_\gamma e^{\frac{l_\gamma}{2}}\right)\left\langle\cdot,\cdot\right\rangle_{\textit{WP}},$$ for any simple closed geodesic $\gamma$, where $\nabla^2$ is the Weil-Petersson Hessian.
\end{theorem}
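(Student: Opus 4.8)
\medskip
\noindent\textit{Sketch of the intended proof.}
Both estimates are meant to be read off from explicit series for the Weil--Petersson inner products of geodesic-length gradients. The plan is to start from Riera's length--length pairing formula (the tool invoked just above): writing $X=\mathbb H/\Gamma$ and taking a hyperbolic element $A\in\Gamma$ representing $\gamma$, it expresses
$$\left\langle\nabla l_\gamma,\nabla l_\gamma\right\rangle_{\textit{WP}}=\frac{2}{\pi}\,l_\gamma+\frac{2}{\pi}\sum_{C}\Big(u_C\log\tfrac{u_C+1}{u_C-1}-2\Big),$$
where the sum runs over the nontrivial double cosets $C\in\langle A\rangle\backslash\Gamma/\langle A\rangle$ and $u_C=\cosh\rho_C\ge1$ is an explicit hyperbolic quantity recording how near the axis of $A$ comes to its $C$-translate. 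For the Hessian one would use Wolpert's companion second-variation formula for $\nabla^{2}l_\gamma$, which, contracted against a tangent vector $v$, has the shape $\tfrac{2}{\pi}l_\gamma\langle v,v\rangle_{\textit{WP}}$ plus a double-coset series whose summands are dominated by the same elementary function of $u_C$ times $\langle v,v\rangle_{\textit{WP}}$. So in both cases the task reduces to (i) estimating the summand $F(u)=u\log\tfrac{u+1}{u-1}-2$ and (ii) controlling the double cosets through the data $u_C$, uniformly over all Teichm\"uller spaces.

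For (i) one uses that $F$ is positive on $(1,\infty)$, with $F(u)\sim\log\tfrac{1}{u-1}$ as $u\to1^{+}$ and $F(u)\sim\tfrac23u^{-2}$ as $u\to\infty$; so $F(u)\le c_1\log\tfrac1{u-1}$ near $1$ and $F(u)\le c_1u^{-2}$ for large $u$, with $c_1$ universal. For (ii): since $\gamma$ is \emph{simple}, its lift $\widetilde\gamma$ is disjoint from all of its nontrivial $\Gamma$-translates, so each $\rho_C>0$, and the smallest $\rho_C$ is comparable to twice the collar half-width $w_\gamma=\arcsinh\big(1/\sinh(l_\gamma/2)\big)$; hence $u_C-1\gtrsim\min\{w_\gamma^{2},1\}$. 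For the count, a hyperbolic lattice-point argument inside one period of $\widetilde\gamma$ bounds the number of double cosets with $\rho_C\le R$ by a constant multiple of the area of the $R$-tube about $\gamma$ divided by the square of the injectivity radius along $\gamma$; away from the thin part this is of order $1+l_\gamma\sinh R$, and summing $F(u_C)$ over the shells $\rho_C\in[k,k+1]$, $k\ge0$, then gives a convergent geometric-type series $\lesssim\sum_{k\ge0}(1+l_\gamma e^{k})e^{-2k}=O(l_\gamma)$.

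Assembling the regimes gives the two bounds. When $l_\gamma$ is bounded below, $w_\gamma$ and all $\rho_C$ are bounded below and the remainder is a tame convergent sum, $O(l_\gamma^{2})$; as $l_\gamma\to0$, $w_\gamma\to\infty$ and it sharpens to $O(l_\gamma^{4})$, recovering the diagonal $i=j$ case of Theorem~\ref{lengthgradientshort}. The large-$l_\gamma$ case is where the extra growth appears: the collar is thin, $w_\gamma\asymp e^{-l_\gamma/2}$, the injectivity radius at the collar boundary is $\asymp e^{-l_\gamma/2}$ and the $\Gamma$-translates of $\widetilde\gamma$ pack densely there, so of order $l_\gamma e^{l_\gamma/2}$ double cosets have $\rho_C$ comparable to $2w_\gamma$, each with $u_C-1\asymp e^{-l_\gamma}$ and $F(u_C)\asymp l_\gamma$; a careful accounting of these near-collar terms caps the remainder at order $l_\gamma^{2}e^{l_\gamma/2}$, yielding $\left\langle\nabla l_\gamma,\nabla l_\gamma\right\rangle_{\textit{WP}}\le c\big(l_\gamma+l_\gamma^{2}e^{l_\gamma/2}\big)$. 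The same accounting applied to the second-variation series, which is one degree lower in the length scaling and hence loses one power of $l_\gamma$, gives $\nabla^{2}l_\gamma(\cdot,\cdot)\le c\big(1+l_\gamma e^{l_\gamma/2}\big)\left\langle\cdot,\cdot\right\rangle_{\textit{WP}}$; the complementary bound $\nabla^{2}l_\gamma>0$, where it is needed, is the Weil--Petersson convexity of Theorem~\ref{convex}.

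The delicate step is (ii) in the thin-collar regime: the lattice-point and collar estimates must be uniform over all Teichm\"uller spaces, and one must count the near-collar double cosets precisely enough to see they contribute no more than $l_\gamma^{2}e^{l_\gamma/2}$, while also checking that every error term is uniform for $l_\gamma\le c_0$ as the companion statement requires. This rests on Wolpert's quantitative collar lemma and his lattice-point bounds; steps (i) and the shell summation are otherwise routine.
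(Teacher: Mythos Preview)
The paper does not prove this statement at all: it is quoted in the preliminaries section as a result of Wolpert, with citations to \cite{wolpert2008behavior} and \cite{wolpert2009weil}, and is used as a black box thereafter. So there is no ``paper's own proof'' to compare your sketch against.

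That said, your outline is faithful to the argument in Wolpert's original papers: start from Riera's double-coset series for the Weil--Petersson pairing, analyze the summand $F(u)=u\log\tfrac{u+1}{u-1}-2$ in the two regimes $u\to1^{+}$ and $u\to\infty$, and then control the number and distribution of double cosets via the collar lemma and a lattice-point count. The place where your sketch is loosest is exactly where you flag it---the uniform counting of near-collar double cosets in the large-$l_\gamma$ regime---and that is the substantive analytic content of Wolpert's proofs. If you want to turn this into a self-contained argument rather than a citation, that step needs the precise quantitative form of Wolpert's lattice-point estimate, not just the heuristic $\#\{\rho_C\le R\}\lesssim 1+l_\gamma\sinh R$.
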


Let $J$ be the Weil-Petersson almost complex structure. Near the boundary of the moduli space where mutually disjoint geodesics $\beta_i$'s are short, let $\gamma_i$'s be a few geodesics disjoint from $\cup \beta_i$, such that $\{\lambda_i:=\nabla l_{\beta_i}^{\frac12},J\lambda_i,\nabla l_i\}$ is a local frame, then under this assumption there is the following.

\begin{theorem}[Wolpert \cite{wolpert2009extension}]
\label{connection}
Let $D$ be the Weil-Petersson connection, then as $l_{\beta_i}\to0$,
    \begin{align*}
        &D_{J\lambda_i}\lambda_i=\frac{3}{2\pi l_{\beta_i}^{\frac12}}J\lambda_i+O(l_{\beta_i}^{\frac32}),\\
        &D_{\lambda_j}\lambda_i=O(l_{\beta_i}^{\frac32}),\\
        &D_{J\lambda_j}\lambda_i=O(l_{\beta_i}(l_{\beta_j}^{\frac32}+l_{\beta_i}^{\frac12})), \text{when } i\neq j,\\
        &D_{\nabla l_j}\lambda_i=O(l_{\beta_i}),\\
        &D_{\lambda_j}\nabla l_i=O(l_{\beta_j}^{\frac12}),\\
        &D_{J\lambda_j}\nabla l_i=O(l_{\beta_j}^{\frac12}),\\
        &D_{\nabla l_j}\nabla l_i \text{ is continuous}.
    \end{align*}
\end{theorem}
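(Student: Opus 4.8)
The plan is to reduce everything to the Koszul formula for the Weil--Petersson Levi--Civita connection $D$ evaluated on the adapted frame $\{\lambda_i,J\lambda_i,\nabla l_j\}$, so that the computation splits into three inputs: (a) the asymptotics of the metric coefficients $\langle\,\cdot\,,\,\cdot\,\rangle_{WP}$ on pairs of frame vectors, together with their directional derivatives; (b) the Lie brackets of the frame vector fields; (c) the bookkeeping that assembles (a) and (b) and isolates the leading terms. Input (a) is essentially Theorem \ref{lengthgradientshort} and its underlying source, Riera's length--length pairing formula from \cite{riera2005formula}. Since $\lambda_i=\nabla l_{\beta_i}^{1/2}=\tfrac12 l_{\beta_i}^{-1/2}\nabla l_{\beta_i}$, that theorem gives $\langle\lambda_i,\lambda_i\rangle_{WP}=\tfrac1{2\pi}+O(l_{\beta_i}^{3})$ and $\langle\lambda_i,\lambda_j\rangle_{WP}=O(l_{\beta_i}^{3/2}l_{\beta_j}^{3/2})$ for $i\neq j$; the mixed $J$-coefficients $\langle\lambda_i,J\lambda_j\rangle_{WP}$ vanish to leading order because $\langle\nabla l_\alpha,J\nabla l_\beta\rangle_{WP}$ is (up to a constant) the Weil--Petersson Poisson bracket $\{l_\alpha,l_\beta\}$, which is $0$ for disjoint curves by Wolpert's cosine formula, and because $\langle v,Jv\rangle=0$ identically for the Kähler metric. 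Directional derivatives of these coefficients come from differentiating Riera's series, with bounds in the spirit of Theorem \ref{lengthgradientnorm}.

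For input (b) the key structural fact is that $\lambda_i$ is a gradient field while $J\lambda_i$ is, up to a universal constant, $l_{\beta_i}^{-1/2}$ times the Fenchel--Nielsen twist vector field $t_{\beta_i}=\partial/\partial\tau_i$ (Wolpert's identification of the twist flow with the Hamiltonian flow of $\tfrac12 l_{\beta_i}$, combined with the Kähler relation $\omega_{WP}=\langle J\,\cdot\,,\,\cdot\,\rangle$). The twist flows about disjoint curves commute and preserve every length $l_{\beta_j}$, so $[t_{\beta_i},t_{\beta_j}]=0$ and $t_{\beta_i}l_{\beta_j}\equiv 0$. Applying the Leibniz rule for brackets to $[J\lambda_i,\lambda_i]$ and using $\|\nabla l_{\beta_i}\|_{WP}^2=\tfrac2\pi l_{\beta_i}+O(l_{\beta_i}^4)$, the piece coming from differentiating the scalar factor $l_{\beta_i}^{-1/2}$ already produces a term proportional to $l_{\beta_i}^{-1/2}J\lambda_i$; the remaining piece is proportional to $l_{\beta_i}^{-1}\mathcal L_{t_{\beta_i}}\nabla l_{\beta_i}$ and is the genuinely subtle contribution, carrying exactly the extra leading term that turns the naive coefficient $\tfrac1{2\pi}$ into $\tfrac3{2\pi}$.

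For input (c), feed (a) and (b) into Koszul, invert the Gram matrix of the frame (which on the $\lambda$--$J\lambda$ block is $\tfrac1{2\pi}\,\mathrm{Id}$ plus a controlled perturbation, so the inversion only affects higher orders), and read off each component. In $D_{J\lambda_i}\lambda_i$ only the $J\lambda_i$-direction carries a term of order $l_{\beta_i}^{-1/2}$, and its order matches the unique singular Christoffel symbol of the cone-type model metric $\tfrac1{2\pi}dx^2+c\,x^2 d\tau^2$ with $x$ proportional to $l_{\beta_i}^{1/2}$; the precise leading constant $\tfrac3{2\pi}$ is then assembled from the $\tfrac1{2\pi}$-normalization of the frame and the refined twist--length data of the previous paragraph. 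All the remaining components $D_{\lambda_j}\lambda_i$, $D_{J\lambda_j}\lambda_i$ ($i\neq j$), $D_{\nabla l_j}\lambda_i$, $D_{\lambda_j}\nabla l_i$, $D_{J\lambda_j}\nabla l_i$ have no such resonance, and Koszul returns the listed positive powers of the relevant $l_{\beta}$'s, while for $D_{\nabla l_j}\nabla l_i$ the metric coefficients among the non-pinching gradients extend continuously to the boundary, hence so do these connection components.

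The main obstacle is to make inputs (a) and (b) precise \emph{uniformly} as several $l_{\beta}\to 0$ simultaneously: one needs the remainders in Riera's series \emph{and in its first derivatives along the frame directions} to have exactly the claimed orders, e.g.\ $O(l_{\beta_i}^{3/2})$ and $O(l_{\beta_i}(l_{\beta_j}^{3/2}+l_{\beta_i}^{1/2}))$, which amounts to estimating the contribution of the ``figure-eight'' and longer geodesics in the relevant covering-group sums and tracking how differentiation shifts their magnitude. The second delicate point, responsible for the constant $\tfrac3{2\pi}$ rather than $\tfrac1{2\pi}$, is the second-order-in-the-metric (equivalently, third-order-in-$l_{\beta_i}$) behavior hidden in $\mathcal L_{t_{\beta_i}}\nabla l_{\beta_i}$, which must be computed, not merely bounded; everything else is routine Koszul bookkeeping once these estimates are in hand.
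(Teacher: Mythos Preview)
This theorem is stated in the paper as a cited result from Wolpert \cite{wolpert2009extension} and is not proved in the paper at all; it appears in the preliminaries section alongside Theorems \ref{convex}--\ref{lengthgradientnorm} as background input. So there is no ``paper's own proof'' to compare against.

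That said, your sketch is broadly the approach Wolpert himself takes: expand the Weil--Petersson pairing via Riera's formula, use the twist--length duality $J\nabla l_\beta = 2\,t_\beta$ and the vanishing/commutation properties of twist flows along disjoint curves, and feed the resulting metric asymptotics into the Koszul formula. Your identification of the two genuinely delicate points---uniform control of the remainders in Riera's series under differentiation, and the precise computation (not just bounding) of the twist--length second-order data that produces the constant $\tfrac{3}{2\pi}$---is accurate. What you have written is a faithful outline rather than a proof; the actual work is in the error analysis of the double-coset sums, which Wolpert carries out in \cite{wolpert2008behavior} and \cite{wolpert2009extension}.
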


\section{The systole function and Akrout's theorem}
\label{Akrout}

\subsection{The systole function}
The systole function \textit{sys} assigns a surface the length of a shortest (nontrivial) closed geodesic, i.e., $$\sys(X)=\min_{\gamma\text{ closed geodesic on }X}l_\gamma(X).$$ As the function respects isometries, it descends to \textit{sys}$\colon \mathcal T\to\mathbb R_+$, as well as \textit{sys}$\colon \mathcal M\to\mathbb R_+$.
\begin{definition}
For a hyperbolic surface $X$, let $S(X)$ be the set of all the shortest closed geodesics on it.
\end{definition}

For any $\gamma\in S(X)$, note that, aside from $X$ being a three times punctured sphere, $\gamma$ has to be simple (otherwise it is possible to take a nontrivial shortcut). Therefore, it is true that $$\sys(X)=\min_{\gamma\text{ simple closed geodesic on }X}l_\gamma(X).$$

It is not difficult to see that $\sys$ is continuous but not differentiable. Examples can be constructed with, for instance, Fenchel-Nielsen coordinates.

\subsection{Topological Morse function}

To show Akrout's result on the systole function, we review some definitions.

\begin{definition}[Topological Morse function, cf. \cite{morse1959topologically}]
Let $M^n$ be an $n$-dimensional manifold and $f\colon M\to\mathbb R$ a continuous function.

(1) A point $x\in M$ is called ($C^0$-)\textit{regular} if on a $C^0$-chart around $x$, $f$ is a coordinate function, otherwise it is called ($C^0$-) critical.

(2) A \textit{critical point} $x$ is nondegenerate if on a $C^0$-chart $(x^i)$ around $x$ such that $$f-f(x)=(x^1)^2+\cdots+(x^r)^2-(x^{r+1})^2-\cdots(x^n)^2.$$ In this case the \textit{index} of $f$ at $x$ is defined to be $n-r$.

(3) A continuous function is called \textit{topologically Morse} if all the critical points are nondegenerate.
\end{definition}

\subsection{Akrout's eutacticity}
\begin{definition}
Let $\{v_i\}\subset\mathbb R^n$ be a finite set of nonzero vectors. It is called \textit{eutactic} (\textit{semi-eutactic}) if the origin is contained in the interior (boundary) of the convex hull of $\{v_i\}$, with respect to the subspace topology on $\spn\{v_i\}$. We may call it \textit{biased} if it is neither eutactic nor semi-eutactic.
\end{definition}
\begin{figure}[ht]
    \centering
    \includegraphics[width=7cm]{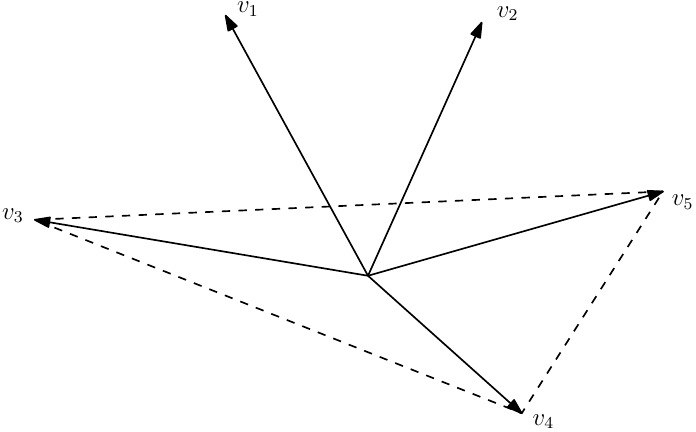}
    \caption{In this example, $\{v_1,v_2\}$ is biased, $\{v_3,v_4,v_5\}$ is eutactic, and $\{v_1,v_2,v_3,v_4,v_5\}$ is semi-eutactic}
    \label{fig:eutacticity}
\end{figure}

With a metric, we can use the following as equivalent definitions.

\begin{lemma}
Let $\{v_i\}\subset\mathbb R^n$ be a finite subset, then

$$\max_{\tau\in S^{n-1}}\min_i\{\left\langle v_i,\tau\right\rangle\}$$

is

(1) negative if and only if $\{v_i\}$ is eutactic;

(2) zero if and only if $\{v_i\}$ is semi-eutactic;

(3) positive if and only if $\{v_i\}$ is biased.
\end{lemma}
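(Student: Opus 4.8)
The plan is to prove the contrapositive-style equivalences by relating the quantity $M(\{v_i\}) := \max_{\tau \in S^{n-1}} \min_i \langle v_i, \tau\rangle$ directly to the position of the origin with respect to the convex hull $C := \mathrm{conv}\{v_i\}$. The key observation is that $M$ only depends on the directions available, and that $\min_i \langle v_i, \tau\rangle \le \langle v, \tau \rangle$ for every convex combination $v = \sum \lambda_i v_i \in C$; hence $\min_i \langle v_i, \tau\rangle \le \langle w, \tau\rangle$ for all $w \in C$, and in particular $\min_i \langle v_i, \tau \rangle < 0$ as soon as some point of $C$ lies in the open half-space $\{\langle \cdot, \tau\rangle < 0\}$. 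First I would reduce to the case $\mathrm{Span}\{v_i\} = \mathbb R^n$: since every $\tau \in S^{n-1}$ decomposes as $\tau = \tau' + \tau''$ with $\tau' \in V := \mathrm{Span}\{v_i\}$ and $\tau'' \perp V$, we have $\langle v_i, \tau\rangle = \langle v_i, \tau'\rangle$, and maximizing over $S^{n-1}$ is the same as maximizing over the unit sphere of $V$ (scaling $\tau'$ up to unit norm only helps when $\tau' \neq 0$, and $\tau' = 0$ gives value $0$ which is dominated unless $\{v_i\}$ is degenerate in a way handled separately). So I may assume the $v_i$ span.

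**The three cases.** For (1), if $\{v_i\}$ is eutactic then $0 \in \mathrm{int}\, C$, so for every $\tau \in S^{n-1}$ there is a point $w \in C$ with $\langle w, \tau\rangle < 0$ (move a little from the origin against $\tau$); by the displayed inequality $\min_i \langle v_i, \tau\rangle \le \langle w, \tau\rangle < 0$, and since $S^{n-1}$ is compact and $\tau \mapsto \min_i\langle v_i,\tau\rangle$ is continuous, the max is attained and is $< 0$. Conversely, if $0 \notin \mathrm{int}\, C$, by the separating/supporting hyperplane theorem there is a unit $\tau$ with $\langle w, \tau\rangle \ge 0$ for all $w \in C$, in particular $\langle v_i, \tau\rangle \ge 0$ for all $i$, so $\min_i \langle v_i, \tau\rangle \ge 0$ and $M \ge 0$; this is the contrapositive of "$M<0 \Rightarrow$ eutactic." For (3), if $\{v_i\}$ is biased then $0 \notin C$ (as $C$ is closed), so by strict separation there is a unit $\tau$ with $\langle v_i, \tau\rangle \ge \delta > 0$ for all $i$, giving $M \ge \delta > 0$; conversely if $M > 0$, witnessed by $\tau_0$, then $\langle v_i, \tau_0\rangle > 0$ for all $i$, so the hyperplane $\tau_0^\perp$ strictly separates $0$ from all $v_i$ and hence from $C$, so $0 \notin C$, i.e. not semi-eutactic and not eutactic, i.e. biased. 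Case (2) then follows by elimination: the three conditions (eutactic, semi-eutactic, biased) partition all finite sets, and the three values ($<0$, $=0$, $>0$) partition $\mathbb R$, and we have matched up two of the three pairs, forcing the third.

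**Main obstacle.** The only genuinely delicate point is the reduction to the spanning case together with the boundary behavior in (2): one must be careful that "interior" and "boundary" of $C$ are taken relative to $V = \mathrm{Span}\{v_i\}$, as the definition states, while the max runs over the full $S^{n-1}$. Once the orthogonal-projection reduction is in place this is harmless, but it should be spelled out that projecting $\tau$ onto $V$ and renormalizing does not decrease $\min_i \langle v_i, \tau\rangle$ except in the irrelevant case $\tau \perp V$ (where the value is $0$, matching the semi-eutactic threshold and causing no inconsistency). Everything else is a direct application of the separating hyperplane theorem plus compactness of $S^{n-1}$ and continuity of $\tau \mapsto \min_i \langle v_i, \tau \rangle$; no computation is required.
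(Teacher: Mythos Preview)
The paper does not supply a proof of this lemma; it is stated immediately after the definition as an equivalent characterization and left to the reader. Your argument via the separating/supporting hyperplane theorem together with compactness of the sphere and continuity of $\tau\mapsto\min_i\langle v_i,\tau\rangle$ is the standard route and is entirely correct once the $v_i$ span $\mathbb R^n$; it would serve perfectly well as the omitted proof.

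The one genuine wrinkle is your reduction to the spanning case, where the claim that ``projecting $\tau$ onto $V$ and renormalizing does not decrease $\min_i\langle v_i,\tau\rangle$'' is not right. Writing $f(\tau)=\min_i\langle v_i,\tau\rangle$ and $\tau'=\mathrm{proj}_V\tau\neq 0$, one has $f(\tau)=|\tau'|\,f(\tau'/|\tau'|)$, so renormalizing \emph{decreases} the value whenever $f(\tau)<0$. Concretely, take $v_1=(1,0)$, $v_2=(-1,0)$ in $\mathbb R^2$: the set is eutactic (relative to its span), yet $\tau=(0,1)$ gives $f(\tau)=0$, so the max over the full $S^1$ is $0$, not negative. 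Thus the lemma as literally stated fails for eutactic sets that do not span; one must read $S^{n-1}$ as the unit sphere in $V=\mathrm{Span}\{v_i\}$, or assume spanning. This is harmless in the paper since every application takes place in the major subspace $T_X^{\sys}\mathcal T=\mathrm{Span}\{\nabla l_i\}$, but your sentence ``the value is $0$, matching the semi-eutactic threshold and causing no inconsistency'' papers over a real inconsistency in the non-spanning eutactic case rather than resolving it.
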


\begin{definition}
Let $S(X)=\{\gamma_1,\cdots,\gamma_r\}$ be the set of all shortest geodesics on $X$, then the gradient vectors $\nabla l_1,\cdots,\nabla l_r\in T_X\mathcal T$ are called the \textit{minimal gradients} of $X$.
\end{definition}
We extend the concept of (semi-)eutacticity to hyperbolic surfaces.
\begin{definition}
\label{eutactic}
A hyperbolic surface $X\in\mathcal T$ is called \textit{eutactic (semi-eutactic, biased)} if the set of minimal gradients $\{\nabla l_i\}_{\gamma_i\in S(X)}$ is eutactic (semi-eutactic, biased) in the tangent space $T_X\mathcal T$ (equivalently $\{dl_i\}_{\gamma_i\in S(X)}$ is eutactic (semi-eutactic, biased) in the cotangent space $T^*_X\mathcal T$ by duality).
\end{definition}

\subsection{Akrout's theorem}
In \cite{akrout2003singularites}, Akrout proved the topological Morse property for the systole function.
\begin{theorem}
The systole function is topologically Morse on $\mgn$ for any $(g,n)$. $X\in\mgn$ is a critical point if and only if $X$ is eutactic, and in this case the index is equal to $\rank\{\nabla l_\gamma\}_{\gamma\in S(X)}$.
\end{theorem}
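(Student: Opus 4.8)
The plan is to work on the Teichm\"uller space $\mathcal T=\mathcal T_{g,n}$ (the statement on $\mgn$ then follows, since $\sys$ descends and the mapping class group acts properly discontinuously). Fix a point $X_0$. The first step is a \emph{local, finite reduction}: by the growth estimate $c_X(L)\le ce^{L}$ only finitely many closed geodesics on $X_0$ have length $\le\sys(X_0)+1$, and combining this with a uniform distortion bound for geodesic lengths under small Teichm\"uller displacement, one gets a neighborhood $U$ of $X_0$ on which $\sys|_U=\min_{\gamma\in S(X_0)}l_\gamma$, with moreover $S(X)\subseteq S(X_0)$ for all $X\in U$. By Theorem \ref{convex} each $l_\gamma$ is real-analytic and strictly Weil--Petersson convex, so near $X_0$ the systole is a minimum of finitely many such functions; its one-sided directional derivative at $X_0$ in a direction $v$ is $\min_{\gamma\in S(X_0)}\langle\nabla l_\gamma,v\rangle$, and on the directions killing all these functionals the behavior is governed by the strict convexity of the $l_\gamma$.

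Write $F=\{\nabla l_\gamma:\gamma\in S(X_0)\}$ and $V=\spn F$, and apply the variational characterization of (semi-)eutacticity. If $F$ is \emph{biased}, there is $\tau$ with $\langle\nabla l_\gamma,\tau\rangle>0$ for all $\gamma\in S(X_0)$; extend $-\tau$ (using continuity and finiteness) to a vector field $V$ on $U$ with $\langle\nabla l_\gamma,V\rangle<0$ for every $\gamma\in S(X_0)$. Its flow then strictly decreases every geodesic-length that can be shortest on $U$, hence decreases $\sys$ at a definite rate along flow lines, and a flow-box argument makes $\sys$ a coordinate near $X_0$: so $X_0$ is regular. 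If $F$ is \emph{semi-eutactic}, choose $\tau\in V$ with $\langle\nabla l_\gamma,\tau\rangle\ge0$ for all $\gamma$ and equality exactly on a proper nonempty subset $\Gamma_0$; the geodesics in $\Gamma_0$ have vanishing $\tau$-derivative but are strictly convex, hence are pushed strictly up as one leaves $X_0$ along $\pm\tau$ and drop out of $S$ nearby, reducing the picture along $\tau$ to the biased sub-family indexed by $S(X_0)\setminus\Gamma_0$; the same flow-box construction then shows $X_0$ is regular. Thus every non-eutactic point is a regular point.

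If $F$ is \emph{eutactic}, then $0$ is interior to the convex hull of $F$ inside $V$, so for every $v$ whose $V$-component $v_V$ is nonzero both $\min_\gamma\langle\nabla l_\gamma,v\rangle$ and $\min_\gamma\langle\nabla l_\gamma,-v\rangle$ are negative, and $\sys$ strictly decreases in both directions $\pm v$; for $v\in V^\perp$ the first derivative of every $l_\gamma$ vanishes while, being strictly Weil--Petersson convex, each $l_\gamma$ has a strict minimum at $X_0$ along the geodesic in direction $v$, so $\sys$ increases strictly in both directions $\pm v$. Hence $X_0$ is a topological critical point with a continuous splitting $T_{X_0}\mathcal T=V\oplus V^\perp$, ``down'' along $V$ and ``up'' along $V^\perp$. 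A topological Morse-lemma argument then produces a $C^0$-chart in which $\sys-\sys(X_0)$ is the sum of a function on the $V^\perp$-factor $C^0$-equivalent to $\lVert\cdot\rVert^2$ (the restriction of $\sys$ there has a strict minimum at $X_0$) and a piecewise-smooth concave function on the $V$-factor with a strict maximum at $X_0$, $C^0$-equivalent to $-\lVert\cdot\rVert^2$. This is exactly the nondegenerate normal form, with $n-\dim V$ positive and $\dim V$ negative squares, so $X_0$ is nondegenerate of Morse index $\dim V=\rank\{\nabla l_\gamma\}_{\gamma\in S(X_0)}$. Combined with the previous paragraph this yields ``critical $\iff$ eutactic'' together with the index formula.

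The soft part is the infinitesimal trichotomy above; the real work, and what I expect to be the main obstacle, is upgrading it to honest $C^0$-charts \emph{uniformly} on a neighborhood on which the finite set of competing geodesics may jump: one needs a flow-box lemma valid for the merely Lipschitz function $\sys$ along a vector field on which it is strictly monotone, and a topological splitting lemma at eutactic points identifying the ``down'' directions with a standard negative-definite quadratic. Within this, the semi-eutactic case is the subtlest, because there the chosen direction $\tau$ degenerates to first order on the subfamily $\Gamma_0$, and one must use strict Weil--Petersson convexity (Theorem \ref{convex}) in an essential way to guarantee those geodesics cannot recapture the minimum near $X_0$ and spoil monotonicity.
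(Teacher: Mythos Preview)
The paper does not supply its own proof of this statement: it is quoted as Akrout's theorem with a reference to \cite{akrout2003singularites}, and the paper explicitly remarks immediately afterward that it ``will not use Akrout's theorem on the topological Morse property for proof.'' So there is nothing in the paper to compare your argument against.

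On the substance of your outline: the local finite reduction and the biased and eutactic cases are correct and are essentially Akrout's strategy. The semi-eutactic case, which you yourself flag as the delicate one, has a real gap as written. Your assertion that the geodesics in $\Gamma_0$ ``drop out of $S$'' along $\pm\tau$ is false in the $+\tau$ direction: there the $\Gamma_0$-lengths grow only quadratically while the others grow linearly, so $S$ collapses \emph{into} $\Gamma_0$, not away from it. More seriously, the flow-box step needs a vector field on a full neighborhood of $X_0$ along which $\sys$ is strictly monotone, and the constant field $-\tau$ does not obviously do this. At nearby points $X=X_0+v$ with $S(X)\subseteq\Gamma_0$ (which do occur, e.g.\ along $+\tau$ and perturbations thereof), the one-sided derivative of $\sys$ along $-\tau$ is $\min_{\gamma\in S(X)}\langle\nabla l_\gamma(X),-\tau\rangle$, and since $\langle\nabla l_\gamma(X_0),\tau\rangle=0$ for $\gamma\in\Gamma_0$ this is governed by the off-diagonal Hessian term $-\nabla^2 l_\gamma(v,\tau)$, which has no reason to be negative for general $v$. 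Strict convexity controls $\nabla^2 l_\gamma(v,v)$, not the mixed pairing. So ``the same flow-box construction'' does not go through without a further idea; Akrout's paper supplies one, but your outline does not.
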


\begin{remark}
\label{nomorse}
    The systole function can be trivially extended to $\mgnb$. However, this will lose the topological Morse property. Therefore, $\mgn$ is maximal where the systole function is topologically Morse.
    
    Given topological Morse property, it is possible to take the gradient flow locally (given any Riemannian metric), but this may not be patched globally.

    For a Morse function on a noncompact base space, the Morse differential operator can be defined as usual. However, it is not guaranteed that it yields a chain complex.

    Therefore, the systole function does not produce a Morse theory.
\end{remark}

This paper is related to, but will not use Akrout's theorem on the topological Morse property for proof.

\section{The $\syst$ functions and fan decomposition}
\label{critptsthry}
In this section, we study the convergence of the $\syst$ functions and develop a method that we may call \textit{fan decomposition}, to study the behavior of $\syst$ later.
\begin{definition}
For a hyperbolic surface $X$, let $\syst$ be a family of averages of all geodesic-length functions for simple closed geodesics on $X$, indexed by $T>0$, defined as
$$\syst(X)=-T\log\sum_{\gamma \text{ s.c.g. on } X} e^{-\frac1Tl_\gamma(X)}.$$
\end{definition}

\begin{definition}
Let $\textit{sec.sys}(X)$ be the \textit{second systole function} on a hyperbolic surface $X$, taking the value of the length of second shortest geodesics (counted without multiplicity).
\end{definition}
Note that the second systole function is upper semi-continuous and thus bounded from above on the thick part of the moduli space.
\begin{theorem}
\label{C0}
    The function $\syst$ is well defined for $T$ sufficiently small, and converges uniformly to $\sys$ as $T\to0^+$. More specifically, $$0<\sys(X)-\syst(X)<c'T,$$ where $c'$ is a constant depending only on $(g,n)$. Moreover, $\syst\colon \mathcal M\to\mathbb R$ is at least $C^2$-continuous.
\end{theorem}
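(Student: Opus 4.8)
The plan is to establish the three claims of Theorem \ref{C0} in the order stated. First, for well-definedness and the estimate, fix $X$ in the thick part and let $r = \sys(X)$, so that at least one term $e^{-r/T}$ appears in the sum while all other terms have $l_\gamma(X) \ge \secsys(X) \ge r$. Using the bound $c_X(L) \le c e^L$ for $L \ge 1$, the tail $\sum_{l_\gamma \ge L} e^{-l_\gamma/T}$ can be dominated by a convergent integral of the form $\int_L^\infty e^{-L'/T}\, d(ce^{L'})$, which is finite precisely when $1/T > 1$, i.e.\ $T < 1$; this gives convergence of the defining series. For the quantitative estimate, write $\syst(X) = -T\log\!\big(e^{-\sys(X)/T}\,\Sigma\big)$ where $\Sigma = \sum_\gamma e^{-(l_\gamma(X) - \sys(X))/T} \ge 1$ (at least one term equals $1$), so $\syst(X) = \sys(X) - T\log\Sigma \le \sys(X)$, giving the left inequality $\sys(X) - \syst(X) > 0$. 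For the right inequality, bound $\Sigma \le N_0 + \sum_{l_\gamma \ge \secsys(X)} e^{-(l_\gamma - \sys(X))/T}$ where $N_0$ is the (uniformly bounded, over the thick part) number of shortest geodesics, and control the remaining sum by the same integral trick using $\secsys(X) - \sys(X) > 0$ and the upper bound on $\secsys$ over the thick part; this yields $\log\Sigma \le c'$ for a constant $c'(g,n)$, hence $\sys(X) - \syst(X) = T\log\Sigma < c'T$. For $X$ ranging only over the thick part (away from $\partial\mgn$), all the geometric quantities ($N_0$, an upper bound for $\secsys$, and a lower bound for $\secsys - \sys$) are uniformly controlled by Mumford compactness, so $c'$ depends only on $(g,n)$.

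For uniform convergence, note the estimate $0 < \sys - \syst < c'T$ already gives it directly on the thick part; near $\partial\mgn$ one must observe that as $\sys(X) \to 0$ the dominant term $e^{-\sys(X)/T}$ still appears, and $\sys - \syst \to 0$ uniformly because the same $\log\Sigma$ bound degrades only by the number of short geodesics, which is at most $3g-3+n$, still a function of $(g,n)$ alone. (Here one uses the collar lemma: short geodesics are simple and pairwise disjoint, so there are at most $3g-3+n$ of them, and the next-shortest geodesic crossing a collar has length bounded below.)

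For $C^2$-continuity, the strategy is to differentiate $\syst = -T\log F$ where $F(X) = \sum_\gamma e^{-l_\gamma(X)/T}$, and to justify term-by-term differentiation. Each $l_\gamma$ is real analytic on $\mathcal T$ by Theorem \ref{convex}, and Theorem \ref{lengthgradientnorm} bounds $\|\nabla l_\gamma\|_{WP}$ and $\nabla^2 l_\gamma$ polynomially in $l_\gamma$ times $e^{l_\gamma/2}$. Hence the series $\sum_\gamma \nabla(e^{-l_\gamma/T}) = \sum_\gamma -\tfrac1T e^{-l_\gamma/T}\nabla l_\gamma$ and the corresponding Hessian series are dominated, after passing to the quotient $\mathcal M$, by $\sum_\gamma (\text{poly}(l_\gamma))\, e^{-l_\gamma/T + l_\gamma/2}$, which converges locally uniformly once $1/T > 1 + 1/2$, i.e.\ for $T$ small, again using $c_X(L) \le ce^L$; local uniform convergence of the differentiated series lets us conclude $F \in C^2$, and since $F > 0$, so is $\syst = -T\log F$. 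The $C^2$ bounds for $l_\gamma$ are genuinely needed here (not just continuity of $l_\gamma$) because we are differentiating an infinite sum.

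The main obstacle is the passage near $\partial\mgn$: on $\mathcal M$ itself (an open manifold) everything is clean, but the uniformity of the constant $c'$ and the control of the differentiated series both rely on having, along any sequence $X_k$ degenerating toward the boundary, a uniform gap $\secsys(X_k) - \sys(X_k)$ bounded away from $0$ and a uniform upper bound for the Hessians of the relevant $l_\gamma$'s. The first is not automatic — one can have two systoles of nearly equal length — but what is actually needed is only that the \emph{number} of geodesics below any fixed length $L$ is bounded (which is true and $(g,n)$-dependent only, by the collar lemma and the polynomial/exponential count), so the argument should be reorganized to bound $\Sigma$ by splitting at a fixed threshold $L$ rather than at $\secsys(X)$. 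The remaining care is that Theorem \ref{lengthgradientnorm}'s Hessian bound is stated with a universal constant for all simple closed geodesics, so it applies uniformly; combined with the exponential decay $e^{-l_\gamma/T}$ dominating the exponential growth $e^{l_\gamma/2}$ of the bound for $T < 2/3$, local uniform $C^2$ convergence follows, completing the proof.
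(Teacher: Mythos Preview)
Your proposal is correct and follows essentially the same route as the paper: bin the sum by integer length intervals, control each bin via $c_X(L)\le ce^L$ to get convergence for $T<1$ and the estimate $\sys-\syst=T\log\Sigma<c'T$, then justify term-by-term differentiation using Wolpert's bounds (Theorem~\ref{lengthgradientnorm}) on $\|\nabla l_\gamma\|$ and $\nabla^2 l_\gamma$, exactly as the paper does in Lemmas~\ref{C1tail} and~\ref{C2tail}. Your discussion of uniformity near $\partial\mgn$ (splitting at a fixed threshold rather than at $\secsys(X)$, and invoking the collar lemma to bound the number of short geodesics) is in fact more careful than the paper's own treatment, which simply asserts that $r$ and $\secsys$ are bounded.
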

\begin{proof}
    Write $$\sum_{\gamma}e^{-\frac1Tl_\gamma(X)} = \sum_{n=0}\sum_{n\le l_\gamma<n+1}e^{-\frac1Tl_\gamma(X)},$$
    and by bounding the growth rate
    \begin{align*}
        \sum_{n\le l_\gamma<n+1}e^{-\frac1Tl_\gamma(X)} &\le s_X(n+1)e^{-\frac1Tn}\\
        &\le ce^{n+1}e^{-\frac1Tn}=ce^{(1-\frac1T)n+1}.
    \end{align*}
    Therefore, $\syst$ is well defined for $T<1$, and
    \begin{align*}
        \sum_{\gamma}e^{-\frac1Tl_\gamma(X)} &= re^{-\frac1T\sys(X)}+\sum_{n=\secsys(X)}\sum_{n\le l_\gamma<n+1}e^{-\frac1Tl_\gamma(X)}\\
        &\le re^{-\frac1T\sys(X)}+\sum_{n=\secsys(X)}ce^{(1-\frac1T)n+1}\\
        &= re^{-\frac1T\sys(X)}+c\frac{e^{1+\secsys(X)}}{1-e^{1-\frac1T}}e^{-\frac1T\secsys(X)},
    \end{align*}
    where $r=\#S(X)$.
    Note that $r\ge1$ is bounded as $$\sys(g,n):=\max_{X\in\mgn}\{\sys(X)\}<\infty,$$ and so is $\secsys$. Thus,
    \begin{align*}
        |\syst(X)-\sys(X)|<c'T.
    \end{align*}
    The first and second derivatives will be studied in Lemma \ref{C1tail} and Lemma \ref{C2tail}, and that will complete the proof that $\syst$ is $C^2$-continuous.
\end{proof}
\begin{lemma}
    $\syst$ is decreasing in $T$.
\end{lemma}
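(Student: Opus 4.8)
The plan is to recognise $\syst(X)$, for each fixed $X$, as minus the logarithm of an $\ell^p$-norm and then to quote the classical nesting of the spaces $\ell^p$. Setting $a_\gamma=e^{-l_\gamma(X)}$ and $p=1/T>1$ one has
$$\syst(X)=-\tfrac1p\log\sum_{\gamma}a_\gamma^{\,p}=-\log\Big(\sum_\gamma a_\gamma^{\,p}\Big)^{1/p}=-\log\|a\|_{\ell^p},$$
and the growth bound $c_X(L)\le ce^L$ used in the proof of Theorem \ref{C0} guarantees that the sequence $a=(a_\gamma)_\gamma$ lies in $\ell^p$ for every $p>1$. (For a nodal $X$ one simply adjoins $s$ further coordinates equal to $1$; the argument below is unchanged.) Since $T\mapsto p=1/T$ reverses order, it is enough to show that $p\mapsto\|a\|_{\ell^p}$ is strictly decreasing on $(1,\infty)$.

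For the latter I would run the textbook argument: given $1<p<q$, rescale so that $\|a\|_{\ell^p}=1$; then $0<a_\gamma\le1$ for every $\gamma$, whence $a_\gamma^{\,q}\le a_\gamma^{\,p}$, and summing gives $\|a\|_{\ell^q}^{\,q}\le\|a\|_{\ell^p}^{\,p}=1$, i.e. $\|a\|_{\ell^q}\le\|a\|_{\ell^p}$. Strictness is automatic here, since infinitely many of the $a_\gamma$ lie strictly between $0$ and $1$, so $a_\gamma^{\,q}<a_\gamma^{\,p}$ for those indices and the summed inequality is strict. Undoing the substitution, $T\mapsto\syst(X)=-\log\|a\|_{\ell^{1/T}}$ is strictly decreasing, which also re-proves the monotone convergence $\syst\uparrow\sys$ as $T\downarrow0$ asserted after the definition.

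If one prefers to stay in the differentiable category, the same conclusion drops out of a direct computation. With $Z(T)=\sum_\gamma e^{-l_\gamma/T}$ and Gibbs weights $\mu_\gamma=e^{-l_\gamma/T}/Z(T)$, term-by-term differentiation --- legitimate because the series and its formal $T$-derivative converge locally uniformly on $(0,1)$ by the tail estimate of Theorem \ref{C0} --- yields
$$\frac{d}{dT}\,\syst(X)=-\log Z(T)-\frac1T\sum_\gamma l_\gamma\,\mu_\gamma,$$
so the claim is equivalent to $Z(T)\ge\exp\!\big(-\tfrac1T\sum_\gamma l_\gamma\mu_\gamma\big)$. That in turn follows from
$$Z(T)\ \ge\ \frac{1}{Z(T)}\sum_\gamma e^{-2l_\gamma/T}\ =\ \sum_\gamma \mu_\gamma\,e^{-l_\gamma/T}\ \ge\ \exp\!\Big(-\tfrac1T\sum_\gamma \mu_\gamma l_\gamma\Big),$$
where the first step is the elementary inequality $Z(T)^2\ge\sum_\gamma e^{-2l_\gamma/T}$ (expand the square and drop the positive cross terms) and the last is Jensen's inequality for the convex function $x\mapsto e^{-x/T}$ against the probability vector $(\mu_\gamma)$.

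I do not foresee a genuine obstacle: both routes are short, and the only point needing a line of justification is the interchange of differentiation and summation in the second approach --- or, equivalently, the $\ell^p$-summability invoked in the first --- and this is already supplied by the geometric tail bound established in the proof of Theorem \ref{C0}.
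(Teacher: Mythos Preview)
Your proposal is correct. The $\ell^p$-norm route is a genuinely different and elegant packaging: once one writes $\syst(X)=-\log\|a\|_{\ell^{1/T}}$, the result is just the nesting $\ell^p\hookrightarrow\ell^q$ for $p<q$, and strictness comes for free since infinitely many coordinates lie in $(0,1)$. This has the advantage of being a one-line citation of a textbook fact and of giving the limit $\syst\uparrow\sys$ immediately via $\|a\|_{\ell^p}\to\|a\|_{\ell^\infty}$.

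Your second route begins exactly as the paper does --- differentiate $-T\log Z(T)$ and arrive at $-\log Z-\tfrac1T\sum_\gamma l_\gamma\mu_\gamma$ --- but then works harder than necessary. You invoke the quadratic bound $Z^2\ge\sum e^{-2l_\gamma/T}$ together with Jensen, whereas the paper simply rewrites the derivative as
\[
-\frac{1}{Z}\sum_\gamma e^{-l_\gamma/T}\Bigl(\log Z-\bigl(-\tfrac{l_\gamma}{T}\bigr)\Bigr)
\]
and observes that each bracket is positive because $e^{-l_\gamma/T}<Z$ trivially. So the paper's argument is the more elementary of the two differentiable versions; your Jensen step is valid but superfluous.
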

\begin{proof}
Note that
    \begin{align*}
        \frac{d}{dT}\syst&=-\log\left(\sum_\gamma e^{-\frac1Tl_\gamma}\right)-\frac1T\frac{\sum_\gamma le^{-\frac1Tl_\gamma}}{\sum_\gamma e^{-\frac1Tl_\gamma}}\\
        &=-\frac{1}{\sum_\gamma e^{-\frac1Tl_\gamma}}\left(\sum_\gamma e^{-\frac1Tl_\gamma}\log\left(\sum_{\gamma'} e^{-\frac1Tl}\right)-\sum_\gamma\left(-\frac1Tl_\gamma\right)e^{-\frac1Tl_\gamma}\right)\\
        &=-\frac{1}{\sum_\gamma e^{-\frac1Tl_\gamma}}\left(\sum_\gamma e^{-\frac1Tl_\gamma}\left(\log\left(\sum_{\gamma'} e^{-\frac1Tl_{\gamma'}}\right)-\left(-\frac1Tl_\gamma\right)\right)\right)<0.
    \end{align*}
\end{proof}

Given $T$, the first derivatives can be calculated directly using the chain rule $$\nabla\syst(X)=\frac{\sum_\gamma e^{-\frac1Tl_\gamma(X)}\nabla l_\gamma(X)}{\sum_\gamma e^{-\frac1Tl_\gamma(X)}}.$$

Therefore, to study the critical points of $\syst$, it suffices to study $$\sum_\gamma e^{-\frac1T(l_\gamma(X)-\sys(X_0))}\nabla\syst(X)=\sum_\gamma e^{-\frac1T(l_\gamma(X)-\sys(X_0))}\nabla l_\gamma(X)$$ instead where $X_0$ is a point fixed.
\begin{definition}
\label{setup}
(1) Let $p$ be a critical point of the systole function, and let $$\widetilde\Omega_T(X): =\sum_\gamma e^{-\frac1T(l_\gamma(X)-\sys(p))}\nabla l_\gamma(X),$$ which is a rescaling of $\nabla\syst(X)$. We leave out $p$ in the definition of $\widetilde\Omega$ for simplicity, as there are only finitely many such critical points and the context will be clear when $\widetilde\Omega$ is used.

(2) Let $u\colon (-a,a)\to\mathcal T$ be a unit speed geodesic with $u(0)=p$ and the tangent vector $\tau:=u'(0)$. We let $$\widetilde\Omega_T(v)=\widetilde\Omega_T(t,\tau)=\widetilde\Omega_T(u(t))$$ by abuse of notation, where $X=u(t)$ and $v=t\tau$, where we omit the exponential map $\ex$ at $p$.

(3) Let $\Omega_T$ be the `main part' of $\widetilde\Omega_T$, where we replace the sum $\sum_\gamma$, over all simple closed geodesics, by $\sum_{\gamma\in S(p)}$ over all shortest geodesics. In the rest of the paper, we treat other maps like $\Phi_T$ and $\Psi_T$ in the same manner. We may also use a lowercase superscript $i$ for the $i$-th component of a sum, for example, $$\Omega^i(X)=e^{-\frac1T(l_i(X)-\sys(p))}\nabla l_i(X),$$ and an uppercase superscript $J$ for the partial sum over $J$, for example, $$\Omega^J(X)=\sum_{j\in J} e^{-\frac1T(l_j(X)-\sys(p))}\nabla l_j(X).$$

(4) Let $\pi\colon \mathbb R^n\setminus\{O\}\to S^{n-1}$ be the standard projection onto the unit sphere.

\begin{remark}
Note that if we choose an orthonormal basis for $T_p\mathcal T$, then by composing the exponential map $\ex_p$, we get a normal coordinate system locally near $p$. Therefore, we can see $\widetilde\Omega_T$ as a vector field on $\mathcal T$ near $p$ or $T_p\mathcal T$ near 0. We will recall this later. We will focus on the renormalized gradient vector $\widetilde\Omega_T$.
\end{remark}

\begin{lemma}
\label{C1tail}
When $T<1$, $$\norm*{\widetilde\Omega_T(X)-\Omega_T(X)}<c''e^{-\frac1T\secsys(X)},$$ where $c''$ is a constant depending on $(g,n)$.
\end{lemma}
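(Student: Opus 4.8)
The plan is to bound the difference directly as a tail sum. By the definitions in \ref{setup}, $\widetilde\Omega_T(X)-\Omega_T(X)=\sum_{\gamma\notin S(p)}e^{-\frac1T(l_\gamma(X)-\sys(p))}\nabla l_\gamma(X)$, the sum running over all simple closed geodesics on $X$ that are not among the shortest geodesics of $p$. So by the triangle inequality it suffices to bound $\sum_{\gamma\notin S(p)}e^{-\frac1T(l_\gamma(X)-\sys(p))}\norm*{\nabla l_\gamma(X)}_{WP}$, and the whole argument is a weighted length-spectrum estimate of exactly the same flavor as the one in the proof of Theorem \ref{C0}.

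I would use two inputs. First, for $X$ in a small enough neighborhood of $p$ one has $S(X)\subseteq S(p)$ (a standard consequence of the discreteness of the length spectrum together with continuity of geodesic-length functions), so every $\gamma$ occurring in the tail fails to be a shortest geodesic of $X$ and hence satisfies $l_\gamma(X)\ge\secsys(X)$. Second, Theorem \ref{lengthgradientnorm} gives a uniform sub-exponential bound $\norm*{\nabla l_\gamma(X)}_{WP}\le C(1+l_\gamma)e^{l_\gamma/4}\le C'e^{l_\gamma/2}$ valid for every simple closed geodesic, so the gradient norms grow much more slowly than the weights decay. Grouping the tail geodesics by integer length and using Huber's bound $c_X(n+1)\le ce^{n+1}$, the sum is dominated by a series of the shape $\sum_{n\ge\secsys(X)}c e^{n+1}\cdot e^{-n/T+\sys(p)/T}\cdot C'e^{(n+1)/2}$, which is geometric with ratio $e^{3/2-1/T}$; for $T$ small it converges, and factoring out the leading term pulls out the exponential $e^{-\frac1T\secsys(X)}$, the remaining prefactors being bounded by a constant depending only on $(g,n)$ on the relevant neighborhood of $p$.

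The main obstacle is purely the bookkeeping of this geometric series: one must check that the exponential decay $e^{-l_\gamma/T}$ genuinely beats the product of the length-spectrum count and the gradient-norm growth — which is what constrains how small $T$ must be — and then extract the clean factor $e^{-\frac1T\secsys(X)}$ with a constant independent of $X$ and $T$, absorbing the remaining $X$-dependence through the fact that $\secsys$ is bounded (and bounded away from $\sys(p)$) on a fixed neighborhood of $p$. None of this is deep; everything beyond it is the triangle inequality together with the cited theorems of Wolpert and Huber.
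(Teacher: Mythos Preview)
Your proposal is correct and follows essentially the same route as the paper's own proof: both bound the tail sum term-by-term via the triangle inequality, group by integer length, apply Huber's count $c_X(n+1)\le ce^{n+1}$ together with Wolpert's gradient bound from Theorem~\ref{lengthgradientnorm}, sum the resulting geometric series, and factor out $e^{-\frac1T\secsys(X)}$. Your explicit remark that $S(X)\subseteq S(p)$ on a small neighborhood (to justify $l_\gamma(X)\ge\secsys(X)$ for $\gamma\notin S(p)$) is a point the paper leaves implicit, and your observation that convergence of the series requires $T$ below a threshold strictly smaller than $1$ is accurate---the paper's stated hypothesis $T<1$ is slightly loose, since its own ratio $e^{\frac94-\frac1T}$ needs $T<\tfrac49$.
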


\begin{proof}
We have the growth rate of the first derivative
\begin{align*}
    \frac{1}{e^{-\frac1T\syst}}\norm*{\sum_{n\le l_\gamma<n+1}e^{-\frac1Tl_\gamma(X)}\nabla l_\gamma}
    &\le \frac{1}{e^{-\frac1T(\sys+c'T)}}s_X(n+1)e^{-\frac1Tn}\norm*{\nabla l_\gamma}\\
    &\le Cce^{n+1}e^{-\frac1Tn(c(n+1+(n+1)^2e^{\frac{n+1}{2}}))^{\frac12}}\\
    &\le 2Cc^{\frac32}e^{(\frac54-\frac1T)n+\frac54}.
\end{align*}
Therefore, $\syst$ is at least $C^1$. For the approximation with $\Omega_T$, with the help of Theorem \ref{lengthgradientnorm}, we have
\begin{align*}
    \norm*{\widetilde\Omega_T(X)-\Omega_T(X)} &\le \sum_{\gamma\not\in S(X)} e^{-\frac1Tl_\gamma(X)}\norm*{\nabla l_\gamma(X)}\\
    &= \sum_{n=\secsys(X)}\sum_{n\le l_\gamma<n+1}e^{-\frac1Tl_\gamma(X)}\|\nabla l_\gamma(X)\|\\
    &\le \sum_{n=\secsys(X)}2c\sqrt{c_1}e^{n+1}e^{-\frac1Tl_\gamma(X)}e^{1+\frac{n+1}{4}}\\
    &\le 2c\sqrt{c_1} \frac{e^{\frac54+\frac94\secsys(X)}}{1-e^{\frac94-\frac1T}}e^{-\frac1T\secsys(X)}\\
    &\le c''e^{-\frac1T\secsys(X)}.
\end{align*}

The lemma follows.
\end{proof}
\end{definition}
\begin{definition}
\label{space}
Let $X$ be a point in the $\teich$ space, and $S(X)=\{\gamma_1,\cdots,\gamma_r\}$ the set of all the shortest geodesics on it. Define the subspaces of the tangent space $T_X\mathcal T$:
$$\text{the\ } \textit{major subspace}: T_X^{\sys}\mathcal T=\spn\{\nabla l_1,\cdots,\nabla l_r\},$$
$$\text{and\ the\ } \textit{minor subspace}: T_X^{\sys\perp}\mathcal T=(T_X^{\sys}\mathcal T)^\perp.$$ The notions are used mainly when $X$ is eutactic.
\end{definition}
\begin{definition}[Fan decomposition of the major subspace $T_p^{\sys}\mathcal T$]
\noindent

Let $p\in\mathcal M$ be a critical point for the systole function, and $S(p)=\{\gamma_i\}_{i\in I}$ where $I=\{1,\cdots,r\}$ is the index set, then the minimal gradient set $\{\nabla l_i\}_{i\in I}$ satisfies the eutactic condition at $p$, say $\sum_{i=1}^r a_i\nabla l_i(p)=0$ for positive $a_i$'s. Let $J$ be a subset of $I$, and we define $F_J\subset T_p^{\sys}\mathcal T$ to be the region consisting of vectors $v$ such that $J$ is the set of all indices $j$ that maximize $\left\langle \nabla l_j,v\right\rangle$. On the other hand, for any $\tau$, it assigns a multi-index $J=J(\tau)$ by inclusion $\tau\in F_J$. We extend the definition of $J(\cdot)$ to $\left(T_X^{\sys\perp}\mathcal T\right)^c$, the complement of the major subspace, by precomposing the projection, $J=J\circ\proj_{T_p^{\sys}\mathcal T}$, by abuse of notation. Let $v_J=\pi\left(\sum_{j\in J}\nabla l_j\right)$ be the unitized average.
\end{definition}
\begin{figure}[ht]
    \centering
    \includegraphics[width=7cm]{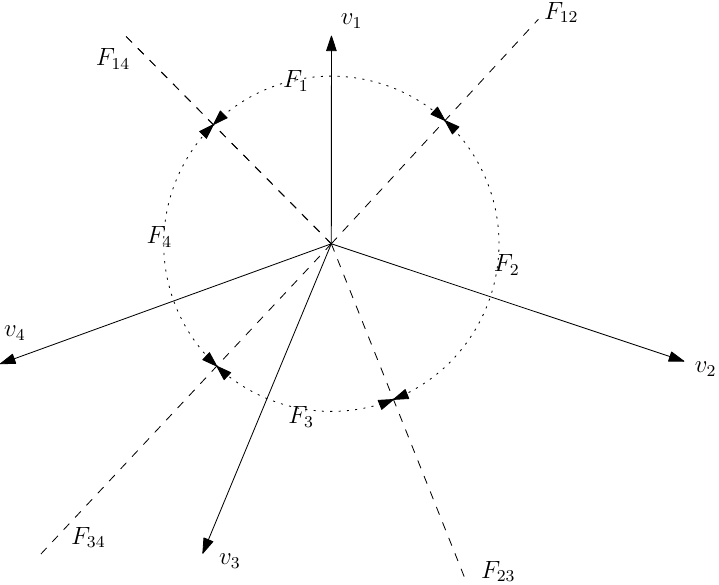}
    \caption{Fan decomposition about $\{v_1,v_2,v_3,v_4\}\subset\mathbb R^2$}
    \label{fig:fan decomposition}
\end{figure}
The fan decomposition concerns the variation of the length of the shortest geodesics at $p$ when the base point is perturbed. Fix the direction $\tau$ of a small perturbation, the geodesics indexed by $J(\tau)$ will be longer than others. If we perturb the base point in the opposite direction $-\tau$, then the geodesics indexed by $J$ will be shorter than others and are the only candidates for the shortest geodesics on $u(-t\tau)$, that is, $S(u(-t\tau))\subset\{\gamma_j\}_J$.
\begin{lemma}
\label{fan}
The following is true:

(1) $\bigcup_J F_J = T_p^{\sys}\mathcal T$.

(2) Any $F_J$ is a polygonal cone properly contained in some half space of the major subspace and $\nabla l_j\in\mathbb H(\tau)$ for any $\tau$ and all $j\in J(\tau)$, and therefore $v_J$ is well defined.

(3) $F_J\subset\mathbb H(v_J)$, equivalently, $\left\langle v_J,\tau\right\rangle>0$ if $\tau\in F_J$.

(4) There exists $D>0$ such that $|\left\langle\nabla l_j,\tau\right\rangle| > D$ for any $\tau$ unit and all $j\in J(\tau)$, i.e., $$\max_{\tau;j\in J(\tau)}\left\langle\nabla l_j,\tau\right\rangle > D \text{ and } \min_{\tau;j\in J(\tau)}\left\langle\nabla l_j,\tau\right\rangle < -D.$$

(5) The inner products $\left\langle\nabla l_j,\tau\right\rangle$ cannot be all equal. Specifically, $$\{i:\left\langle\nabla l_i,\tau\right\rangle\ge 0\}\neq\emptyset \text{ and } \{i:\left\langle\nabla l_i,\tau\right\rangle< 0\}\neq\emptyset.$$
\end{lemma}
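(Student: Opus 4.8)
The plan is to unwind the definition of $F_J$ and the eutacticity relation $\sum_{i\in I}a_i\nabla l_i(p)=0$ with all $a_i>0$, and to treat each of the five items essentially as a piece of finite-dimensional convex geometry in the major subspace $V:=T_p^{\sys}\mathcal T$ equipped with the Weil--Petersson inner product. Write $\mu(\tau):=\max_{i\in I}\langle\nabla l_i,\tau\rangle$; then $F_J=\{v\in V: \{i:\langle\nabla l_i,v\rangle=\mu(v)\}=J\}$, so item (1) is immediate since every $v\in V$ has a nonempty argmax set, giving $\bigcup_J F_J=V$. For the structural claims it is cleanest to first record (5): if some unit $\tau\in V$ had $\langle\nabla l_i,\tau\rangle\ge 0$ for all $i$, then $0=\langle\sum a_i\nabla l_i,\tau\rangle=\sum a_i\langle\nabla l_i,\tau\rangle$ with every term $\ge 0$ and every $a_i>0$ forces $\langle\nabla l_i,\tau\rangle=0$ for all $i$, i.e.\ $\tau\perp V$, contradicting $\tau\in V$ a unit vector; the symmetric argument (apply this to $-\tau$, or directly) handles the strict-negative set. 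The same computation shows the inner products cannot all be equal, since if $\langle\nabla l_i,\tau\rangle\equiv c$ then $0=c\sum a_i$ gives $c=0$, again forcing $\tau\perp V$.

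Next I would prove (2) and (3) together. Since $\{i:\langle\nabla l_i,\tau\rangle<0\}\neq\emptyset$ by (5), we get $\mu(\tau)>\min_i\langle\nabla l_i,\tau\rangle$, hence for $\tau\in F_{J(\tau)}$ and $j\in J(\tau)$ we have $\langle\nabla l_j,\tau\rangle=\mu(\tau)>0$ — wait, more carefully: $\mu(\tau)$ could a priori be $0$ or negative if \emph{all} inner products were $\le 0$; but (5) guarantees $\{i:\langle\nabla l_i,\tau\rangle\ge 0\}\neq\emptyset$, and combined with the previous sentence actually some index has strictly positive pairing \emph{unless} the max is exactly $0$. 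To get the strict statement I note that $\mu(\tau)=0$ would mean $\langle\nabla l_i,\tau\rangle\le 0$ for all $i$ with equality somewhere, again contradicting the weighted-sum-zero relation the same way. Hence $\langle\nabla l_j,\tau\rangle=\mu(\tau)>0$ for $j\in J(\tau)$, so $\nabla l_j\in\mathbb H(\tau)$ (the open half-space $\{w:\langle w,\tau\rangle>0\}$), which makes $\sum_{j\in J}\nabla l_j\neq 0$ and $v_J=\pi(\sum_{j\in J}\nabla l_j)$ well-defined. For the cone structure: $F_J=\{v:\langle\nabla l_j-\nabla l_k,v\rangle=0\ \forall j,k\in J,\ \langle\nabla l_j-\nabla l_i,v\rangle>0\ \forall j\in J, i\notin J\}$ is an intersection of finitely many hyperplanes and open half-spaces, hence a relatively open polyhedral cone; it lies in $\mathbb H(\tau_0)$ for any fixed interior direction $\tau_0\in F_J$ because... actually the cleanest route for ``properly contained in a half-space'' is just to exhibit $v_J$: for $\tau\in F_J$, $\langle v_J,\tau\rangle$ is a positive multiple of $\sum_{j\in J}\langle\nabla l_j,\tau\rangle=|J|\,\mu(\tau)>0$, which is exactly claim (3), and $(3)$ immediately gives that $F_J$ sits in the open half-space $\mathbb H(v_J)$, in particular proper containment in a half-space.

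Finally (4) is a compactness argument. The set $S^{n-1}\cap V$ is compact, $I$ is finite, and for each fixed $i$ the function $\tau\mapsto\langle\nabla l_i,\tau\rangle$ is continuous; the set $K_i:=\{\tau\in S^{n-1}\cap V: i\in J(\tau)\}$ is closed (it is cut out by the closed conditions $\langle\nabla l_i,\tau\rangle\ge\langle\nabla l_k,\tau\rangle$ for all $k$, intersected with the sphere), and on $K_i$ we have just shown $\langle\nabla l_i,\tau\rangle=\mu(\tau)>0$, so by compactness $\langle\nabla l_i,\tau\rangle\ge D_i>0$ on $K_i$; take $D:=\min_i D_i/2>0$ (the finite minimum over the finitely many $i$ with $K_i\neq\emptyset$, noting every index appears in some $J(\tau)$ — e.g.\ for $\tau$ near the direction of $\nabla l_i$ after projecting). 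The symmetric lower bound $\langle\nabla l_j,\tau\rangle<-D$ on the set where $j$ realizes the \emph{minimum} follows by the same argument applied to $-\tau$ together with (5). I expect the only genuinely delicate point to be getting the \emph{strict} positivity $\mu(\tau)>0$ cleanly (as opposed to $\mu(\tau)\ge 0$), since this is what distinguishes the eutactic case from the merely semi-eutactic one and is the place where the hypothesis $a_i>0$ for \emph{all} $i$ (not just $\ge 0$) is really used; once that is nailed down, items (1)--(5) are routine polyhedral geometry plus one compactness step.
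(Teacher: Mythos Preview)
Your proposal is correct and follows essentially the same route as the paper's proof: (1) is tautological; the eutactic relation $\sum a_i\nabla l_i=0$ with all $a_i>0$ forces $\mu(\tau)>0$ for every nonzero $\tau\in V$, from which (2), (3), (5) fall out; and (4) is a finiteness/compactness step. Your write-up is considerably more detailed than the paper's three-line sketch, but the underlying ideas coincide.

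One small correction: your parenthetical claim that ``every index appears in some $J(\tau)$'' is not true in general (a vector $\nabla l_i$ of small norm relative to the others may never realize the maximum), but this is harmless since you already correctly take the minimum of the $D_i$ only over those $i$ with $K_i\neq\emptyset$.
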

\begin{proof}
(1) is tautological. For any $0\neq v\in T_p^{\sys}\mathcal T$, by the eutactic condition \ref{eutactic}, there exists at least one $i$ such that $\left\langle\nabla l_i,v\right\rangle>0$, otherwise, $v$ would live in the minor subspace $T_p^{\sys\perp}\mathcal T$, from which (2) and (5) follows. (3) follows directly from (2). (4) follows from the finiteness of $\{F_J\}$ and the first two propositions.
\end{proof}
\begin{remark}
If $q\in\mathcal T$ is semi-eutactic, we may take the fan decomposition of the subspace spanned by the maximal eutactic subset of all minimal gradients in a similar way, or a degenerated fan decomposition on the subspace spanned by all minimal gradients. In the latter case, we can similarly establish an inequality resembling (4) above:

There exists $D>0$ such that $$\max\left\langle\nabla l_j,\tau\right\rangle > D \text{ or } \min\left\langle\nabla l_j,\tau\right\rangle < -D$$ for any $\tau\in T^{\sys}\mathcal T$ unit.
\end{remark}
\begin{remark}
Property (4) in Lemma \ref{fan} and the remark above still hold (with possibly smaller $D$), if we allow $\tau$ to be any vector in $T_*\mathcal T$ with $\angle(\tau,T_*^{\sys\perp}\mathcal T)>\theta$ for fixed $\theta>0$.
\end{remark}

\section{Behavior of $\syst$ in $\mgn$}
We review a classic exercise in lesson one in algebraic topology.
\begin{lemma}
\label{degree}
Let $f,g\colon S^n\to S^n$ be two self maps where $f(x)+g(x)\neq0$, then $\deg f=\deg g$. Specifically, if $g=\textit{id}$, then $\deg f=1$.
\end{lemma}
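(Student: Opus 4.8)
The plan is to use the standard straight-line (convex combination) homotopy, renormalized back onto the sphere. Viewing $S^n$ as the unit sphere in $\mathbb R^{n+1}$, I would set
$$H(x,t)=\frac{(1-t)f(x)+t\,g(x)}{\lvert (1-t)f(x)+t\,g(x)\rvert},\qquad x\in S^n,\ t\in[0,1],$$
and first verify that the denominator never vanishes, so that $H$ is a well-defined continuous map $S^n\times[0,1]\to S^n$ with $H(\cdot,0)=f$ and $H(\cdot,1)=g$. Homotopy invariance of the Brouwer degree then gives $\deg f=\deg g$ at once.

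The only step that needs an argument — and it is a very short one — is the nonvanishing of $(1-t)f(x)+t\,g(x)$. At $t=0$ it equals $f(x)$ and at $t=1$ it equals $g(x)$, each of norm $1$. For $t\in(0,1)$, if $(1-t)f(x)+t\,g(x)=0$ then $f(x)=-\tfrac{t}{1-t}\,g(x)$; taking norms forces $\tfrac{t}{1-t}=1$, hence $t=\tfrac12$ and $f(x)=-g(x)$, i.e.\ $f(x)+g(x)=0$, contradicting the hypothesis. Thus $H$ is a genuine homotopy from $f$ to $g$, and $\deg f=\deg g$.

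For the final clause one simply specializes to $g=\textit{id}$: the hypothesis becomes $f(x)\neq -x$ for all $x$, and the above yields $\deg f=\deg(\textit{id})=1$. I do not anticipate any real obstacle; the one point to be attentive to is that a convex combination of two unit vectors passes through the origin exactly when they are antipodal, which is precisely the case excluded by $f(x)+g(x)\neq 0$, so the normalization in the definition of $H$ is legitimate for all $t\in[0,1]$.
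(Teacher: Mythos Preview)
Your proof is correct and follows exactly the paper's approach: the paper's one-line proof simply writes down the same normalized straight-line homotopy $H(x,t)=\frac{(1-t)f(x)+tg(x)}{\|(1-t)f(x)+tg(x)\|}$. Your added verification that the denominator never vanishes is a welcome elaboration of what the paper leaves implicit.
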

\begin{proof}
$H(x,t)=\frac{(1-t)f(x)+tg(x)}{\|(1-t)f(x)+tg(x)\|}$ is a homotopy from $f$ to $g$.
\end{proof}
\subsection{In the major subspace}
\label{behaviorinmajor}
This subsection provides insight of the behavior of $\syst$ (though not complete), in a simplified setting, to help one better understand, while very little from this subsection will be directly used later.

We take the following approximation of $\Omega_T$: $$\Phi_T(t,\tau)=\sum_i e^{-\frac tT\left\langle\nabla l_i(p),\tau\right\rangle}\nabla l_i(p).$$
Note that rigorously we are considering the vector field $\ex_p^*\Omega_T$ on the tangent bundle $T(T_p\mathcal T)\cong T_p\mathcal T$, as $\ex_p$ is locally an isomorphism but we will continue to use $\Omega_T$ for simplicity by abuse of notation (we will do the same thing a few times in this paper), and the approximation $\Phi_T$ is done in the same space with the help of normal coordinates.
\begin{lemma}
\label{existence1}
Under the notation in Definition \ref{setup} and Definition \ref{space}, $\Phi_T(\cdot,\cdot)$ restricted as a map: $T_p^{\sys}\mathcal T\to T_p^{\sys}\mathcal T$ has a zero.
\end{lemma}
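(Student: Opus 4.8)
The plan is to notice that $\Phi_T$, restricted to the major subspace, is a \emph{gradient} vector field, and then to produce its zero by a soft variational argument. Write $w_i:=\nabla l_i(p)\in T_p^{\sys}\mathcal T$ for $i\in I=\{1,\dots,r\}$ (so $S(p)=\{\gamma_i\}_{i\in I}$), and, as in Definition \ref{setup}(2), identify a point of the major subspace with $v=t\tau$. First I would introduce the real-analytic function
$$F(v)=-T\sum_{i\in I}e^{-\frac1T\langle w_i,v\rangle}\qquad(v\in T_p^{\sys}\mathcal T),$$
and check by the chain rule that its gradient \emph{inside} $T_p^{\sys}\mathcal T$ equals $\sum_{i\in I}e^{-\frac1T\langle w_i,v\rangle}w_i=\Phi_T(v)$; no projection is lost here precisely because each $w_i$ already lies in $T_p^{\sys}\mathcal T=\spn\{w_i\}$. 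Hence zeros of $\Phi_T|_{T_p^{\sys}\mathcal T}$ are exactly the critical points of $F$. I would also record that $F$ is strictly concave: each summand $-e^{-\frac1T\langle w_i,v\rangle}$ has Hessian a nonpositive multiple of $v\mapsto\langle w_i,v\rangle^2$, and since $\{w_i\}_{i\in I}$ spans $T_p^{\sys}\mathcal T$ the total Hessian is negative definite. So it suffices to show $F$ has a critical point, and that point will automatically be the unique global maximum.

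The second step is to show $F$ is coercive, i.e.\ $F(v)\to-\infty$ as $\|v\|\to\infty$. Since $p$ is a critical point of the systole it is eutactic (Definition \ref{eutactic}), equivalently $\max_{\|\tau\|=1}\min_{i}\langle w_i,\tau\rangle<0$ over the unit sphere of $T_p^{\sys}\mathcal T$. That sphere is compact and $\tau\mapsto\min_i\langle w_i,\tau\rangle$ is continuous, so there is a constant $D>0$ with $\min_i\langle w_i,\tau\rangle\le-D$ for every unit $\tau$ — this is the uniform form of Lemma \ref{fan}(4)--(5). Then, keeping only the term with the most negative exponent, for $v=t\tau$ with $t=\|v\|\ge0$ one gets
$$F(v)=-T\sum_{i\in I}e^{-\frac tT\langle w_i,\tau\rangle}\le -T\,e^{\frac{D\|v\|}{T}}\to-\infty\quad\text{as }\|v\|\to\infty,$$
uniformly in the direction $\tau$. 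Consequently $F$ is a continuous coercive function on the finite-dimensional space $T_p^{\sys}\mathcal T$, so it attains its maximum at some (necessarily interior) point $v^\ast$, where $\Phi_T(v^\ast)=\nabla F(v^\ast)=0$. This proves the lemma, and strict concavity from the first step shows in addition that the zero is unique — a small bonus not asked for but useful later.

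An alternative that stays closer to Lemma \ref{degree}, worth mentioning: on a sphere $S$ of sufficiently large radius, the sum defining $\Phi_T(v)$ is dominated by the indices realizing $\min_i\langle w_i,v/\|v\|\rangle<0$, so $\langle\Phi_T(v),v\rangle<0$ on $S$; hence $\pi\circ\Phi_T|_S$ and the antipodal map never point in opposite directions, so they are homotopic, $\pi\circ\Phi_T|_S$ has nonzero degree, and therefore $\Phi_T$ cannot be nonvanishing on the ball bounded by $S$. I do not expect a genuine obstacle in either route: the content is entirely formal once the potential $F$ (or, dually, the sign of $\langle\Phi_T(v),v\rangle$ at infinity) is identified. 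The single point that deserves a sentence is the upgrade from the pointwise statement ``in every direction some minimal gradient has negative inner product with $\tau$'' (which is just eutacticity, resp.\ Lemma \ref{fan}(5)) to the \emph{uniform} constant $D$ driving coercivity, and that is immediate from compactness of the unit sphere of the major subspace.
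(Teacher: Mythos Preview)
Your proof is correct, but it follows a genuinely different route from the paper. The paper argues topologically: it shows that on a sphere $S^{d-1}_{\rho T}$ of radius $\rho T$ with $\rho$ large, the normalized vector $\pi\circ\Phi_T(\cdot,-\cdot)$ is close to $v_{J(\tau)}$ direction by direction, hence makes an acute angle with $\tau$, so by Lemma~\ref{degree} the map has degree $(-1)^d$ and a zero exists in the ball. You instead recognize $\Phi_T|_{T_p^{\sys}\mathcal T}$ as the gradient of the smooth potential $F(v)=-T\sum_i e^{-\frac1T\langle w_i,v\rangle}$, observe that eutacticity forces $F\to-\infty$ at infinity, and conclude by compactness that $F$ attains a maximum. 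Your approach is shorter and, via strict concavity, delivers uniqueness for free (which the paper proves separately in Lemma~\ref{linearity}). The trade-off is that the paper's degree-theoretic method is the one that generalizes: in Theorem~\ref{existence2} the full map $\Psi_T$ on all of $T_p\mathcal T$ has no clean scalar potential, and the argument there relies on reproducing the degree computation in the presence of the second-order terms. Your closing paragraph sketching a degree argument based on the sign of $\langle\Phi_T(v),v\rangle$ is essentially the paper's method.
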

\begin{proof}
Consider $\Phi_T(t,-\tau)$ restricted to the $\rho T$-sphere $S^{d-1}_{\rho T}\subset T_p^{\sys}\mathcal T$, where $d=\dim T_p^{\sys}\mathcal T$ is the dimension of the major subspace and $\rho>0$ is a constant depending only on $p$ and will be determined later.

For any $\tau\in T_p^{\sys}\mathcal T$, fix a $j\in J(\tau)$, we then have
\begin{align*}
    \Phi_T(t,-\tau) &= \sum_i e^{\frac tT\left\langle\nabla l_i(p),\tau\right\rangle}\nabla l_i(p)\\
    &= e^{\frac tT\left\langle\nabla l_j(p),\tau\right\rangle}\sum_i e^{\frac tT(\left\langle\nabla l_i(p),\tau\right\rangle-\left\langle\nabla l_j(p),\tau\right\rangle)}\nabla l_i(p).
\end{align*}
Therefore, by (2) in Lemma \ref{fan}, $$\Phi_T(t,-\tau)\sim e^{\rho\left\langle\nabla l_j(p),\tau\right\rangle}\sum_{i\in J}\nabla l_i(p)\neq0,$$ when $\rho=\frac tT$ is sufficiently large and hence it induces a self map on $S^{d-1}\subset T_p^{\sys}\mathcal T$ by post-composing the projection $\pi$. It can be formulated as follows:
\begin{align*}
    \pi\circ\Phi_T(\rho,-\tau)|_{\tau\in S^{d-1}} &= \pi\circ\Phi_T(t,-\tau)|_{t\tau\in S^{d-1}_{\rho T}}\\
    &= \pi\left(e^{\rho\left\langle\nabla l_j(p),\tau\right\rangle}\sum_i e^{\rho(\left\langle\nabla l_i(p),\tau\right\rangle-\left\langle\nabla l_j(p),\tau\right\rangle)}\nabla l_i(p)\right)\\
    &\to v_J \text{\ as\ } \rho\to\infty.
\end{align*}
Therefore, there exists $\rho_J>0$ such that $$\angle\left(\Phi_T(t,-\tau), \tau\right)<\frac{\pi}{2},$$ when $t>\rho_J T$, according to (3) in Lemma \ref{fan}. By finiteness of $\{J\}$, take $\rho>\max_J\{\rho_J\}$ and the angle condition is then satisfied for all $\tau\in S^{d-1}\subset T_p^{\sys}\mathcal T$. By Lemma \ref{degree}, $\Phi_T\left(\cdot,-\cdot\right)|_{S^{d-1}_{\rho T}}$ has degree 1, and equivalently, if we do not negate $\tau$, $$\deg\left(\pi\circ\Phi_T|_{S^{d-1}_{\rho T}}\right)=(-1)^d.$$ Therefore, $\Phi_T$ has a zero in the interior of the $\rho T$-sphere.
\end{proof}

The following lemma establishes uniqueness, which still holds for $\widetilde\Omega_T$, as we will see later, while the linearity below is only for $\Phi_T$.
\begin{lemma}
\label{linearity}
The zero in the above lemma is unique and it linearly approaches $p$ in $T$.
\end{lemma}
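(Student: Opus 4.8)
The plan is to analyze the map $\Phi_T$ restricted to the major subspace $T_p^{\sys}\mathcal T$ by exploiting its exact scaling structure. Write $\Phi_T(t,\tau)=\Phi(t/T,\tau)=:\Phi(s,\tau)$ with $s=t/T$, so that the zero set of $\Phi_T$ is obtained from the zero set of the single vector field $\Phi(s,\cdot)$ by the rescaling $t=Ts$; this already gives the "linear in $T$" part, once uniqueness is known, since a unique zero of $\Phi$ at scale $s_0$ corresponds to the unique zero of $\Phi_T$ at distance $Ts_0$ from $p$. So everything reduces to showing that $\Psi(v):=\sum_i e^{-\langle\nabla l_i(p),v\rangle}\nabla l_i(p)$, viewed as a map $T_p^{\sys}\mathcal T\to T_p^{\sys}\mathcal T$, has a unique zero.

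First I would observe that $\Psi$ is the gradient of the strictly convex function $G(v)=\sum_i e^{-\langle\nabla l_i(p),v\rangle}$ on $T_p^{\sys}\mathcal T$: its Hessian is $\sum_i e^{-\langle\nabla l_i(p),v\rangle}\,\nabla l_i(p)\otimes\nabla l_i(p)$, which is positive definite precisely because the $\nabla l_i(p)$ span $T_p^{\sys}\mathcal T$ by the definition of the major subspace. A strictly convex function has at most one critical point, which gives uniqueness immediately; existence was already supplied by Lemma \ref{existence1}. Alternatively, and perhaps more in the spirit of the surrounding fan-decomposition arguments, one can argue directly: if $\Psi(v_1)=\Psi(v_2)=0$ with $v_1\neq v_2$, then along the segment $v(r)=v_1+r(v_2-v_1)$ the scalar function $r\mapsto\langle\Psi(v(r)),v_2-v_1\rangle$ has nonzero derivative $\langle\mathrm{Hess}\,G(v(r))(v_2-v_1),v_2-v_1\rangle>0$ everywhere, so it cannot vanish at both $r=0$ and $r=1$, a contradiction. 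Either way the key input is the spanning property, hence the positive-definiteness of the Hessian on the major subspace.

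Then I would translate back: the unique zero $v_*$ of $\Psi$ lies in the interior of the $\rho$-ball produced in Lemma \ref{existence1} (with $\rho=t/T$), and the unique zero of $\Phi_T$ restricted to $T_p^{\sys}\mathcal T$ is $t_*\tau_*$ with $t_*\tau_*=Tv_*$, so its Weil--Petersson distance to $p$ in the normal coordinates is exactly $T\|v_*\|$, i.e.\ it approaches $p$ linearly in $T$. I expect the only genuinely delicate point to be making sure the spanning hypothesis is correctly invoked — that is, that $\{\nabla l_i(p)\}_{i\in I}$ really does span $T_p^{\sys}\mathcal T$ (true by Definition \ref{space}) and that we are working inside that subspace rather than the full tangent space, where the Hessian would only be positive semidefinite. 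Everything else is a short convexity argument plus bookkeeping of the $t=Ts$ rescaling.
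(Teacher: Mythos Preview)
Your proposal is correct and essentially matches the paper's proof: the linearity via the rescaling $t=Ts$ is identical, and your segment argument for uniqueness is precisely what the paper does (the paper writes out $f(s)=\langle\Phi_T((1-s)v_1+sv_2),v_1-v_2\rangle$ and checks monotonicity term by term, which is your Hessian computation unpacked). One small sign slip: $\Psi=-\nabla G$, not $+\nabla G$, so the derivative along the segment is strictly negative rather than positive, but this does not affect the argument.
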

\begin{proof}
Uniqueness: Suppose $v_1\neq v_2$ are two zeros for $\Phi_T(v)$. Consider the function $f(s):=\left\langle\Phi_T((1-s)v_1+sv_2),v_1-v_2\right\rangle$ that vanishes at $0$ and $1$. Note that $f$ is strictly increasing since each summand of $$f(s)=\sum_ie^{-\frac1T\left\langle\nabla l_i,v_1\right\rangle}e^{\frac sT\left\langle\nabla l_i,v_1-v_2\right\rangle}\left\langle\nabla l_i,v_1-v_2\right\rangle$$ is non-decreasing and at least one of them is strictly increasing, leading to a contradiction.

Linearity: Suppose $(t_0,\tau_0)$ is the zero for $\Phi_{T_0}$, then $\Phi_T\left(\frac{t_0}{T_0}T,\tau_0\right)=\Phi_{T_0}(t_0,\tau_0)=0$, that is, $\left(\frac{t_0}{T_0}T,\tau_0\right)$ is the zero for $\Phi_{T_0}$.
\end{proof}
Putting Lemma \ref{existence1} and Lemma \ref{linearity} together, we have:
\begin{theorem}
$\Phi_T$ restricted on $T_p^{\sys}\mathcal T$ has a unique zero, and it linearly approaches $p$ in $T$.
\end{theorem}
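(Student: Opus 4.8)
The plan is straightforward: the theorem is precisely the conjunction of the three facts already isolated in Lemma \ref{existence1} and Lemma \ref{linearity}, so the task is to assemble them and to record explicitly that ``linearly approaches $p$ in $T$'' means the Weil--Petersson (equivalently, normal-coordinate) distance from the zero to $p$ is $O(T)$ --- in fact exactly proportional to $T$.

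First I would invoke Lemma \ref{existence1} for \emph{existence}: restricted to $T_p^{\sys}\mathcal T$, the map $\Phi_T$ has a zero. The engine there is the fan decomposition: on the sphere of radius $\rho T$ one checks, using part (3) of Lemma \ref{fan}, that $\angle(\Phi_T(t,-\tau),\tau)<\pi/2$ once $\rho=t/T$ exceeds $\max_J\rho_J$ (finite by finiteness of $\{F_J\}$), whence $\pi\circ\Phi_T|_{S^{d-1}_{\rho T}}$ has degree $(-1)^d\neq 0$ by Lemma \ref{degree} and a zero must lie inside. Next I would cite the first half of Lemma \ref{linearity} for \emph{uniqueness}: if $v_1\neq v_2$ were two zeros, the scalar function $s\mapsto\langle\Phi_T((1-s)v_1+sv_2),v_1-v_2\rangle$ is strictly increasing --- each summand $e^{-\frac1T\langle\nabla l_i,v_1\rangle}e^{\frac sT\langle\nabla l_i,v_1-v_2\rangle}\langle\nabla l_i,v_1-v_2\rangle$ is nondecreasing in $s$, and at least one is strictly increasing since the $\nabla l_i(p)$ cannot all be orthogonal to $v_1-v_2$ --- yet it vanishes at $s=0$ and $s=1$, a contradiction.

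Finally, for the \emph{linear decay}, I would use the scaling identity from the second half of Lemma \ref{linearity}: $\Phi_T(t,\tau)$ depends on $(t,\tau)$ only through $(t/T,\tau)$, so if $(t_0,\tau_0)$ realizes the zero at parameter $T_0$ then $\bigl(\tfrac{t_0}{T_0}T,\tau_0\bigr)$ realizes it at parameter $T$; the zero therefore sits at distance $\tfrac{t_0}{T_0}\,T$ from $p$, which is linear in $T$. The assembly itself is routine; the only genuine obstacle sits upstream in Lemma \ref{existence1}, namely producing a radius $\rho$ that makes the angle condition hold \emph{uniformly} in the direction $\tau$, which is exactly what properties (2)--(4) of the fan decomposition and the finiteness of $\{F_J\}$ are for.
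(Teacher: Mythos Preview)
Your proposal is correct and matches the paper's own proof, which simply reads ``Putting Lemma \ref{existence1} and Lemma \ref{linearity} together, we have'' the theorem. Your elaboration of the internal mechanics of those lemmas is accurate and your justification that at least one summand is strictly increasing (because $v_1-v_2$ lies in $T_p^{\sys}\mathcal T=\spn\{\nabla l_i(p)\}$) fills in a detail the paper leaves implicit.
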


\subsection{Existence of critical points near eutactic points}
\label{behavior}
Recall that we suppose $p$ is a critical point for $\sys$, that is, $p$ is eutactic. Although the linear approximation $\Phi_T$ restricted on the major subspace $T_p^{\sys}\mathcal T$ has a unique zero, it is not strong enough to draw the same conclusion for $\nabla\syst$ when the orthogonal complement $T_p^{\sys\perp}\mathcal T$ is involved.

Note that the $t$ component in the coordinates $(t,\tau)$ that we are using for a small neighborhood of $p$ is the radial coordinate. As a reminder, we omit the exponential map `$\ex_p^*$' in $\ex_p^*l_i$, $\ex_p^*\nabla l_i$, and $\ex_p^*\Omega_T$, etc., by abuse of notation, while we use the $(t,\tau)$-coordinates. Since $l_i$ is real analytic, we can write $$l_i(t,\tau)=l_i(p) + t\left\langle\nabla l_i(p),\tau\right\rangle+\frac12t^2\nabla^2l_i(p)(\tau,\tau) + O_i(t^3)$$ and $$\nabla l_i(t,\tau)=\nabla l_i(p)+t\nabla^2l_i(p)(\tau,\cdot)+O_i(t^2).$$

We take the following second order approximation with respect to $T$ as follows for $t=\rho T$, i.e., when $t\tau\in S^{n-1}_{\rho T}$,
\begin{align*}
& e^{-\frac1T(l_i(t,\tau)-\sys(p))}\nabla l_i(t,\tau)\\
&= e^{-\frac1T(l_i(t,\tau)-\sys(p))}(\nabla l_i(p)+\rho T\nabla^2l_i(p)(\tau,\cdot)+O(T^2))\\
&= e^{\rho\left\langle\nabla l_i(p),\tau\right\rangle+\frac12\rho^2T\nabla^2l_i(p)(\tau,\tau) + O(T^2)}\nabla l_i(p)+\rho Te^{\rho\left\langle\nabla l_i(p),\tau\right\rangle + O(T)}\nabla^2l_i(p)(\tau,\cdot)\\ &+ e^{-\frac1T(l_i(t,\tau)-\sys(p))}O(T^2)\\
&= e^{\rho\left\langle\nabla l_i(p),\tau\right\rangle+\frac12\rho^2T\nabla^2l_i(p)(\tau,\tau)}\nabla l_i(p)+\rho Te^{\rho\left\langle\nabla l_i(p),\tau\right\rangle}\nabla^2l_i(p)(\tau,\cdot) + O(T^2).
\end{align*}

Therefore, if we write
\begin{align*}
    \Omega_T(t,\tau) &= \sum_ie^{-\frac 1T(t\left\langle\nabla l_i(p),\tau\right\rangle + \frac12 t^2 \nabla^2l_i(\tau,\tau))}\nabla l_i(p)\\
    &+ t\sum_ie^{-\frac 1T(t\left\langle\nabla l_i(p),\tau\right\rangle}\nabla^2l_i(p)(\tau,\cdot)+\epsilon_2(T,t,\tau),
\end{align*}
then $\epsilon_2(T,t,\tau)=O(T^2)$ when $t=\rho T$.
\begin{definition}
\label{ep2}
With the notation above, we denote the `main part' by $$\Psi_T:=\Omega_T-\epsilon_2.$$ As we observed above, $\epsilon_2(T,t,\tau)|_{t=\rho T}=O(T^2)$.
\end{definition}
We fix a constant $\theta_0$ below that will remain fixed and will be used repeatedly in the rest of the paper.
\begin{figure}[ht]
    \centering
    \includegraphics[width=10cm]{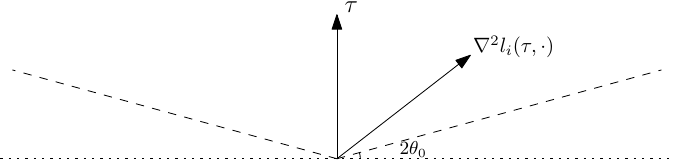}
    \caption{The angle between the two vectors is at most $\frac{\pi}{2}-2\theta_0$}
    \label{fig:theta0}
\end{figure}
\begin{definition}
Let $0<\theta_0<\frac{\pi}{6}$ be a constant (that depends on the base critical point $p$) that satisfies $$\angle(\nabla^2l_i(\tau,\cdot),\tau)<\frac{\pi}{2}-2\theta_0$$ for all $i$. The existence of $\theta_0$ is due to $\nabla^2 l_i(\tau,\tau)>0$ due to convexity in Theorem \ref{convex}, compactness of the unit sphere and finiteness of the number of shortest geodesics on $p$.
\end{definition}
Recall that $d:=\dim T_p^{\sys}\mathcal T$ is the dimension of the major subspace.
\begin{theorem}
\label{existence2}
There exists $\rho_0>0$ and $\epsilon_0=\epsilon_0(\rho)>0$ such that $\pi\circ\Psi_T(\cdot,\cdot)$ restricted on $S^{n-1}_{\rho T}\subset T_p\mathcal T$ has degree $(-1)^d$ for $\rho>\rho_0$ and $T<\epsilon_0$.
\end{theorem}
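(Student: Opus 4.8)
The plan is to compute the degree by comparing $\pi\circ\Psi_T$ with an orthogonal linear model through Lemma~\ref{degree}. Fix the splitting $T_p\mathcal T=T_p^{\sys}\mathcal T\oplus T_p^{\sys\perp}\mathcal T$ into major and minor subspaces, of dimensions $d$ and $n-d$, and let $L\colon T_p\mathcal T\to T_p\mathcal T$ be the involution equal to $-\mathrm{Id}$ on $T_p^{\sys}\mathcal T$ and $+\mathrm{Id}$ on $T_p^{\sys\perp}\mathcal T$. Since $L$ is orthogonal it restricts to a self-map of $S^{n-1}$, of degree $(-1)^d=\mathrm{sign}\det L$, so by Lemma~\ref{degree} it is enough to show that, once $\rho$ is large and $T=T(\rho)$ is then small, $\Psi_T(\rho T,\tau)$ is never a negative multiple of $L\tau$ for $\tau\in S^{n-1}$; the same estimates will give $\Psi_T(\rho T,\cdot)\neq 0$ on $S^{n-1}_{\rho T}$, so that $\pi\circ\Psi_T|_{S^{n-1}_{\rho T}}$ is a genuine continuous self-map of $S^{n-1}$. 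Write $\tau=\sigma+\eta$ with $\sigma\in T_p^{\sys}\mathcal T$ and $\eta\in T_p^{\sys\perp}\mathcal T$. Because $L\tau=-\sigma+\eta$ has $\langle L\tau,\eta\rangle=\norm{\eta}^2>0$ and $\langle L\tau,\sigma\rangle=-\norm{\sigma}^2<0$, if $\Psi_T(\rho T,\tau)$ were a negative multiple of $L\tau$ then both $\langle\Psi_T(\rho T,\tau),\eta\rangle\le 0$ and $\langle\Psi_T(\rho T,\tau),\sigma\rangle\ge 0$ would hold; hence for each $\tau$ it suffices to establish \emph{one} of the inequalities $\langle\Psi_T(\rho T,\tau),\eta\rangle>0$ or $\langle\Psi_T(\rho T,\tau),\sigma\rangle<0$.

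Recall $\Psi_T=P+R$ with $P=\sum_i w_i\nabla l_i(p)\in T_p^{\sys}\mathcal T$, all $w_i>0$, and $R=\rho T\sum_i w_i'\,\nabla^2l_i(p)(\tau,\cdot)$, all $w_i'>0$; write $R=R^{\mathrm{maj}}+R^{\mathrm{min}}$ for its components, and note $\langle\nabla l_i(p),\tau\rangle=\langle\nabla l_i(p),\sigma\rangle$ since $\nabla l_i(p)\perp\eta$. I would split $S^{n-1}$ according to the angle of $\tau$ to $T_p^{\sys\perp}\mathcal T$. When $\angle(\tau,T_p^{\sys\perp}\mathcal T)<2\theta_0$, the vector $\tau$ lies within $2\theta_0$ of $\eta/\norm{\eta}$, so the defining property of $\theta_0$, $\angle(\nabla^2l_i(p)(\tau,\cdot),\tau)<\frac{\pi}{2}-2\theta_0$, forces $\langle\nabla^2l_i(p)(\tau,\cdot),\eta\rangle>0$ for every $i$; therefore $\langle\Psi_T(\rho T,\tau),\eta\rangle=\langle R,\eta\rangle=\rho T\sum_i w_i'\langle\nabla^2l_i(p)(\tau,\cdot),\eta\rangle>0$. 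When $\angle(\tau,T_p^{\sys\perp}\mathcal T)\ge 2\theta_0$, so $\norm{\sigma}$ is bounded below, I would use the fan decomposition: Lemma~\ref{fan}(4) and the remarks after it give $\min_i\langle\nabla l_i(p),\sigma\rangle\le-\delta$ for a fixed $\delta>0$, so for $\rho$ large the weights $w_i$ concentrate on the index set $J(-\sigma)$ and $\langle P,\sigma\rangle=\sum_i w_i\langle\nabla l_i(p),\sigma\rangle$ is negative, of exponential size in $\rho$, whereas $\lvert\langle R^{\mathrm{maj}},\sigma\rangle\rvert=O(\rho T)\,\lvert\langle P,\sigma\rangle\rvert$ because $P$, $R^{\mathrm{maj}}$ and $\sum_i w_i$ are all dominated by the same indices $J(-\sigma)$; hence $\langle\Psi_T(\rho T,\tau),\sigma\rangle=\langle P,\sigma\rangle+\langle R^{\mathrm{maj}},\sigma\rangle<0$. (Testing $\Psi_T$ against $\eta$ in the first region and against $\sigma$ in the second also yields $\Psi_T(\rho T,\tau)\neq 0$.) Thus $\Psi_T(\rho T,\tau)$ is never a negative multiple of $L\tau$, and $\deg(\pi\circ\Psi_T|_{S^{n-1}_{\rho T}})=\deg(\pi\circ L)=(-1)^d$. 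Here $\rho_0$ is chosen large enough that the fan estimates are uniform over $\angle(\tau,T_p^{\sys\perp}\mathcal T)\ge 2\theta_0$, and then $\epsilon_0(\rho)$ is taken small---of order $\rho^{-2}$---so that the factor $e^{-\frac{\rho^2T}{2}\nabla^2l_i(p)(\tau,\tau)}$ inside $w_i$ stays bounded and $\rho T$ stays small.

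The half of the argument near the minor subspace is soft, using only strict convexity of geodesic-length functions (Theorem~\ref{convex}) as packaged in $\theta_0$. I expect the main obstacle to be the estimate in the complementary region: proving that $\langle P(\tau),\sigma\rangle$ is negative and that it exponentially dominates $\lvert\langle R^{\mathrm{maj}}(\tau),\sigma\rangle\rvert$ \emph{uniformly} over $\tau$ with $\angle(\tau,T_p^{\sys\perp}\mathcal T)\ge 2\theta_0$, all while keeping the Hessian-in-exponent correction under control. This needs the quantitative content of the fan decomposition---a comparison of the magnitudes of $P$, $R^{\mathrm{maj}}$ and $\sum_i w_i$ via their common dominant index set $J(-\sigma)$---rather than just its qualitative inward-pointing statement, and it is where the order of the quantifiers ($\rho$ large first, then $T$ small as a function of $\rho$) is essential. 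On the major subspace itself the computation reduces to that of Lemma~\ref{existence1}, the terms distinguishing $\Psi_T$ from $\Phi_T$ being of lower order there, so Theorem~\ref{existence2} amounts to promoting that statement from the sphere $S^{d-1}$ of the major subspace to the full sphere $S^{n-1}$, with the minor directions contributing no sign.
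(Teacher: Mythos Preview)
Your proposal is correct and follows essentially the same route as the paper: both compare $\pi\circ\Psi_T$ to an orthogonal involution on $T_p\mathcal T=T_p^{\sys}\mathcal T\oplus T_p^{\sys\perp}\mathcal T$ via Lemma~\ref{degree}, split the sphere by the angle to $T_p^{\sys\perp}\mathcal T$, use the $\theta_0$ convexity bound in the near-minor region and the quantitative fan decomposition (Lemma~\ref{fan}(4) and its remark) in the complementary region. Your choice to test $\Psi_T$ against the orthogonal components $\eta$ and $\sigma$ separately (rather than against $\tau$ itself, as the paper does after applying its involution $i$) is a mild streamlining, and your involution $L$ is simply the negative of the paper's $i$; otherwise the arguments match.
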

\begin{proof}
Let $i$ be the automorphism given by $$i(v_1+v_2)=v_1-v_2,$$ where $v_1\in T_X^{\sys}\mathcal T$ and $v_2\in T_X^{\sys\perp}\mathcal T$. For example, $i|_{T_X^{\sys}\mathcal T}=\textit{id}|_{T_X^{\sys}\mathcal T}$. To show the conclusion on $\pi\circ\Psi_T(\cdot,\cdot)$, we consider $i\circ\pi\circ\Psi_T(\cdot,-\cdot)$.

When $t=\rho T$, $$\Psi_T(t,-\tau)|_{S^{n-1}_{\rho T}}=\sum_ie^{\rho \left\langle\nabla l_i(p),\tau\right\rangle}\left(e^{-\frac12 \rho^2T \nabla^2l_i(p)(\tau,\tau)}\nabla l_i(p) -\rho T\nabla^2l_i(p)(\tau,\cdot)\right).$$
Let $$\Psi_1(t,-\tau)=\sum_ie^{\rho\left\langle\nabla l_i(p),\tau\right\rangle}e^{-\frac12\rho^2 T \nabla^2l_i(p)(\tau,\tau)}\nabla l_i(p)$$ and $$\Psi_2(t,-\tau)=-\sum_ie^{\rho\left\langle\nabla l_i(p),\tau\right\rangle}\rho T\nabla^2l_i(p)(\tau,\cdot).$$

We consider the following two cases by whether $\tau$ is almost perpendicular to the major subspace as shown in Figure \ref{fig:Two positions of tau}.

\begin{figure}[ht]
    \centering
    \includegraphics{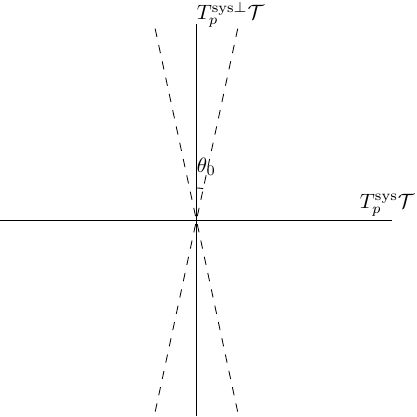}
    \caption{Two positions of $\tau$: almost perpendicular to the major subspace or not}
    \label{fig:Two positions of tau}
\end{figure}

\textbf{Case 1}: When $\angle(\tau,T_p^{\sys\perp}\mathcal T)\le\theta_0$. Let $\tau'$ be the projection of $\tau$ onto $T_p^{\sys\perp}\mathcal T$, so $\angle(\tau',\tau)\le\theta_0$. Recall that we have $\angle(\nabla^2l_i(p)(\tau,\cdot),\tau)<\frac{\pi}{2}-2\theta_0$ by the definition of $\theta_0$, and then it follows that $\angle(-\Psi_2(t,-\tau),\tau)<\frac{\pi}{2}-2\theta_0$. By triangle inequality, we have
\begin{align*}
    \angle(-\Psi_2(t,-\tau),\tau')&<\angle(-\Psi_2(t,-\tau),\tau)+\angle(\tau,\tau')\\
    &<\frac{\pi}{2}-2\theta_0+\theta_0=\frac{\pi}{2}-\theta_0,
\end{align*}
which in other words implies $i(\Psi_2(t,-\tau))\in \mathbb H(\tau')$ by definition of $i$. Note that $\Psi_1\in T_p^{\sys}\mathcal T\subset\partial\mathbb H(\tau')$, thus $$i(\Psi_T(t,-\tau))=\Psi_1(t,-\tau)+i(\Psi_2(t,-\tau))\in \mathbb H(\tau').$$ Therefore, $$\angle(i(\Psi_T(t,-\tau)),\tau)<\angle(\Psi_1+i(\Psi_2),\tau')+\angle(\tau',\tau)<\frac{\pi}{2}+\theta_0.$$

\textbf{Case 2}: When $\angle(\tau,T_p^{\sys\perp}\mathcal T)\ge\theta_0$. Similarly to what we have in (4) in Lemma \ref{fan}, by compactness of the region of such $\tau$'s in this case, there exists $D>0$ such that $\left\langle\nabla l_j(p),\tau\right\rangle > D$ for any $\tau$ and all $j\in J(\tau)$.

Let $M, m>0$ be the maximum and minimum of $\{\|\nabla l_i(p)\|,\|\nabla^2l_i(p)(\tau,\cdot)\|\}_{i,\tau}$, respectively, where positivity is given by Theorem \ref{convex}.

Let $K=\{k:\left\langle\nabla l_k,\tau\right\rangle\le0\}$ that is disjoint from $J(\tau)$. For any $j\in J(\tau)$ and $k\in K$, we have the following estimates:
\begin{align*}
    \norm*{\Psi_1^j(t,-\tau)} &> \left\langle\Psi_1^j(t,-\tau),\tau\right\rangle\\
    & = e^{\rho\left\langle\nabla l_j(p),\tau\right\rangle}e^{-\frac12\rho^2 T \nabla^2l_j(p)(\tau,\tau)}\left\langle\nabla l_j(p),\tau\right\rangle\\
    & > e^{\rho D}e^{-\frac12\rho^2 TM}D,
\end{align*}
\begin{align*}
    \norm*{\Psi_1^k(t,-\tau)} &=e^{\rho\left\langle\nabla l_k(p),\tau\right\rangle}e^{-\frac12\rho^2 T \nabla^2l_k(p)(\tau,\tau)}\norm*{\nabla l_k(p)}\\
    &\le 2e^{\frac12\rho^2 TM}M,
\end{align*}
and
\begin{align*}
    \norm*{\Psi_2(t,-\tau)} &\le \sum_ie^{\rho\left\langle\nabla l_i(p),\tau\right\rangle}\rho T\norm*{\nabla^2l_i(p)(\tau,\cdot)}\\
    &\le re^{\rho M}\rho TM,
\end{align*}
where $r=\#S(p)$.

Note that all $\left\langle\Psi_1^i(t,-\tau),\tau\right\rangle\ge0$ for $i\in(J(\tau)\cup K)^c$, then putting all terms together we have
\begin{align*}
    \left\langle\Psi_T(t,-\tau)|_{t=\rho T},\tau\right\rangle & = \left\langle\Psi_1^J+\Psi_1^K+\Psi_1^{(J\cup K)^c}+\Psi_2,\tau\right\rangle\\
    & \ge \left\langle\Psi_1^J,\tau\right\rangle-\norm*{\Psi_1^K}-\norm*{\Psi_2}\\
    & \ge e^{\rho D}e^{-\frac12\rho^2TM}D-2e^{\frac12\rho^2TM}M-re^{\rho M}\rho TM,
\end{align*}
where $J=J(\tau)$ for short.
If we set $T<e^{-\rho M}$, the expression above will go to $+\infty$ as $\rho\to\infty$. Therefore, we fix $\rho$ such that $$\left\langle\Psi_T(t,-\tau)|_{t=\rho T},\tau\right\rangle>0$$
for any $T<e^{-\rho M}$.

With the choice of $\rho$ and the two cases considered above, $$\deg i\circ\pi\circ\Psi_T(\cdot,-\cdot)=1.$$ The theorem follows.
\end{proof}

To make the approximation effective as in Theorem \ref{main}, we bound the size of $\Psi_T$.

\begin{lemma}
\label{norm2}
With the same notations above, we have the following estimates on the norms when $t=\rho T$:

(1) When $\angle(\tau,T_p^{\sys\perp}\mathcal T)\le\theta_0$, $$\norm*{\Psi_T(t,-\tau)}>\rho Te^{-\rho M\sin\theta_0}m\sin\theta_0\cos\theta_0,$$

(2) When $\angle(\tau,T_p^{\sys\perp}\mathcal T)\ge\theta_0$, $$\norm*{\Psi_T(t,-\tau)}>\frac12e^{\rho D}e^{-\frac12\rho^2TM}D.$$

Together we may claim that $\norm*{\Psi_T|_{t=\rho T}}>C(p)T.$
\end{lemma}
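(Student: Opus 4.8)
The plan is to estimate $\|\Psi_T(t,-\tau)\|$ from below directly, in each of the two regimes for $\tau$ already isolated in the proof of Theorem~\ref{existence2} (almost perpendicular to the major subspace or not), keeping $\rho$ --- and hence $M$, $m$, $\theta_0$, $D$ --- fixed exactly as there, possibly enlarging $\rho$ and shrinking $\epsilon_0$ at the end; then I extract a common lower bound of the form $C(p)T$. Recall that on $S^{n-1}_{\rho T}$ one has $\Psi_T(t,-\tau)=\Psi_1(t,-\tau)+\Psi_2(t,-\tau)$ with $\Psi_1(t,-\tau)\in T_p^{\sys}\mathcal T$ and $-\Psi_2(t,-\tau)=\sum_i e^{\rho\langle\nabla l_i(p),\tau\rangle}\rho T\,\nabla^2l_i(p)(\tau,\cdot)$.

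\textbf{Case 1} ($\angle(\tau,T_p^{\sys\perp}\mathcal T)=\alpha\le\theta_0$). The decisive observation is that $\Psi_1(t,-\tau)$ lies in the major subspace and is therefore invisible to any minor direction, while the defining inequality of $\theta_0$ forces each term of $-\Psi_2(t,-\tau)$ to point roughly along $\tau$, hence roughly along the (non-normalized) projection $\tau'$ of $\tau$ onto $T_p^{\sys\perp}\mathcal T$. So I test $-\Psi_T(t,-\tau)$ against $\tau'$: since $\tau'\in T_p^{\sys\perp}\mathcal T$ we have $\langle\Psi_1,\tau'\rangle=0$, and since $\|\tau'\|=\cos\alpha\le1$, Cauchy--Schwarz gives $\|\Psi_T\|\ge\langle-\Psi_T,\tau'\rangle=\langle-\Psi_2,\tau'\rangle$. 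Now $\nabla l_i(p)\in T_p^{\sys}\mathcal T$ and the major part of the unit vector $\tau$ has length $\sin\alpha\le\sin\theta_0$, so $\langle\nabla l_i(p),\tau\rangle\ge-M\sin\theta_0$ and $e^{\rho\langle\nabla l_i(p),\tau\rangle}\ge e^{-\rho M\sin\theta_0}$; and by the triangle inequality for angles $\angle(\nabla^2l_i(p)(\tau,\cdot),\tau')\le(\tfrac\pi2-2\theta_0)+\theta_0=\tfrac\pi2-\theta_0$, so, using $\|\nabla^2l_i(p)(\tau,\cdot)\|\ge m>0$ (Theorem~\ref{convex}) and $\|\tau'\|=\cos\alpha\ge\cos\theta_0$, one gets $\langle\nabla^2l_i(p)(\tau,\cdot),\tau'\rangle\ge m\sin\theta_0\cos\theta_0$. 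Every summand of $\langle-\Psi_2,\tau'\rangle$ is thus positive and at least $e^{-\rho M\sin\theta_0}\rho T\,m\sin\theta_0\cos\theta_0$, and keeping a single summand yields (1).

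\textbf{Case 2} ($\angle(\tau,T_p^{\sys\perp}\mathcal T)\ge\theta_0$), and combining. Here I reuse the inequality already derived inside the proof of Theorem~\ref{existence2}: for $t=\rho T$ and $T<e^{-\rho M}$,
\[
\langle\Psi_T(t,-\tau),\tau\rangle\ \ge\ e^{\rho D}e^{-\frac12\rho^2TM}D-2e^{\frac12\rho^2TM}M-re^{\rho M}\rho TM .
\]
With $T<e^{-\rho M}$ one has $\rho^2TM<\rho^2Me^{-\rho M}\to0$ and $re^{\rho M}\rho TM<r\rho M$, so the first term --- which grows like $e^{\rho D}$ --- dominates the other two (bounded, resp.\ linear in $\rho$) as $\rho\to\infty$; hence, after possibly enlarging $\rho$, the right side exceeds $\tfrac12 e^{\rho D}e^{-\frac12\rho^2TM}D$, and since $|\tau|=1$ this gives $\|\Psi_T\|\ge\langle\Psi_T,\tau\rangle>\tfrac12 e^{\rho D}e^{-\frac12\rho^2TM}D$, i.e.\ (2). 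Finally, with $\rho$ fixed, the bound (1) is $C_1(p)T$ where $C_1(p)=\rho e^{-\rho M\sin\theta_0}m\sin\theta_0\cos\theta_0>0$, while for $T<\epsilon_0$ the bound (2) is at least the positive constant $C_2(p):=\tfrac12 e^{\rho D}e^{-\frac12\rho^2\epsilon_0M}D$; shrinking $\epsilon_0$ below $C_2(p)/C_1(p)$ makes $\|\Psi_T|_{t=\rho T}\|>C_1(p)T$ hold in both cases, so we take $C(p):=C_1(p)$.

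\textbf{Expected obstacle.} The only genuinely delicate point is Case~1: one has to see that the minor component of $\Psi_T$ does not collapse, which is precisely where $\Psi_1\in T_p^{\sys}\mathcal T$ and the balancing of the two angle budgets --- $\tfrac\pi2-2\theta_0$ for $\nabla^2l_i(p)(\tau,\cdot)$ against $\tau$, and $\theta_0$ for $\tau$ against its minor projection --- are used; testing against $\tau'$ rather than projecting $\Psi_2$ onto the minor subspace is what prevents cancellation among the summands. Case~2 is essentially bookkeeping on an estimate already in hand.
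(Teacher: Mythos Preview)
Your proof is correct and follows essentially the same approach as the paper: in Case~1 you project onto the minor component $\tau'$ of $\tau$, use $\Psi_1\perp\tau'$, the bound $\langle\nabla l_i(p),\tau\rangle\ge -M\sin\theta_0$, and the angle budget $(\tfrac{\pi}{2}-2\theta_0)+\theta_0$ exactly as the paper does; in Case~2 you reuse the inequality from Theorem~\ref{existence2} and argue dominance of the leading term, which is what the paper means by ``follows directly from the definition of $\rho$''. Your combining step is more explicit than the paper's one-line claim but equivalent.
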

\begin{proof}
In the first case, let $\tau'$ be the projection of $\tau$ onto the minor subspace and consider the projection of $\Psi_T$ onto $\tau'$.

Note that $$\left\langle\nabla l_i(p),\tau\right\rangle=\|\nabla l_i(p)\|\cos\angle(\nabla l_i,\tau)>-M\sin\theta_0,$$ and as a reminder, $\theta_0$ is chosen such that $\angle(\nabla^2l_i(\tau,\cdot),\tau)<\frac{\pi}{2}-2\theta_0$, and so $$\angle(\nabla^2l_i(p)(\tau,\cdot),\tau')<\angle(\nabla^2l_i(p)(\tau,\cdot),\tau)+\angle(\tau,\tau')<\frac{\pi}{2}-\theta_0$$ and
\begin{align*}
    \left\langle\nabla^2l_i(p)(\tau,\cdot),\tau'\right\rangle&=\norm*{\nabla^2l_i(p)(\tau,\cdot)}\cdot\norm*{\tau'}\cos\angle(\nabla^2l_i(p)(\tau,\cdot),\tau')\\
    &>m\sin\theta_0\cos\theta_0.
\end{align*}
Therefore,
\begin{align*}
    \left\langle\Psi_T,-\tau'\right\rangle&=\sum_ie^{\rho\left\langle\nabla l_i(p),\tau\right\rangle}\rho T\left\langle\nabla^2l_i(p)(\tau,\cdot),\tau'\right\rangle\\
    &>\rho Te^{-\rho M\sin\theta_0}m\sin\theta_0\cos\theta_0.
\end{align*}
The first claim follows. The second follows directly from the definition of $\rho$.
\end{proof}
What Theorem \ref{existence2} tells us is that $\Psi_T$ has a zero in the interior of $B^n_{\rho T}$ by Lemma \ref{degree}. Now we are ready to prove the same conclusion for $\widetilde\Omega_T$:
\begin{theorem}[Existence]
\label{main}
With the same $\rho$ above, $\syst$ has a critical point in the $\rho T$-ball $B^n_{\rho T}\subset T_p\mathcal T$ when $T$ is sufficiently small.
\end{theorem}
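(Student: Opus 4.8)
\section*{Proof proposal for Theorem~\ref{main}}

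The plan is to assemble the three ingredients already in hand — the degree computation for $\Psi_T$ (Theorem~\ref{existence2}), the lower bound $\norm{\Psi_T|_{t=\rho T}}>C(p)T$ (Lemma~\ref{norm2}), and the two approximation estimates (Definition~\ref{ep2} and Lemma~\ref{C1tail}) — by means of the elementary homotopy lemma (Lemma~\ref{degree}), and then to invoke the standard topological fact that a self-map of $S^{n-1}$ of nonzero degree does not extend continuously over the ball $B^n$. Throughout I work in normal coordinates at $p$, suppressing $\ex_p$ as elsewhere in the paper, and I first record that $\widetilde\Omega_T$ equals $\nabla\syst$ up to multiplication by the strictly positive scalar function $e^{\frac1T\sys(p)}\sum_\gamma e^{-\frac1T l_\gamma}$; since $\syst$ is $C^2$ near $p$ by Theorem~\ref{C0}, $\widetilde\Omega_T$ is a continuous vector field defined on a full neighborhood of $p$, and a zero of $\widetilde\Omega_T$ in $B^n_{\rho T}$ is precisely a critical point of $\syst$ there. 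So it suffices to produce such a zero.

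On the sphere $S^{n-1}_{\rho T}$ I would write $\widetilde\Omega_T-\Psi_T=\epsilon_2+(\widetilde\Omega_T-\Omega_T)$. The first term satisfies $\norm{\epsilon_2}=O(T^2)$ when $t=\rho T$ by Definition~\ref{ep2}; the second satisfies $\norm{\widetilde\Omega_T-\Omega_T}<c''e^{-\frac1T\secsys(X)}$ by Lemma~\ref{C1tail}, and since length functions vary continuously and $\secsys(p)>\sys(p)>0$, one has $\secsys\ge c_1>0$ on a fixed neighborhood of $p$ containing $B^n_{\rho T}$ for $T$ small, so this term is $O(e^{-c_1/T})$. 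Both error terms are therefore $o(T)$, hence strictly smaller than $C(p)T\le\norm{\Psi_T|_{t=\rho T}}$ once $T$ is sufficiently small. Consequently, for every $\lambda\in[0,1]$ and every point of $S^{n-1}_{\rho T}$,
\[
\norm{(1-\lambda)\Psi_T+\lambda\widetilde\Omega_T}=\norm{\Psi_T+\lambda(\widetilde\Omega_T-\Psi_T)}\ge\norm{\Psi_T}-\norm{\widetilde\Omega_T-\Psi_T}>0,
\]
so by Lemma~\ref{degree} the maps $\pi\circ\Psi_T$ and $\pi\circ\widetilde\Omega_T$, restricted to $S^{n-1}_{\rho T}$, are homotopic; by Theorem~\ref{existence2} both then have degree $(-1)^d\ne0$, where $d=\dim T_p^{\sys}\mathcal T$.

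Finally, were $\widetilde\Omega_T$ nowhere zero on the closed ball $B^n_{\rho T}$, then $\pi\circ\widetilde\Omega_T$ would be defined and continuous on all of $B^n_{\rho T}$; restricting it to the spheres of radius $s\rho T$ for $s\in[0,1]$ would exhibit $\pi\circ\widetilde\Omega_T|_{S^{n-1}_{\rho T}}$ as null-homotopic, contradicting that its degree is $(-1)^d\ne0$. Hence $\widetilde\Omega_T$, and therefore $\nabla\syst$, vanishes at some interior point of $B^n_{\rho T}\subset T_p\mathcal T$, which is the asserted critical point.

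The substantive work has already been done in Theorem~\ref{existence2} and Lemma~\ref{norm2}, so the only real obstacle here is bookkeeping: making sure the $O(T^2)$ control on $\epsilon_2$ and the lower bound $\secsys\ge c_1$ are uniform over all of $S^{n-1}_{\rho T}$ rather than merely pointwise, and that the vector field, its approximants, and the straight-line homotopy all live coherently in the single chart $\ex_p$ so that the degree argument is legitimate; I expect no genuine difficulty beyond this.
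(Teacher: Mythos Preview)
Your argument is correct and follows essentially the same route as the paper: bound $\norm{\widetilde\Omega_T-\Psi_T}$ on $S^{n-1}_{\rho T}$ by the two-step approximation $\widetilde\Omega_T\approx\Omega_T\approx\Psi_T$ (Lemma~\ref{C1tail} and Definition~\ref{ep2}), compare this with the lower bound $\norm{\Psi_T}>C(p)T$ from Lemma~\ref{norm2}, invoke Lemma~\ref{degree} to transfer the degree computed in Theorem~\ref{existence2}, and conclude by the standard extension-over-the-ball argument. The only cosmetic difference is that the paper phrases the final homotopy via the angle bound $\angle(\tau,i\circ\Psi_T(t,-\tau))\le\frac{\pi}{2}+\theta_0$ (comparing $i\circ\widetilde\Omega_T(t,-\tau)$ directly to the identity), whereas you compare $\widetilde\Omega_T$ to $\Psi_T$ via the straight-line homotopy; both are applications of Lemma~\ref{degree} and amount to the same thing.
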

\begin{proof}

To recall the idea, we take a two-step approximation $$\widetilde\Omega_T\approx \Omega_T\approx \Psi_T.$$

When $T$ is sufficiently small, any point $X$ in the geodesic ball $\ex_p(B_{\rho T})$ centered at $p$ satisfies that $\secsys(X)>\secsys(p)$, and thus by Lemma \ref{C1tail}, $$\norm*{\widetilde\Omega_T(X)-\Omega_T(X)}<c''e^{-\frac1T\secsys(X)}<c''e^{-\frac1T\secsys(p)}.$$

For the second approximation, since $\rho$ is already fixed as in Theorem \ref{existence2}, by Definition \ref{ep2}, $$\norm*{\Omega_T(X)-\Psi_T(X)}=O(T^2).$$

Therefore, the total error can be given by $$\norm*{\widetilde\Omega_T(X)-\Psi_T(X)}=O(T^2).$$

Since the angle between $\tau$ and $i\circ\Psi_T(t,-\tau)|_{S^{n-1}_{\rho T}}$ can be bounded by $\frac{\pi}{2}+\theta_0$ (which is not optimal), and $\norm*{i\circ\Psi_T(t,-\tau)|_{S^{n-1}_{\rho T}}}>C(p)T$, by the above lemma, with error taken into effect, the conditions in Lemma \ref{degree} are satisfied, and $$\deg(\widetilde\Omega_T)=\deg(\Psi_T)=(-1)^d.$$

The existence follows.
\end{proof}
\begin{corollary}
Suppose that the uniqueness (which will be proved in Theorem \ref{unique}) is satisfied for the existence above. Let $p_T$ denote the unique critical point for $\syst$ near $p$, then $d(p,p_T)$ is sublinear in $T$, meaning $d(p,p_T)<CT$ for some $C$.
\end{corollary}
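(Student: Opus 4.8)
The plan is to read the bound off directly from the localization already achieved in Theorem~\ref{main}, using the standing convention that all of Subsection~\ref{behavior} is carried out in normal coordinates at $p$. First I would recall that the base point $p$ is fixed, that the radius $\rho>\rho_0$ supplied by Theorem~\ref{existence2} depends only on $p$ (the threshold $\epsilon_0=\epsilon_0(\rho)$ is chosen \emph{after} $\rho$, so $\rho$ is a genuine constant), and that the chart in use is $\ex_p\colon T_p\mathcal T\to\mathcal T$. Since $\ex_p$ is a radial isometry for the Weil-Petersson metric, it carries the Euclidean ball $B^n_{\rho T}\subset T_p\mathcal T$ onto the Weil-Petersson geodesic ball of radius $\rho T$ about $p$; in particular every surface represented by a point of $B^n_{\rho T}$ lies at Weil-Petersson distance strictly less than $\rho T$ from $p$.

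Next, Theorem~\ref{main} produces, for $T$ sufficiently small, a critical point of $\syst$ inside $B^n_{\rho T}$. Granting the uniqueness of Theorem~\ref{unique} --- which asserts that $\syst$ has only one critical point in some fixed neighborhood $U$ of $p$ --- I would observe that once $T$ is small enough that $\rho T$ is below the radius of $U$, the critical point furnished by Theorem~\ref{main} lies in $U$ and hence must equal $p_T$. Therefore $d(p,p_T)<\rho T$, and the corollary holds with $C=\rho$ (any constant $\ge\rho$ works).

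There is no substantial obstacle here: the statement is essentially a reformulation of Theorem~\ref{main} once uniqueness is available. The only points that need a line of care are that $\rho$ does not depend on $T$, so that the ball $B^n_{\rho T}$ genuinely shrinks linearly in $T$, and that the uniqueness neighborhood of Theorem~\ref{unique} eventually contains $\ex_p(B^n_{\rho T})$; both are immediate from the way $\rho$ and $U$ are constructed, since $\rho T\to 0$ as $T\to 0^+$.
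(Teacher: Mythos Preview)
Your argument is correct and is exactly the reasoning the paper intends: the corollary is stated without proof because it is immediate from Theorem~\ref{main}, and you have spelled out precisely that immediacy, taking $C=\rho$. One small inaccuracy: Theorem~\ref{unique} asserts uniqueness \emph{within the $\rho T$-ball}, not in a fixed neighborhood $U$; your detour through $U$ is unnecessary (though harmless, since the companion ``Nonexistence outside the $\rho T$-ball'' theorem does supply such a $U=B(p,r_p)$). The direct route is simply that Theorem~\ref{main} places a critical point in $B^n_{\rho T}$, Theorem~\ref{unique} says it is the only one there, so that point is $p_T$ and $d(p,p_T)<\rho T$.
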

We also prove the nonexistence of critical points outside of the $\rho T$-ball centered at $p$:
\begin{figure}[ht]
    \centering
    \includegraphics[width=7cm]{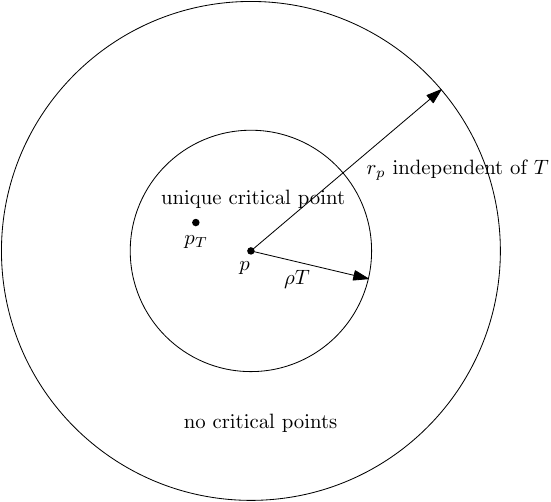}
    \caption{Near eutactic $p$: there is a unique critical point at most $\rho T$ away from $p$ in a fixed small geodesic ball independent of $T$}
    \label{fig:near eutactic p}
\end{figure}
\begin{theorem}[Nonexistence outside the $\rho T$-ball]
There exist $\rho_1>0$, $\epsilon_1=\epsilon_1(\rho)>0$ and $r_p>0$ for each $p$, such that there are no critical points of $\syst$ with distance from $p$ in $[\rho T,r_p]$ for $\rho>\rho_1$ and $T<\min\left\{\epsilon_1,\frac{r_p}{\rho}\right\}$.
\end{theorem}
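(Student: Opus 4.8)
The plan is to show that the rescaled gradient $\widetilde\Omega_T$, whose zeros are exactly the critical points of $\syst$, does not vanish at any point at Weil--Petersson distance in $[\rho T, r_p]$ from $p$, by exhibiting at each such point a tangent direction along which $\langle\widetilde\Omega_T,\cdot\rangle$ has a definite sign. First I would fix $\ell_1\in(\sys(p),\secsys(p))$ and choose $r_p>0$ so small that $\ex_p(B_{r_p})$ lies in the thick part, that $l_\delta>\ell_1$ on it for every simple closed geodesic $\delta\notin S(p)$ — whence $S(X)\subset S(p)$ there and, by the argument of Lemma \ref{C1tail}, $\norm{\widetilde\Omega_T(X)-\Omega_T(X)}\le c''e^{-\ell_1/T}$ — that the second-order Taylor expansions of $l_i,\nabla l_i$ from Subsection \ref{behavior} hold on $B_{r_p}$ with uniform remainders, and that $r_p$ is small relative to the constants $D$ (Lemma \ref{fan}(4) and its remarks), $\lambda_{\min}\le\lambda_{\max}$ (eigenvalue bounds for the $\nabla^2 l_i(p)$ on the unit sphere, positive by Theorem \ref{convex}), an upper bound $M$ for $\norm{\nabla l_i}$ over $i\in S(p)$ on $\ex_p(B_{r_p})$, and $\ell_1$, say $Mr_p<\ell_1-\sys(p)$; I would also shrink $\theta_0$ so that $\lambda_{\max}\sin\theta_0<\tfrac12\lambda_{\min}\cos\theta_0$. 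Writing $X=\ex_p(t\tau)$ with $|\tau|=1$, $t\in[\rho T,r_p]$, and $w_i=e^{-\frac1T(l_i(X)-\sys(p))}$, so that $\Omega_T(X)=\sum_{i\in S(p)}w_i\nabla l_i(X)$, I would then split into two cases according to the position of $\tau$, paralleling the proof of Theorem \ref{existence2}.

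When $\angle(\tau,T_p^{\sys\perp}\mathcal T)\ge\theta_0$, the remark following Lemma \ref{fan} (applied to $-\tau$) provides an index $j$ with $\langle\nabla l_j(p),\tau\rangle<-D$; the Taylor expansions then give $l_j(X)\le\sys(p)-\tfrac D2 t$ and $\langle\nabla l_j(X),\tau\rangle<-\tfrac D2$, so $w_j\ge e^{D\rho/2}$ and this single term makes $\langle\Omega_T(X),\tau\rangle$ as negative as $-\tfrac D2 w_j$. Every summand with $\langle\nabla l_i(X),\tau\rangle\ge 0$ satisfies $l_i(X)-l_j(X)\ge\tfrac D4 t$, hence $w_i\le w_j e^{-D\rho/4}$, so their total is at most $M\,\#S(p)\,w_j e^{-D\rho/4}$; fixing $\rho_1$ with $M\,\#S(p)\,e^{-D\rho_1/4}<\tfrac D4$ gives $\langle\Omega_T(X),\tau\rangle\le-\tfrac D4 w_j$ for all $\rho>\rho_1$. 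Since $w_j$ is bounded below by the positive constant $e^{D\rho/2}$ while $\norm{\widetilde\Omega_T(X)-\Omega_T(X)}\le c''e^{-\ell_1/T}\to 0$, for $T$ small we obtain $\langle\widetilde\Omega_T(X),\tau\rangle<0$.

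When $\angle(\tau,T_p^{\sys\perp}\mathcal T)<\theta_0$, let $\tau'$ be the unit vector along the projection of $\tau$ onto $T_p^{\sys\perp}\mathcal T$. Since $\nabla l_i(p)\perp\tau'$ the first-order term drops out, $\langle\nabla l_i(X),\tau'\rangle=t\,\nabla^2 l_i(p)(\tau,\tau')+O(t^2)$; decomposing $\tau$ into its major and minor parts and using $\nabla^2 l_i(p)\ge\lambda_{\min}\,\mathrm{Id}$ together with the choice of $\theta_0$ gives $\nabla^2 l_i(p)(\tau,\tau')\ge\tfrac12\lambda_{\min}\cos\theta_0>0$, so $\langle\nabla l_i(X),\tau'\rangle\ge\tfrac14\lambda_{\min}\cos\theta_0\,t$ for \emph{all} $i\in S(p)$. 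As each $w_i\ge e^{-Mt/T}$ (because $l_i(X)-\sys(p)\le Mt$), this forces $\langle\Omega_T(X),\tau'\rangle\ge\tfrac14\lambda_{\min}\cos\theta_0\,\rho T\,e^{-Mr_p/T}$, which dominates $c''e^{-\ell_1/T}$ for $T$ small since $Mr_p<\ell_1$; hence $\langle\widetilde\Omega_T(X),\tau'\rangle>0$. In both cases $\widetilde\Omega_T(X)\ne 0$, so $\syst$ has no critical point at distance in $[\rho T,r_p]$ from $p$ once $\rho>\rho_1$ and $T<\epsilon_1(\rho)$, the smallness threshold extracted above; the hypothesis $T<r_p/\rho$ only serves to keep this annulus nonempty.

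I expect two steps to carry the real weight. The first is the uniform lower bound $l_\delta>\ell_1$ on $\ex_p(B_{r_p})$ holding simultaneously for all of the infinitely many $\delta\notin S(p)$: continuity of the individual length functions is not enough, and I would obtain it by combining compactness of $\overline{\ex_p(B_{r_p})}$ with the bounded distortion of geodesic lengths under bounded Teichm\"uller distance, which confines to a finite set the classes that can drop below $\ell_1$ on the ball, reducing the claim to continuity at $p$ for those. The second is the nearly-perpendicular case, where all the first-order data $\langle\nabla l_i(p),\tau\rangle$ degenerate and the sign of $\langle\widetilde\Omega_T(X),\tau'\rangle$ must be extracted from positive definiteness of the Weil--Petersson Hessians via Wolpert's convexity (Theorem \ref{convex}); this is, however, exactly the mechanism already used in the first case of the proof of Theorem \ref{existence2}, so it is more a matter of careful bookkeeping than a genuinely new obstacle.
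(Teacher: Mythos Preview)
Your proposal is correct and follows essentially the same two-case strategy as the paper: split according to whether $\angle(\tau,T_p^{\sys\perp}\mathcal T)$ is above or below $\theta_0$, use a dominant term coming from the fan decomposition when $\tau$ is far from the minor subspace, and use positivity of the Hessians projected onto the minor direction when $\tau$ is nearly perpendicular. The only differences are cosmetic—the paper exploits the angle definition of $\theta_0$ directly rather than shrinking it via eigenvalue bounds, and organizes the far-from-perpendicular case with the cones $C^{\sys}(\tau,\theta)$ and somewhat different index sets $J,K$—but the mechanism is identical; just be careful that the comparison of main part versus tail in your near-perpendicular case really uses $Mr_p<\ell_1-\sys(p)$ (as you set up), not merely $Mr_p<\ell_1$.
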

\begin{proof}
We recall that $0<\theta_0<\frac{\pi}{6}$ is chosen such that for all $i$, $$\angle\left(\nabla^2l_i\left(\tau,\cdot\right),\tau\right)<\frac{\pi}{2}-2\theta_0.$$ For $\tau\not\in T_p^{\sys\perp}\mathcal T$, let $C^{\sys}(\tau,\theta)$ be the cone in the major subspace centered at $\proj_{T_p^{\sys}\mathcal T}\tau$ with angle $\theta$. Note that $v\in C^{\sys}(\tau,\theta)\times T_p^{\sys\perp}\mathcal T$ if and only if $$\angle(\proj_{T_p^{\sys}\mathcal T}v,\proj_{T_p^{\sys}\mathcal T}\tau)<\theta.$$ For example, if $\tau\in T_p^{\sys}\mathcal T$, then $C^{\sys}\left(\tau,\frac12\pi\right)\times T_p^{\sys\perp}\mathcal T=\mathbb H(\tau)$.

Let $U_p=B(p,r_p)$ be a geodesic ball centered at $p$ and $C>0$ a constant with $Cr_p<\frac12$, such that for any $q\in\overline{U}_p$, if we let $t=d(p,q)$ and $\tau=\frac{1}{t}\ex_p^{-1}(q)$, the following conditions are satisfied:

(1) By continuity, $$\left|l_i(q)-l_i(p)-t\left\langle\nabla l_i(p),\tau\right\rangle\right|<Ct^2,$$ for all $i$ and $$l_j(q)-l_j(p)<0,$$  for $j\in J(-\tau)$,

(2) By continuity, $$\|\nabla l_i(q)-\nabla l_i(p)\|<Ct,$$ for all $i$ and consequently $$\angle(\nabla l_i(q),\nabla l_i(p))<\frac12\min\left\{\theta_0,\frac12\pi-\theta_1\right\},$$ where $\theta_1$ is chosen in Case 2 below, and $$\frac12<\frac{\norm*{\nabla l_i(q)}}{\norm*{\nabla l_i(p)}}<\frac32,$$

(3) By continuity, $$\norm*{\nabla l_i(q)-\nabla l_i(p)-t\nabla^2l_i(\tau,\cdot)}<Ct^2,$$ and by taking projection onto the minor subspace, $$t\norm*{\proj_{T_p^{\sys\perp}\mathcal T}\nabla^2l_i(\tau,\cdot)}-Ct^2>0,$$

(4) By continuity, $$\max\{l_i(q)\}<\min_{\gamma\not\in S(p)}{l_\gamma(q)}.$$
$$\sys(p)+\left(r_p\|\nabla l_i\|\sin\theta_0+2Cr_p^2\right)<\min_{\gamma\not\in S(p)}{l_\gamma(q)}$$ for all $i$.

Condition (4) guarantees that we only need to consider the sum over $\gamma_i\in S(p)$ since the `tail' will be dominantly smaller. Given these constants and conditions, we again consider the following two cases:

\textbf{Case 1}: We have $\angle(\tau,T_p^{\sys\perp}\mathcal T)\le\theta_0$, then
$$\angle(\tau,\nabla l_i(p))\ge\frac{\pi}{2}-\theta_0,$$ for all $i\in I$, and therefore,

\begin{align*}
    e^{-\frac1Tl_i(q)}&>e^{-\frac1T\sys(p)}e^{-\frac1T(t\left\langle\nabla l_i(p),\tau\right\rangle+Ct^2)}\\
    &>e^{-\frac1T\sys(p)}e^{-\frac1T(t\|\nabla l_i(p)\|\sin\theta_0+Ct^2)}\\
    &>e^{-\frac1T\sys(p)}e^{-\frac1T(r_p\|\nabla l_i(p)\|\sin\theta_0+Cr_p^2)}\\
    &>e^{-\frac1T(\min_{\gamma\not\in S(p)}{l_\gamma(q)}-Cr_p^2)}.
\end{align*}
Take the projection of $\sum_ie^{-\frac1Tl_i(q)}\nabla l_i(q)$ onto the minor subspace, then we have, for a single $\nabla l_i(q)$,
\begin{align*}
    \norm*{\proj_{T_p^{\sys\perp}\mathcal T}\nabla l_i(q)}&>t\norm*{\proj_{T_p^{\sys\perp}\mathcal T}\nabla^2l_i(\tau,\cdot)}-Ct^2\\
    &\ge \rho T\norm*{\proj_{T_p^{\sys\perp}\mathcal T}\nabla^2l_i(\tau,\cdot)}-Cr_p\rho T=:C_1(p)T,
\end{align*}
and for the main part,
\begin{align*}
    &\norm*{\sum_ie^{-\frac1Tl_i(q)}\nabla l_i(q)}\\
    &\ge\norm*{\proj_{T_p^{\sys\perp}\mathcal T}\sum_ie^{-\frac1Tl_i(q)}\nabla l_i(q)}\\
    &\ge\sum_ie^{-\frac1Tl_i(q)}\norm*{\proj_{T_p^{\sys\perp}\mathcal T}(\nabla l_i(q))}\\
    &>\sum_ie^{-\frac1T\sys(p)}e^{-\frac1T(r_p\norm*{\nabla l_i(p)}\sin\theta_0+Cr_p^2)}\left(t\norm*{\proj_{T_p^{\sys\perp}\mathcal T}\nabla^2l_i(\tau,\cdot)}-Ct^2\right)\\
    &>e^{-\frac1T(\min_{\gamma\not\in S(p)}{l_\gamma(q)}-Cr_p^2)}C_1(p)T.
\end{align*}
We thus have the positivity of the norm of the gradient vector, as  the tail is dominantly smaller:
\begin{align*}
    \norm*{\sum_\gamma e^{-\frac1Tl_i(q)}\nabla l_i(q)}&\ge\norm*{\proj_{T_p^{\sys\perp}\mathcal T}\sum_ie^{-\frac1Tl_i(q)}\nabla l_i(q)}-\norm*{\sum_{\gamma\not\in S(p)} e^{-\frac1Tl_i(q)}\nabla l_i(q)}\\
    &>C_1(p)Te^{-\frac1T(\min_{\gamma\not\in S(p)}{l_\gamma(q)}-Cr_p^2)}-C_2(p)e^{-\frac1T \min_{\gamma\not\in S(p)}{l_\gamma(q)}}>0.
\end{align*}

\textbf{Case 2}: We have $\angle(\tau,T_p^{\sys\perp}\mathcal T)\ge\theta_0$.

By eutacticity, there exists $0<\theta_1<\frac12\pi$ such that for any $\tau\not\in T_p^{\sys\perp}\mathcal T$, $$C^{\sys}\left(\tau,\frac12\pi-\theta_1\right)\cap\{\nabla l_i\}\neq\emptyset.$$

Let $$J:=\left\{j:\nabla l_j(p)\in C^{\sys}\left(-\tau,\frac12\pi-\theta_1\right)\right\}$$
(note that $J$ is not short for $J(-\tau)$ in this case) and $$K:=\left\{k:\nabla l_k(p)\in C^{\sys}\left(\tau,\frac{\pi}{2}+\frac12\theta_1\right)\right\}.$$

By continuity, there exist constants $D_1<0$ and $D_2$, such that when we can make $r_p$ small enough, for any $q\in U_p=B(p,r_p)$:

(1) For $j\in J$, $$\left\langle\nabla l_j(q),\tau\right\rangle<D_1.$$

(2) For $k\in K$, $$\left\langle\nabla l_k(q),\tau\right\rangle>D_2.$$

(3) For $i\in (J\cup K)^c$, $$\left\langle\nabla l_i(q),\tau\right\rangle<0.$$

(4) $D_1<D_2$.

For any $j\in J$ and $k\in K$, we have
\begin{align*}
    \frac{\norm*{e^{-\frac1Tl_k(q)}\nabla l_k(q)}}{\norm*{e^{-\frac1Tl_j(q)}\nabla l_j(q)}}&<\frac{\frac32e^{-\frac1Tl_k(q)}\norm*{\nabla l_k(p)}}{\frac12e^{-\frac1Tl_j(q)}\norm*{\nabla l_j(p)}}\\
    &=3e^{-\frac1T(l_k(q)-l_j(q))}\frac{\norm*{\nabla l_k(p)}}{\norm*{\nabla l_j(p)}}\\
    &<3e^{\frac1T(-t(\left\langle\nabla l_k(p),\tau\right\rangle-\left\langle\nabla l_j(p),\tau\right\rangle)+2Ct^2)}\frac{\|\nabla l_k(p)\|}{\|\nabla l_j(p)\|}\\
    &<3e^{\frac1T(t(D_1-D_2)+2Ct^2)}\frac{\norm*{\nabla l_k(p)}}{\norm*{\nabla l_j(p)}}\\
    &<3e^{\rho(D_1-D_2)+2Cr_p}\frac{\norm*{\nabla l_k(p)}}{\norm*{\nabla l_j(p)}}<\epsilon,
\end{align*}
where $\epsilon<\frac1r$, if $\rho$ is sufficiently large.

We consider the projection of $\sum_ie^{-\frac1Tl_i(q)}\nabla l_i(q)$ onto the major subspace. For any $\nabla l_j(p)\in C^{\sys}\left(-\tau,\frac12\pi-\theta_1\right)$,
\begin{align*}
    -\left\langle e^{-\frac1Tl_j(q)}\nabla l_j(q),\tau\right\rangle&>-e^{-\frac1T\sys(p)}e^{-\frac1T(l_j(q)-l_j(p))}\left\langle\nabla l_j(q),\tau\right\rangle\\
    &>-D_1e^{-\frac1T\sys(p)}e^{\frac1T(-t\left\langle\nabla l_j,\tau\right\rangle-Ct^2)}\\
    &>-D_1e^{-\frac1T\sys(p)}e^{\frac1T(-D_1\rho T-Cr_p\rho T)}\\
    &=-D_1e^{-\frac1T\sys(p)}e^{\rho(-D_1-Cr_p)}.
\end{align*}

Note that for any $i\in(J\cup K)^c$, $\nabla l_i(q)$ has a negative projection onto $\tau$. Therefore,
\begin{align*}
    \norm*{\sum_Ie^{-\frac1Tl_i(q)}\nabla l_i(q)}&>-\left\langle\sum_{K^c}e^{-\frac1Tl_i(q)}\nabla l_i(q),\tau\right\rangle-\norm*{\sum_Ke^{-\frac1Tl_k(q)}\nabla l_k(q)}\\
    &\ge-\left\langle\sum_Je^{-\frac1Tl_j(q)}\nabla l_j(q),\tau\right\rangle-\norm*{\sum_Ke^{-\frac1Tl_k(q)}\nabla l_k(q)}\\
    &\ge-(1-r\epsilon)\left\langle e^{-\frac1Tl_j(q)}\nabla l_j(q),\tau\right\rangle.
\end{align*}

With the tail,
\begin{align*}
    \norm*{\sum_\gamma e^{-\frac1Tl_i(q)}\nabla l_i(q)}&>-(1-r\epsilon)\left\langle e^{-\frac1Tl_j(q)}\nabla l_j(q),\tau\right\rangle-\norm*{\sum_{\gamma\not\in S(p)} e^{-\frac1Tl_i(q)}\nabla l_i(q)}\\
    &>-(1-r\epsilon)D_1e^{-\frac1T\sys(p)}e^{\rho(-D_1-Cr_p)}-C_2(p)e^{-\frac1T \min_{\gamma\not\in S(p)}{l_\gamma(q)}}\\
    &>C_3(p)e^{-\frac1T\sys(p)}-C_2(p)e^{-\frac1T \min_{\gamma\not\in S(p)}{l_\gamma(q)}}>0.
\end{align*}
The theorem follows.
\end{proof}
\begin{remark}
Let $p_1,p_2$ be two critical points. Although it seems not feasible to calculate $r_i,i=1,2$, but we have $$d(p_1,p_2)>\max\{r_{p_1},r_{p_2}\}.$$
\end{remark}

\subsection{Nondegeneracy and local uniqueness of critical points}
\label{nondegeneracy}

With the same notations above, the Hessian $\widetilde{H}_{\syst}$ of $\syst$ on the tangent vector field $\tau=u'$ for any geodesic $u$ is given by
\begin{align*}
\widetilde{H}_T(\tau)&:=T\left(\sum_\gamma e^{-{\frac1T}l_\gamma}\right)^2\widetilde{H}_{\syst}(\tau,\tau)\\ &= \left(\sum_\gamma e^{-{\frac1T}l_\gamma}\left\langle\nabla l_\gamma,\tau\right\rangle\right)^2-\left(\sum_\gamma e^{-{\frac1T}l_\gamma}\left\langle\nabla l_\gamma,\tau\right\rangle^2\right)\left(\sum_\gamma e^{-{\frac1T}l_\gamma}\right)\\
& +T\left(\sum_\gamma e^{-{\frac1T}l_\gamma}\right)\left(\sum_\gamma e^{-{\frac1T}l_\gamma}\nabla^2l_\gamma(\tau,\tau)\right).
\end{align*}

Let $H_T$ be the `main part' of $\tilde{H}_T$, i.e., where every sum above is over $S(p)$ instead of all simple closed geodesics. By Theorem \ref{lengthgradientnorm} and a similar calculation as in Lemma \ref{C1tail} and Theorem \ref{main},
\begin{lemma}
\label{C2tail}
We have
\begin{align*}
    \left|(\widetilde{H}_T)_p-(H_T)_p\right|< c'''e^{-\frac1T(\sys(p)+\secsys(p))}.
\end{align*}
\end{lemma}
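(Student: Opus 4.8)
The plan is to expand each of the three terms of $\widetilde H_T$ as a product of sums over \emph{all} simple closed geodesics, split every such sum \emph{at the point $p$} into its minimal part, indexed by $S(p)$, and its tail part, indexed by $\gamma\notin S(p)$, and then recognize $(\widetilde H_T)_p-(H_T)_p$ as a finite sum of products in which at least one factor is a tail sum. Because we evaluate at $p$, the minimal terms carry the exact exponent $l_\gamma(p)=\sys(p)$, and each tail term carries an exponent $l_\gamma(p)\ge\secsys(p)$, which is exactly what produces the claimed bound.

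First I would record the two size estimates needed, both at $p$. Since $p$ lies in the thick part, the minimal sums $\sum_{\gamma\in S(p)}e^{-\frac1T l_\gamma(p)}(\cdots)$ are $\Theta(e^{-\frac1T\sys(p)})$: the count $r=\#S(p)$ is finite and $\|\nabla l_\gamma(p)\|$, $|\nabla^2 l_\gamma(p)(\tau,\tau)|$ are bounded (by Theorem \ref{lengthgradientnorm}, or simply by finiteness). For the tail sums I would repeat the dyadic estimate already used in Theorem \ref{C0} and Lemma \ref{C1tail}: writing $\sum_{\gamma\notin S(p)}=\sum_{n\ge\secsys(p)}\sum_{n\le l_\gamma<n+1}$, bounding the count of geodesics in each band by $c_X(n+1)\le ce^{n+1}$, and bounding $\|\nabla l_\gamma\|$ and $\nabla^2 l_\gamma$ by the $e^{l_\gamma/2}$-type factors of Theorem \ref{lengthgradientnorm}, one gets that each of the four tail sums \[ \sum_{\gamma\notin S(p)}e^{-\frac1T l_\gamma(p)},\quad \sum_{\gamma\notin S(p)}e^{-\frac1T l_\gamma(p)}\|\nabla l_\gamma(p)\|,\quad \sum_{\gamma\notin S(p)}e^{-\frac1T l_\gamma(p)}\|\nabla l_\gamma(p)\|^2,\quad \sum_{\gamma\notin S(p)}e^{-\frac1T l_\gamma(p)}|\nabla^2 l_\gamma(p)(\tau,\tau)| \] is bounded by a constant depending only on $(g,n)$ times $e^{-\frac1T\secsys(p)}$, provided $T$ is below a fixed threshold so that the geometric ratios in the dyadic sum stay $<1$ (the worst ratio comes from the quadratic-gradient sum, and $T<\tfrac13$ is more than enough, exactly as in Lemma \ref{C1tail}).

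Now I would substitute. Each of the three terms in $\widetilde H_T$ is, up to sign and up to the factor $T<1$ in the third term, a product $A\cdot B$ of two sums over all $\gamma$ (the first term being the square $A\cdot A$ with $A=\sum_\gamma e^{-\frac1T l_\gamma}\langle\nabla l_\gamma,\tau\rangle$). Writing $A=A_0+A_1$, $B=B_0+B_1$ with subscript $0$ the $S(p)$-part and $1$ the tail part, one has $AB-A_0B_0=A_0B_1+A_1B_0+A_1B_1$; by the two estimates above $|A_0|,|B_0|=O(e^{-\frac1T\sys(p)})$ and $|A_1|,|B_1|=O(e^{-\frac1T\secsys(p)})$, so $A_0B_1+A_1B_0=O(e^{-\frac1T(\sys(p)+\secsys(p))})$ and $A_1B_1=O(e^{-\frac2T\secsys(p)})$, which is even smaller since $\secsys(p)>\sys(p)$. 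Adding up the contributions of the three terms (the factor $T<1$ only improves the third) yields $|(\widetilde H_T)_p-(H_T)_p|<c'''e^{-\frac1T(\sys(p)+\secsys(p))}$ for a constant $c'''$ depending only on $(g,n)$, and completes together with Lemmas \ref{C1tail} the proof of $C^2$-continuity promised in Theorem \ref{C0}. The main obstacle is purely bookkeeping: correctly enumerating which tail sums occur — the quadratic-in-gradient sum and the Hessian sum are new compared with Lemma \ref{C1tail} — and checking that the Wolpert bounds in Theorem \ref{lengthgradientnorm} make each of them converge geometrically below a fixed $T$; there is no conceptual difficulty beyond what already appears in the proof of Lemma \ref{C1tail}.
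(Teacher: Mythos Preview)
Your proposal is correct and follows essentially the same route as the paper: bound the new tail sum $\sum_{\gamma\notin S(p)}e^{-\frac1T l_\gamma}\nabla^2 l_\gamma(\tau,\tau)$ via Theorem~\ref{lengthgradientnorm}, then combine with the already-established tail bounds of Theorem~\ref{C0} and Lemma~\ref{C1tail} through the product splitting $AB-A_0B_0=A_0B_1+A_1B_0+A_1B_1$. The paper's proof is simply a one-line version of exactly this, leaving the product decomposition and the enumeration of tail sums implicit; your write-up makes those steps explicit.
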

\begin{proof}
    Note that
    \begin{align*}
        \sum_\gamma e^{-{\frac1T}l_\gamma}\nabla^2l_\gamma(\tau,\tau)\le c\sum_\gamma e^{-{\frac1T}l_\gamma}(1+l_\gamma e^{\frac{l_\gamma}{2}})\le c(p)e^{-\frac1T\secsys(p)},
    \end{align*}
    then the lemma follows from Theorem \ref{C0} and Lemma \ref{C1tail}.
\end{proof}
We would like nondegeneracy of $\widetilde{H}_T$ at critical points of $\syst$, but as an observation we first have the following.
\begin{lemma}
When $T$ is sufficiently small, $(\widetilde{H}_T)_p\colon T_p\mathcal T\to\mathbb R$ is nondegenerate.
\end{lemma}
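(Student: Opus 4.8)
The plan is to replace $(\widetilde H_T)_p$ by its main part $(H_T)_p$ using Lemma \ref{C2tail}, extract the common factor $e^{-\frac{2}{T}\sys(p)}$ (at $p$ every shortest geodesic has length $\sys(p)$), and analyze the remaining one-parameter family of quadratic forms by decomposing $T_p\mathcal T = T_p^{\sys}\mathcal T\oplus T_p^{\sys\perp}\mathcal T$. Concretely, with $r=\#S(p)$ and all sums over $\gamma_i\in S(p)$, one has $(H_T)_p = e^{-\frac{2}{T}\sys(p)}(Q_0+TQ_1)$ where $Q_0(\tau)=\bigl(\sum_i\langle\nabla l_i,\tau\rangle\bigr)^2-r\sum_i\langle\nabla l_i,\tau\rangle^2$ and $Q_1(\tau)=r\sum_i\nabla^2 l_i(\tau,\tau)$.

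First I would record the two structural facts about these forms. By Cauchy--Schwarz $Q_0\le 0$; since each $\nabla l_i\in T_p^{\sys}\mathcal T$, the bilinear form $Q_0$ vanishes as soon as one argument lies in $T_p^{\sys\perp}\mathcal T$; and, using the eutactic relation $\sum_i a_i\nabla l_i(p)=0$ with all $a_i>0$, equality $Q_0(v)=0$ on $T_p^{\sys}\mathcal T$ forces all the numbers $\langle\nabla l_i,v\rangle$ to be equal, hence (pairing $v$ with $\sum_i a_i\nabla l_i$) to vanish, hence $v=0$; so $Q_0$ is negative definite on $T_p^{\sys}\mathcal T$ with kernel exactly $T_p^{\sys\perp}\mathcal T$. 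Meanwhile Wolpert's convexity (Theorem \ref{convex}) gives $\nabla^2 l_i(\tau,\tau)>0$ for $\tau\neq0$, so $Q_1$ is positive definite on all of $T_p\mathcal T$.

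Then I would prove that $A_T:=Q_0+TQ_1$ is nondegenerate for small $T$ by a block/Schur-complement computation along $V:=T_p^{\sys}\mathcal T$ and $W:=T_p^{\sys\perp}\mathcal T$. The $V$-block $Q_0|_V+TQ_1|_V$ is negative definite, hence invertible, once $T$ is small; the off-diagonal block $TQ_1|_{V\times W}$ and the $W$-block $TQ_1|_W$ are of order $T$ (there is no $Q_0$ contribution off $V$); so the Schur complement of the $V$-block equals $TQ_1|_W+O(T^2)$, which is positive definite because $Q_1|_W$ is. Hence $A_T$ is nondegenerate, congruent to a diagonal form with $d:=\dim T_p^{\sys}\mathcal T$ negative eigenvalues of size $\asymp 1$ and $n-d$ positive eigenvalues of size $\asymp T$ --- in particular its smallest eigenvalue in absolute value is of order $T$, and its index is $d$, matching Main Theorem \ref{main2}.

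Finally I would restore the tail: Lemma \ref{C2tail} gives $(\widetilde H_T)_p = e^{-\frac{2}{T}\sys(p)}\bigl(A_T+O(e^{\frac{1}{T}(\sys(p)-\secsys(p))})\bigr)$, and since $\secsys(p)>\sys(p)$ the correction is exponentially small in $1/T$, hence far below the order-$T$ bottom eigenvalue of $A_T$; so $(\widetilde H_T)_p$ is nondegenerate, with the same signature, for all sufficiently small $T$. The main obstacle is the minor-subspace direction: there $Q_0$ vanishes identically and nondegeneracy rests entirely on the subleading term $TQ_1|_W$, so the whole argument turns on checking that neither the off-diagonal coupling nor the $O(T^2)$ Schur-complement error can absorb the $O(T)$ positivity supplied by convexity --- which is exactly what the block bookkeeping controls, together with the genuinely exponential (not merely polynomial in $T$) decay of the tail guaranteed by $\secsys(p)>\sys(p)$.
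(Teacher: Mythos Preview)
Your proposal is correct and follows essentially the same approach as the paper: decompose $T_p\mathcal T=T_p^{\sys}\mathcal T\oplus T_p^{\sys\perp}\mathcal T$, use Cauchy--Schwarz plus eutacticity to get negativity on the major subspace, Wolpert's convexity for positivity on the minor subspace, and Lemma~\ref{C2tail} to absorb the tail. The only technical difference is that the paper simply checks the sign of $(\widetilde H_T)_p$ directly on each of the two subspaces (and implicitly invokes the standard signature argument), whereas you run a Schur-complement computation on $Q_0+TQ_1$; your version is slightly more elaborate but yields the explicit eigenvalue scales $\asymp 1$ and $\asymp T$, which the paper does not state.
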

\begin{proof}
Recall that $r=\#S(X)$. Since all $\gamma_i$'s attain the same value at $p$, the above can be simplified as $$(H_T)_p(\tau)=-e^{-\frac2T\sys(p)}\left( r\sum_i\left<\nabla l_i,\tau\right>^2-\left(\sum_i\left\langle\nabla l_i,\tau\right\rangle\right)^2-Tr\sum_i\nabla l_i^2(p)(\tau,\tau)\right).$$

If $\tau\in T_p^{\sys\perp}\mathcal T$, then $$(\widetilde{H}_T)_p(\tau)>Tre^{-\frac2T\sys(p)}\sum_i\nabla^2l_i(p)(\tau,\tau)-C_pe^{-\frac1T(\sys(p)+\secsys(p))}>0.$$

If $\tau\in T_p^{\sys}\mathcal T$, by part (5) of Lemma \ref{fan} and Cauchy's inequality,
$$r\sum_i\left\langle\nabla l_i,\tau\right\rangle^2-\left(\sum_i\left\langle\nabla l_i,\tau\right\rangle\right)^2>0.$$

Similarly to part (4) in Lemma \ref{fan}, there exists $D>0$ (we may assume the same $D$ for both Lemma \ref{fan} and here for simplicity), such that $$\min_i\{\left\langle\nabla l_i(p),\tau\right\rangle\}<-D.$$

Let $$I^+=\{i:\left\langle\nabla l_i,\tau\right\rangle\ge 0\},$$ $$I^-=\{i:\left\langle\nabla l_i,\tau\right\rangle< 0\},$$ and also $r_1=\# I^+$ and $r_2=\# I^-$, then we have
\begin{align*}
& r\sum_i\left\langle\nabla l_i,\tau\right\rangle^2-\left(\sum_i\left\langle\nabla l_i,\tau\right\rangle\right)^2\\
& = r\left(\sum_{i\in I^+}\left\langle\nabla l_i,\tau\right\rangle^2 +\sum_{i\in I^-}\left\langle\nabla l_i,\tau\right\rangle^2\right) - \left(\sum_{i\in I^+}\left\langle\nabla l_i,\tau\right\rangle+\sum_{i\in I^+}\left\langle\nabla l_i,\tau\right\rangle\right)^2\\
& = \left(r_1\sum_{i\in I^+}\left\langle\nabla l_i,\tau\right\rangle^2-\left(\sum_{i\in I^+}\left\langle\nabla l_i,\tau\right\rangle\right)^2\right) + \left(r_2\sum_{i\in I^-}\left\langle\nabla l_i,\tau\right\rangle^2-\left(\sum_{i\in I^-}\left\langle\nabla l_i,\tau\right\rangle\right)^2\right)\\
& +r_2\sum_{i\in I^+}\left\langle\nabla l_i,\tau\right\rangle^2 + r_1\sum_{i\in I^-}\left\langle\nabla l_i,\tau\right\rangle^2 -2\left(\sum_{i\in I^+}\left\langle\nabla l_i,\tau\right\rangle\right)\left(\sum_{i\in I^-}\left\langle\nabla l_i,\tau\right\rangle\right)\\
& \ge r_2\sum_{i\in I^+}\left\langle\nabla l_i,\tau\right\rangle^2 + r_1\sum_{i\in I^-}\left\langle\nabla l_i,\tau\right\rangle^2 -2\left(\sum_{i\in I^+}\left\langle\nabla l_i,\tau\right\rangle\right)\left(\sum_{i\in I^-}\left\langle\nabla l_i,\tau\right\rangle\right)\\
& \ge r_2D^2+r_1D^2+2D^2\ge4D^2.
\end{align*}

Therefore, going back to $(H_T)_p$, we have
$$(H_T)_p(\tau)\le -e^{-\frac2T\sys(p)}\left( 4D^2-Tr^2M\right),$$
and
\begin{align*}
    (\widetilde{H}_T)_p(\tau) & < (H_T)_p(\tau)+C_pe^{-\frac1T(\sys(p)+\secsys(p))}\\
    &\le -e^{-\frac2T\sys(p)}\left( 4D^2-Tr^2M\right)+C_pe^{-\frac1T(\sys(p)+\secsys(p))}
\end{align*}
which is negative when $T$ is sufficiently small as $\secsys>\sys$.

Therefore, when $T$ is small enough, $H_T(\tau)<0$.
\end{proof}
Given a critical point $p_T$ of $\syst$, the Hessian can be simplified as
$$(\widetilde{H}_T)_{p_T}(\tau) = \left(\sum_\gamma e^{-{\frac1T}l_\gamma}\right)\left(\sum_\gamma e^{-{\frac1T}l_\gamma}\left(-\left\langle\nabla l_\gamma,\tau\right\rangle^2 +T\nabla^2l_\gamma(\tau,\tau)\right)\right).$$
\begin{theorem}
Let $p_T$ be a critical point (no uniqueness established yet) for $\syst$ as in Theorem \ref{main} when $T$ is sufficiently small, then $(\widetilde{H}_T)_{p_T}\colon T_{p_T}\mathcal T\to\mathbb R$ is nondegenerate.
\end{theorem}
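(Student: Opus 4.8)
The strategy is to follow the template used just above for the nondegeneracy of $(\widetilde H_T)_p$, while accounting for the one genuinely new feature: at $p_T$ the geodesics in $S(p)$ no longer share a common length, so the normalized weights $w_i$ (proportional to $e^{-l_i(p_T)/T}$ and summing to $1$ over $i\in S(p)$) are no longer all equal to $1/r$. What rescues the argument is scale. By Theorem \ref{main}, $p_T$ lies in the $\rho T$-ball about $p$, so $d(p,p_T)<\rho T$, and hence, uniformly over $i\in S(p)$,
\[l_i(p_T)-\sys(p)=O(T),\qquad \|\nabla l_i(p_T)-\nabla l_i(p)\|=O(T);\]
in particular each $w_i$ stays in a fixed compact subinterval of $(0,1)$ independent of $T$, and $\sum_{i\in S(p)}e^{-l_i(p_T)/T}\asymp e^{-\sys(p)/T}$. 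First I would discard the tail: by the same growth-rate estimate as in Lemma \ref{C2tail}, now applied at $p_T$, one gets $|(\widetilde H_T)_{p_T}-(H_T)_{p_T}|<c''''e^{-(\sys(p)+\secsys(p))/T}$, which after dividing by the positive factor $\bigl(\sum_{i\in S(p)}e^{-l_i(p_T)/T}\bigr)^2\asymp e^{-2\sys(p)/T}$ is a perturbation of normalized size $o(T)$ (indeed exponentially small in $1/T$, since $\secsys(p)>\sys(p)$). Thus it suffices to show that the quadratic form
\[Q_0(\tau)=\sum_{i\in S(p)}w_i\Bigl(-\langle\nabla l_i(p_T),\tau\rangle^2+T\,\nabla^2 l_i(p_T)(\tau,\tau)\Bigr)\]
on $T_{p_T}\mathcal T$ is nondegenerate, with spectral gaps that survive an $o(T)$ perturbation.

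Next I would decompose $T_{p_T}\mathcal T=V\oplus V^\perp$ with $V:=T_p^{\sys}\mathcal T$ (read off in the normal-coordinate chart at $p$, where all metric quantities at $p_T$ differ from those at $p$ by $O(T)$), $\dim V=d$, and estimate $Q_0$ block by block. On $V^\perp$: since $\nabla l_i(p)\perp V^\perp$, for a unit $\tau\in V^\perp$ we have $\langle\nabla l_i(p_T),\tau\rangle=\langle\nabla l_i(p_T)-\nabla l_i(p),\tau\rangle=O(T)$, so the negative terms contribute only $O(T^2)$, while strict convexity (Theorem \ref{convex}) and compactness give a uniform lower bound $\nabla^2 l_i(p_T)(\tau,\tau)\ge m_0>0$; hence $Q_0|_{V^\perp}\ge\tfrac12 m_0 T\|\tau\|^2$ is positive definite with gap of order $T$. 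On $V$: transporting part (4) of Lemma \ref{fan} from $p$ to $p_T$ (replacing $D$ by $D/2$), for every unit $\tau\in V$ some $j$ satisfies $|\langle\nabla l_j(p_T),\tau\rangle|>D/2$, so $\sum_i w_i\langle\nabla l_i(p_T),\tau\rangle^2$ is bounded below by a positive constant uniform in $T$; since the Hessian terms are $O(T)$, $Q_0|_V$ is negative definite with gap bounded away from $0$ independently of $T$. Finally the coupling $B_{Q_0}(\tau_V,\tau_\perp)$ is $O(T)$, as each summand is either a bounded factor $\langle\nabla l_i,\tau_V\rangle$ times the small factor $\langle\nabla l_i,\tau_\perp\rangle=O(T)$, or an $O(T)$ Hessian term.

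With these three estimates the conclusion follows by a Schur-complement computation: writing $Q_0$ in block form $\bigl(\begin{smallmatrix}-A&B\\ B^{\mathsf{T}}&TC\end{smallmatrix}\bigr)$ with $A\succeq\alpha I$, $C\succeq\beta I$ for fixed $\alpha,\beta>0$ and $\|B\|=O(T)$, the Schur complement $TC+B^{\mathsf{T}}A^{-1}B\succeq\beta T I\succ0$ is positive definite once $T$ is small, so $Q_0$ is nondegenerate, and by inertia additivity over the blocks it has exactly $d$ negative eigenvalues — i.e.\ $(\widetilde H_T)_{p_T}$ has Morse index $d=\rank\{\nabla l_i\}_{i\in S(p)}$. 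Re-adding the $o(T)$ tail term cannot close any of these gaps, so $(\widetilde H_T)_{p_T}$ is nondegenerate for $T$ sufficiently small. The step needing real care — and the reason one cannot simply perturb the already-known nondegenerate form $(\widetilde H_T)_p$ — is that the two diagonal blocks of $Q_0$ live at different scales, order $1$ on the major subspace versus order $T$ on the minor subspace; the argument must use quantitatively that the coupling is only $O(T)=o(\sqrt T)$ and that the drift of each $\nabla l_i$ into the minor subspace is first order in $d(p,p_T)=O(T)$.
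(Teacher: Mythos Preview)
Your argument is correct and follows the same overall plan as the paper: discard the tail via Lemma \ref{C2tail}, transport the major/minor splitting $T_p^{\sys}\mathcal T\oplus T_p^{\sys\perp}\mathcal T$ from $p$ to $p_T$ using $d(p,p_T)<\rho T$, and check that the main part of the Hessian is negative definite on the major piece (via part (4) of Lemma \ref{fan}) and positive definite of order $T$ on the minor piece (via strict convexity and the $O(T)$ drift of $\nabla l_i$).

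The one genuine difference is in how you close. You additionally bound the off-diagonal block by $O(T)$ and run a Schur-complement argument to get nondegeneracy and index $d$. The paper skips this: it simply observes that the quadratic form is negative on the transported major subspace and positive on the transported minor subspace, and concludes. That is already enough --- it is an elementary linear-algebra fact that if a symmetric form $Q$ is negative definite on $V_1$ and positive definite on a complementary $V_2$, then $Q$ is nondegenerate with index $\dim V_1$ (if $B(v,\cdot)\equiv 0$ with $v=v_1+v_2$, then $0=B(v,v_1)$ and $0=B(v,v_2)$ force $Q(v_1)=Q(v_2)$, impossible unless $v=0$). So your worry in the last paragraph, that the two scales force one to control the coupling, is unfounded: once the restrictions to $V$ and $V^\perp$ are definite, no cross-term estimate is needed. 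Your Schur route does buy something, namely explicit spectral gaps (order $1$ on $V$, order $T$ on $V^\perp$) which make the stability under the $o(T)$ tail perturbation completely transparent; the paper's route is shorter but leaves that stability implicit.
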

\begin{proof}
Let $v_T=\ex_p^{-1}(p_T)\in T_p\mathcal T$, then $d(\ex_p)_{v_T}\colon T_{v_T}T_p\mathcal T\cong T_p\mathcal T\to T_{p_T}\mathcal T$ is an isomorphism when $T$ is small. By smoothness, let $C=C(p)>0$ satisfy $$\left|d(\ex_p)_{v_T}(\nabla l_i(p))-\nabla l_i(p_T)\right|<Cd(p,p_T)<C\rho T.$$

We also let $$\text{tail}=\sum_{\gamma\not\in S(p)} e^{-{\frac1T}l_\gamma}\left(-\left\langle\nabla l_\gamma,\tau\right\rangle^2 +T\nabla^2l_\gamma(\tau,\tau)\right),$$
and then $|\text{tail}|<C_pe^{-\frac1T\secsys(p)}$, similar as above.

Instead of the tangent space $T_{p_T}\mathcal T$ at $p_T$, we consider the push forward of the tangent subspaces $T_p^{\sys}\mathcal T$ and $T_p^{\sys\perp}\mathcal T$ at $p$. By continuity, results similar to part (4) of Lemma \ref{fan} and the bounds of $\{\|\nabla l_i(p_T)\|,\|\nabla^2l_i(p_T)(\tau,\cdot)\|\}_{i,\tau; T\le\epsilon}$ can be established. For simplicity, by abuse of notation, we still denote the bounds by $D,M,m$. We also assume $$l_i(p_T)<\frac12\left(\sys(p)+\secsys(p)\right)<\secsys(p),$$ for all $\gamma_i\in S(p)$ by making $T$ small enough.

When $\tau\in d\left(\ex_p\right)_{v_T}\left(T_p^{\sys}\mathcal T\right)$,
\begin{align*}
    &\left(\sum_\gamma e^{-{\frac1T}l_\gamma}\right)^{-1}\cdot (\widetilde{H}_T)_{p_T}(\tau)\\
    &\le -e^{-\frac1T\frac12(\sys(p)+\secsys(p))}(D^2-rTM) + C_pe^{-\frac1T\secsys(p)}<0,
\end{align*}
when $T$ is sufficiently small.

When $\tau\in d\left(\ex_p\right)_{v_T}\left(T_p^{\sys\perp}\mathcal T\right)$,
\begin{align*}
    &\left(\sum_\gamma e^{-{\frac1T}l_\gamma}\right)^{-1}\cdot (\widetilde{H}_T)_{p_T}(\tau)\\
    &\ge re^{-\frac1T\frac12(\sys(p)+\secsys(p))}(-(2C\rho T)^2+Tm) - C_pe^{-\frac1T\secsys(p)}>0,
\end{align*}
when $T$ is sufficiently small.

The theorem follows since $d(\ex_p)_{v_T}$ is an isomorphism.
\end{proof}
If we let the \textit{index} of a (nondegenerate) quadratic form be the number of negative eigenvalues, then we have also shown the following in the proof.
\begin{corollary}
\label{Hindex}
Let $p$ be a critical point for $\sys$, then $$\ind \widetilde{H}_{\syst}(p)=\ind \widetilde{H}_{\syst}(p_T)=(-1)^d=\ind_{\sys}(p).$$
\end{corollary}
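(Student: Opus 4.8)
The plan is to read off the corollary directly from the definiteness estimates already produced in the proofs of the two preceding results — nondegeneracy of $(\widetilde{H}_{T})_p$ and of $(\widetilde{H}_{T})_{p_T}$ — together with Akrout's identification of the systole index. The organizing device is an elementary linear-algebra fact that I would isolate first: if $Q$ is a nondegenerate symmetric bilinear form on an $n$-dimensional real vector space that is negative definite on a subspace $V$ of dimension $d$ and positive definite on a subspace $W$ of dimension $n-d$ with $V\oplus W$ the whole space, then $\ind Q=d$. The point is that $V$ already forces $\ind Q\ge d$, while any subspace on which $Q$ is negative definite meets $W$ only at the origin and hence has dimension at most $n-\dim W=d$, giving $\ind Q\le d$; crucially this uses only that $V$ and $W$ span, not that they are $Q$-orthogonal, so no extra orthogonality bookkeeping is needed.

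With that in hand the steps are short. First, by definition $d=\dim T_p^{\sys}\mathcal T=\rank\{\nabla l_i(p)\}_{\gamma_i\in S(p)}$, so Akrout's theorem immediately gives $\ind_{\sys}(p)=d$. Second, the proof that $(\widetilde{H}_{T})_p$ is nondegenerate in fact established, for $T$ small, that $(\widetilde{H}_{T})_p(\tau)<0$ for every nonzero $\tau\in T_p^{\sys}\mathcal T$ — the lower bound there ($4D^2$, from parts (4) and (5) of Lemma \ref{fan} together with Cauchy--Schwarz) dominates the $O(T)$ Hessian contribution, which is controlled via Lemma \ref{C2tail} — and that $(\widetilde{H}_{T})_p(\tau)>0$ for every nonzero $\tau\in T_p^{\sys\perp}\mathcal T$, using $\nabla^2 l_i>0$ from Theorem \ref{convex} and again Lemma \ref{C2tail}. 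Since $T_p^{\sys}\mathcal T$ and $T_p^{\sys\perp}\mathcal T$ are complementary of dimensions $d$ and $n-d$, the linear-algebra fact yields $\ind\widetilde{H}_{\syst}(p)=d$. Third, the same structure occurs in the nondegeneracy proof for $(\widetilde{H}_{T})_{p_T}$: it shows $(\widetilde{H}_{T})_{p_T}$ is negative definite on $d(\ex_p)_{v_T}(T_p^{\sys}\mathcal T)$ and positive definite on $d(\ex_p)_{v_T}(T_p^{\sys\perp}\mathcal T)$ for $T$ small, and since $d(\ex_p)_{v_T}$ is a linear isomorphism these images are still complementary of dimensions $d$ and $n-d$, so $\ind\widetilde{H}_{\syst}(p_T)=d$. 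Concatenating these equalities gives the corollary; it is also consistent with the gradient degree $(-1)^d$ of Theorems \ref{existence2} and \ref{main}, since the local degree of the renormalized gradient $\widetilde\Omega_T$ at a nondegenerate zero equals $(-1)^{j}$ with $j$ its Morse index.

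I do not expect a genuine obstacle here: the whole argument is a repackaging of definiteness that has already been proven, and its only subtle point is the linear-algebra lemma — namely that a pair of complementary definite subspaces pins the index exactly, with no $Q$-orthogonality required. The one bit of hygiene is to note that the various ``$T$ sufficiently small'' thresholds imported from the two preceding proofs, as well as the radius $\rho$ and the constant $\epsilon_1$, can be chosen uniformly in the critical point $p$; this is immediate because $\sys$ has only finitely many critical points on $\mgn$.
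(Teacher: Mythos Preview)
Your proposal is correct and follows the paper's own approach: the paper offers no separate proof, merely noting that the corollary was ``also shown in the proof'' of the two preceding nondegeneracy results, and you have simply unpacked those definiteness conclusions on $T_p^{\sys}\mathcal T$, $T_p^{\sys\perp}\mathcal T$ and their $d(\ex_p)_{v_T}$-images. Your explicit linear-algebra lemma (complementary definite subspaces pin the index without requiring $Q$-orthogonality) is a useful clarification that the paper leaves implicit, and it is genuinely needed at $p_T$ since the pushed-forward subspaces need not be Weil--Petersson orthogonal there.
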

\begin{theorem}[Uniqueness within $\rho T$-ball]
\label{unique}
When $T$ is sufficiently small, there exists a unique critical point $p_T$ within the $\rho T$-ball around $p$.
\end{theorem}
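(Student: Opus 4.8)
The plan is to convert the uniqueness statement into a zero-count for the renormalized gradient field $\widetilde\Omega_T$ inside the ball, using the nondegeneracy established just above together with the degree computation already in hand.

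First I would assemble the inputs. By Theorem~\ref{main} there is at least one critical point of $\syst$ in the open ball $B_{\rho T}:=\ex_p(B^n_{\rho T})$; by the preceding nonexistence theorem there are none at distance in $[\rho T,r_p]$ from $p$, so every critical point of $\syst$ in the fixed ball $U_p=B(p,r_p)$ in fact lies in $B_{\rho T}$; and the proof of Theorem~\ref{main} shows $\widetilde\Omega_T$ is nonvanishing on the sphere $S^{n-1}_{\rho T}$ with $\deg\bigl(\pi\circ\widetilde\Omega_T|_{S^{n-1}_{\rho T}}\bigr)=(-1)^d$. The extra observation I need is that \emph{every} critical point $q$ with $d(p,q)<\rho T$ is nondegenerate, not merely the one handed over by the existence argument: the proof of the nondegeneracy theorem used only $d(p,q)<\rho T$ together with the continuity bounds $D,M,m$ on $\|\nabla l_i\|$ and $\|\nabla^2 l_i(\tau,\cdot)\|$ over the fixed ball $U_p$, which are uniform in $q$. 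Hence the critical points in $B_{\rho T}$ are isolated; being contained in the compact set $\overline{B_{\rho T}}$ with none on its boundary, there are finitely many of them, say $q_1,\dots,q_k$ with $k\ge1$.

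Next I would compute the Poincar\'e--Hopf index of $\widetilde\Omega_T$ at each $q_j$. Working in normal coordinates at $p$ (with $\ex_p$ a diffeomorphism on $U_p$ for $T$ small), $\widetilde\Omega_T$ is a strictly positive scalar multiple of the Weil--Petersson gradient $\nabla\syst$, so the two fields have the same zeros, and at $q_j$ the linearization $D\widetilde\Omega_T(q_j)$ is a positive multiple of the covariant Hessian, i.e.\ of $(\widetilde H_T)_{q_j}$ up to a positive factor. By the nondegeneracy theorem (cf.\ Corollary~\ref{Hindex}), $(\widetilde H_T)_{q_j}$ is negative definite on the $d$-dimensional subspace $d(\ex_p)(T_p^{\sys}\mathcal T)$ and positive definite on the complementary $(n-d)$-dimensional subspace $d(\ex_p)(T_p^{\sys\perp}\mathcal T)$, so it has exactly $d$ negative eigenvalues and $\operatorname{sign}\det D\widetilde\Omega_T(q_j)=(-1)^d$, independently of $j$. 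The Poincar\'e--Hopf theorem for the region $B_{\rho T}$ then gives $k\cdot(-1)^d=\sum_{j=1}^k\operatorname{ind}_{q_j}\widetilde\Omega_T=\deg\bigl(\pi\circ\widetilde\Omega_T|_{S^{n-1}_{\rho T}}\bigr)=(-1)^d$, forcing $k=1$. Setting $p_T:=q_1$ finishes the proof.

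I expect the main obstacle to be precisely the \emph{uniformity} underlying the last two paragraphs: the cancellation in the index sum collapses only because all critical points in $B_{\rho T}$ contribute the \emph{same} sign $(-1)^d$, which requires the nondegeneracy estimates to hold at every critical point in the ball with constants that do not depend on which critical point is considered --- this is exactly why those estimates were set up via continuity on the fixed ball $U_p$ rather than around a single $p_T$. The rest is bookkeeping: take $T$ below the minimum of the finitely many thresholds appearing in Theorem~\ref{main}, the nonexistence theorem, the nondegeneracy theorem, and the requirement that $\ex_p$ be a diffeomorphism on $U_p$, so that all the cited conclusions hold simultaneously. (Alternatively one could avoid degree theory and argue by monotonicity along the Weil--Petersson geodesic between two putative critical points, splitting its differential effect into the major directions --- where the soft-min structure yields a strictly increasing profile as in Lemma~\ref{linearity} --- and the minor directions --- where $\syst$ is strictly convex; but the degree argument is shorter and reuses machinery already in place.)
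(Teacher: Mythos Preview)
Your proposal is correct and follows essentially the same approach as the paper: use the degree computation on $S^{n-1}_{\rho T}$ together with the fact that every critical point in the ball is nondegenerate with Hessian index $d$, so Poincar\'e--Hopf forces the number of critical points times $(-1)^d$ to equal $(-1)^d$. Your write-up is in fact more careful than the paper's about the uniformity issue (that the nondegeneracy estimates apply to \emph{any} critical point in $B_{\rho T}$, not just a distinguished one) and about finiteness of the zero set.
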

\begin{proof}
Let $p_i,i\in I$ be all the critical points in the interior of the $\rho T$-sphere described above. Consider the gradient vector field $\nabla\syst$ on the interior of $S_{\rho T}$ that is nonzero on the boundary. Note that $$\ind_{p_i}(\nabla\syst)=\ind \widetilde{H}_{\syst}(p_i).$$

Therefore, when $T$ is sufficiently small, by Theorem \ref{existence2}, we have
\begin{align*}
(-1)^d&=\deg(\nabla\syst|_{S_{\rho T}})=\sum_I\ind_{p_i}\left(\nabla\syst\right)\\
&=\sum_I\ind \widetilde{H}_{\syst}(p_i)=\#I\cdot(-1)^d,
\end{align*}
which implies $\#I=1$, i.e., the critical point near $p$ is unique.
\end{proof}
The theorem eventually validates the definition we have used above:
\begin{definition}
Suppose $T$ is sufficiently small. Let $p_T$ be the unique critical point near $p$.
\end{definition}

\subsection{Regular points in $\mgn$}
\label{regular}
By Akrout's theorem, regular points for the systole function are exactly non-eutactic points, namely, biased and semi-eutactic points. We have the following local results for $\syst$.
\begin{theorem}[Biased points]
For any biased point $q\in\mgn$, there exist a neighborhood $V=V(q)$ of $q$ and $T_0=T_0(q)>0$ such that any $q'\in V$ is regular for $\syst$ for $T<T_0$.
\end{theorem}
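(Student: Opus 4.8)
The plan is to prove the stronger statement that the differential of $\syst$ does not vanish at any $q'$ close to $q$ once $T$ is small; since $\syst$ is $C^2$ (Theorem \ref{C0}), a point with nonvanishing differential is a submersion point, hence there $\syst$ is a coordinate function in a $C^2$ --- a fortiori $C^0$ --- chart, and is therefore regular. To exploit that $q$ is biased, recall the metric characterization: there are a unit vector $\tau_0\in T_q\mathcal T$ and a constant $\delta>0$ with $\langle\nabla l_i(q),\tau_0\rangle\ge\delta$ for every $\gamma_i\in S(q)$. The idea is simply to pair the gradient of $\syst$ against (the normal-coordinate image of) $\tau_0$ and show this pairing stays positive on a whole neighborhood of $q$.

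Work in normal coordinates at $q$, so that all the $\nabla l_\gamma$ and the renormalized gradient
\[
\widetilde\Omega_T(q'):=\sum_{\gamma}e^{-\frac1T(l_\gamma(q')-\sigma)}\nabla l_\gamma(q'),\qquad \sigma:=\sys(q),
\]
are regarded as $T_q\mathcal T$-valued near $q$ (as in the remark after Definition \ref{setup}); note $\nabla\syst(q')=0$ iff $\widetilde\Omega_T(q')=0$. First I would choose the neighborhood $V=V(q)$ and a constant $\epsilon_0>0$ so small that, for all $q'\in V$: (i) $\langle\nabla l_i(q'),\tau_0\rangle>\tfrac{\delta}{2}$ for every $\gamma_i\in S(q)$, by real-analyticity and hence continuity of the gradients (Theorem \ref{convex}); and (ii) $l_i(q')<\sigma+\tfrac{\epsilon_0}{2}$ for $\gamma_i\in S(q)$ while $l_\gamma(q')>\sigma+\epsilon_0$ for every $\gamma\notin S(q)$, by continuity of the length functions together with the fact that only finitely many closed geodesics have length below any fixed bound, exactly as for condition (4) in the nonexistence theorem above.

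Now split $\widetilde\Omega_T=\Omega_T+(\widetilde\Omega_T-\Omega_T)$ with $\Omega_T$ the partial sum over $\gamma_i\in S(q)$. By (i) and (ii), keeping only the shortest term among $S(q)$,
\[
\langle\Omega_T(q'),\tau_0\rangle=\sum_{\gamma_i\in S(q)}e^{-\frac1T(l_i(q')-\sigma)}\langle\nabla l_i(q'),\tau_0\rangle\ \ge\ \tfrac{\delta}{2}\,e^{-\frac{\epsilon_0}{2T}}.
\]
For the remainder, the growth-rate estimate from the proof of Lemma \ref{C1tail}, applied with the uniform threshold $\sigma+\epsilon_0$ of (ii) in place of $\secsys$ (and Theorem \ref{lengthgradientnorm} for the gradient norms), yields a constant $C=C(q)$ with
\[
\bigl\|\widetilde\Omega_T(q')-\Omega_T(q')\bigr\|\ \le\ e^{\frac{\sigma}{T}}\sum_{\gamma\notin S(q)}e^{-\frac1T l_\gamma(q')}\|\nabla l_\gamma(q')\|\ \le\ C\,e^{-\frac{\epsilon_0}{T}}.
\]
Since $\frac{\epsilon_0}{2T}<\frac{\epsilon_0}{T}$, there is $T_0=T_0(q)>0$ so that for every $T<T_0$ and every $q'\in V$
\[
\langle\widetilde\Omega_T(q'),\tau_0\rangle\ \ge\ \tfrac{\delta}{2}\,e^{-\frac{\epsilon_0}{2T}}-C\,e^{-\frac{\epsilon_0}{T}}\ >\ 0,
\]
so $\nabla\syst(q')\neq0$ and $q'$ is regular.

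The routine part is the continuity/compactness selection of $V$ and $\epsilon_0$; the one place that needs a word of care is the uniformity over $q'\in V$ of the tail bound, which however is automatic because the geodesic count $c_X(L)\le ce^L$ is universal and the threshold $\sigma+\epsilon_0$ in (ii) is uniform on a small $V$. Otherwise I do not expect a genuine obstacle: the argument is a one-direction specialization of the nonexistence-outside-the-$\rho T$-ball theorem, with the biasedness direction $\tau_0$ playing the role of the escape direction.
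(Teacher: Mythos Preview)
Your proposal is correct and follows essentially the same approach as the paper's proof: pick a direction $\tau_0$ in which all minimal gradients have positive pairing (the biased condition), extend this by continuity to a neighborhood together with a uniform length gap between $S(q)$ and its complement, then pair the rescaled gradient with $\tau_0$ and observe that the main part dominates the exponentially smaller tail. The only cosmetic differences are that the paper phrases the positivity via the cone $C(\tau,\tfrac{\pi}{2}-\theta_2)$ and the angle $\sin(\tfrac12\theta_2)$ rather than your direct lower bound $\delta/2$, and states the separation condition as $\max_i l_i(q')<\min_{\gamma\notin S(q)}l_\gamma(q')$ rather than with an explicit $\epsilon_0$.
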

\begin{proof}
Let $I$ be the index set for $S(q)$. Since $q$ does not satisfy the semi-eutactic condition, there exists $\tau\in T_q^{\sys}\mathcal T$ such that $\left\langle\nabla l_i(q),\tau\right\rangle>0$ for all $i\in I$, equivalently, $$\nabla l_i(q)\in C\left(\tau,\frac12\pi-\theta_2\right),$$ for some $0<\theta_2(q)<\frac12\pi$, where $C(\tau,\theta)$ is the cone in the tangent space $T_q\mathcal T$ centered at $\tau$ with angle $\theta$. Let $V$ be a neighborhood of $q$ such that for any $q'\in V$:

(1) $\nabla l_i(q')\in C\left(\tau,\frac12\pi-\frac12\theta_2\right)$,

(2) $\max\{l_i(q')\}<\min_{\gamma\not\in S(q)}\{l_\gamma(q')\}$.

Therefore, we have
\begin{align*}
    &\sum_\gamma e^{-\frac1Tl_\gamma(q')}\|\nabla\syst(q')\|=\left\|\sum_\gamma e^{-\frac1Tl_\gamma(q')}\nabla l_\gamma(q')\right\|\\
    &\ge \left\|\sum_i e^{-\frac1Tl_i(q')}\nabla l_i(q')\right\|-\left\|\sum_{\gamma\not\in S(q)} e^{-\frac1Tl_\gamma(q')}\nabla l_\gamma(q')\right\|\\
    &\ge \left\|\left\langle\sum_i e^{-\frac1Tl_i(q')}\nabla l_i(q'),\tau\right\rangle\right\|-\left\|\sum_{\gamma\not\in S(q)} e^{-\frac1Tl_\gamma(q')}\nabla l_\gamma(q')\right\|\\
    &>\sin(\frac12\theta_2)\sum_i\left\|\nabla l_i(q')\right\|e^{-\frac1T\max\{l_i(q')\}}-C(q)e^{-\frac1T\min_{\gamma\not\in S(q)}\{l_\gamma(q')\}}>0,
\end{align*}
for $T<T_0$ for appropriate $T_0>0$.
\end{proof}
We put a few lemmas after the following theorem, that are used in the proof.
\begin{theorem}[Semi-eutactic points]
For any semi-eutactic point $q\in\mgn$, there exist a neighborhood $V=V(q)$ of $q$ and $T_0=T_0(q)>0$ such that any $q'\in V$ is regular for $\syst$ for $T<T_0$.
\end{theorem}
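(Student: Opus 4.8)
The plan is to split into two cases, mirroring the biased-point argument but handling the degeneracy caused by the fact that the minimal gradients at a semi-eutactic point $q$ are only \emph{semi}-eutactic, i.e. the origin lies on the boundary of their convex hull. Write $S(q)=\{\gamma_i\}_{i\in I}$, and let $E\subset I$ index a maximal eutactic subset of the minimal gradients, so that $\spn\{\nabla l_i\}_{i\in E}$ is a proper subspace $W\subsetneq T_q^{\sys}\mathcal T$ on which $\{\nabla l_i\}_{i\in E}$ is genuinely eutactic, while the remaining gradients $\{\nabla l_i\}_{i\in I\setminus E}$ all have a strictly positive component in some direction orthogonal to $W$ (this is the content of the semi-eutacticity structure; compare the degenerate fan decomposition in the Remark after Lemma \ref{fan}). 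I would pick a unit vector $\tau_0\perp W$ inside $T_q^{\sys}\mathcal T$ with $\langle\nabla l_i(q),\tau_0\rangle\ge 0$ for all $i\in I$ and $\langle\nabla l_i(q),\tau_0\rangle>0$ for $i\in I\setminus E$; such $\tau_0$ exists precisely because $E$ is the maximal eutactic subset. The idea is then: along directions that see $\tau_0$, the surface looks ``biased'' and the gradient of $\syst$ is forced away from zero, exactly as in the biased-point theorem; along the remaining directions the surface looks like the eutactic case on $W$, where Theorem \ref{main} and the nonexistence theorem localize $\syst$-critical points to a tiny ball, which we then exclude by shrinking $V$.

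More concretely, choose $\theta_2=\theta_2(q)>0$ and a neighborhood $V$ of $q$ so that for every $q'\in V$: (i) $\langle\nabla l_i(q'),\tau_0\rangle>\tfrac12\langle\nabla l_i(q),\tau_0\rangle$ for all $i$, in particular $\ge -\epsilon$ for $i\in E$ and $\ge 2\delta>0$ for $i\in I\setminus E$ for suitable small $\epsilon,\delta$; (ii) $\max_i\{l_i(q')\}<\min_{\gamma\notin S(q)}\{l_\gamma(q')\}$ so the tail is dominated (using Lemma \ref{C1tail}); and (iii) a fan-decomposition estimate on $W$ of the form in Lemma \ref{fan}(4) holds uniformly, i.e. there is $D>0$ with $\min_{i}\langle\nabla l_i(q'),\tau\rangle<-D$ for every unit $\tau\in W$ (plus its neighborhood in $T_{q'}\mathcal T$, by the final Remark after Lemma \ref{fan}). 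Now fix $q'\in V$ and suppose, for contradiction, $\nabla\syst(q')=0$, equivalently $\widetilde\Omega_T(q')=0$. Projecting $\widetilde\Omega_T(q')$ onto $\tau_0$: the terms with $i\in I\setminus E$ contribute $\ge 2\delta\,e^{-\frac1T\max\{l_i(q')\}}$ each (positively), the terms with $i\in E$ contribute $\ge -\epsilon\,e^{-\frac1T l_i(q')}$, and the tail is $O(e^{-\frac1T\secsys(q)})$; so unless $I\setminus E=\emptyset$ the projection onto $\tau_0$ is strictly positive for $T$ small, contradicting $\widetilde\Omega_T(q')=0$. The only surviving possibility is $I\setminus E=\emptyset$, i.e.\ the minimal gradients of $q'$ lie in $W$; but then on $W$ we are in the genuinely eutactic situation, and by the nonexistence-outside-the-$\rho T$-ball theorem applied at the eutactic point $q$ (whose minimal-gradient span is $W$), together with the local uniqueness Theorem \ref{unique}, the only $\syst$-critical point in a fixed geodesic ball around $q$ lies within $\rho T$ of $q$; shrinking $V$ to lie outside that ball (radius $> \rho T$ for $T<T_0$) rules it out. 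Hence every $q'\in V$ is $\syst$-regular for $T<T_0$.

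The main obstacle I expect is making precise, and uniform over $q'\in V$, the decomposition ``$T_q^{\sys}\mathcal T = W \oplus (\text{biased directions})$'' and the associated degenerate fan decomposition — in particular verifying that the gradients indexed by $I\setminus E$ really do all lie strictly on one side of $W$, and that this strictness survives perturbation of the base point and the passage from $q$ to $q'$ (whose shortest-geodesic set may be a proper subset of $S(q)$). A secondary technical point is that at $q'$ the set $S(q')$ need not equal $S(q)$, so one must argue that the geodesics in $S(q)\setminus S(q')$ are still so short on $V$ (by continuity, condition (ii)) that they dominate the true tail $\sum_{\gamma\notin S(q)}$, and that dropping them only strengthens the sign of the $\tau_0$-projection since their gradients also have nonnegative $\tau_0$-component. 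Once the uniform degenerate fan decomposition is in hand, the rest is the same two-case $\tau_0$-projection-versus-$W$-localization bookkeeping already used for biased and eutactic points, with the error terms controlled by Lemma \ref{C1tail}.
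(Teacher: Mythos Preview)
Your $\tau_0$-projection estimate does not close. You bound the $E$-contributions by $\ge -\epsilon\, e^{-\frac1T l_i(q')}$ and the $(I\setminus E)$-contributions by $\ge 2\delta\, e^{-\frac1T\max_i l_i(q')}$ and then claim positivity, but the exponential weights are not comparable across $I$. For a generic direction $\tau$ from $q$ to $q'$, eutacticity of $\{\nabla l_i\}_{i\in E}$ on $W$ forces some $\gamma_j$, $j\in E$, to become strictly shortest at $q'$; then $e^{-\frac1T l_j(q')}$ dominates every $(I\setminus E)$-weight by the factor $e^{\frac1T(l_k(q')-l_j(q'))}$, which blows up as $T\to 0$ for fixed $q'\ne q$. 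Meanwhile $\langle\nabla l_j(q'),\tau_0\rangle=t\,\nabla^2 l_j(q)(\tau,\tau_0)+O(t^2)$ is a mixed Hessian term with no definite sign, so the $\tau_0$-projection of $\widetilde\Omega_T(q')$ can be negative. Shrinking $V$ makes both $\epsilon$ and $l_k(q')-l_j(q')$ of order $\mathrm{diam}(V)$, which yields regularity only for $T$ in an interval bounded away from $0$, not on all of $(0,T_0)$. Your fallback case is also not usable: $I\setminus E$ is nonempty by the very definition of semi-eutactic, $q$ is not eutactic so the existence/nonexistence theorems near an eutactic point do not apply to it, and even if they did, the $\rho T$-ball those theorems produce lies inside any fixed $V$ once $T$ is small --- you cannot ``shrink $V$ outside'' it while keeping $V$ independent of $T$.

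The paper's proof handles exactly this difficulty by splitting on the direction $\tau$ from $q$ to $q'$ rather than testing against one fixed $\tau_0$. When $\angle(\tau,T^{e\perp})\ge\theta_3$, some $l_j$ with $j\in I_e$ is strictly minimal at $q'$, its term dominates, and $\langle\widetilde\Omega_T(q'),\tau\rangle$ has a definite sign. When $\angle(\tau,T^{e\perp})\le\theta_3$, all $E$-lengths are comparable, and one projects onto a $\tau$-dependent direction built from the biased cone $\mathbf C=\bigcap_{k\in I_b}\mathbb H(\nabla l_k)$, using the convexity $\nabla^2 l_i(\tau,\cdot)$ to control the $E$-terms. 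The test direction must vary with $\tau$; no single direction works uniformly over $V$.
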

\begin{proof}

By Lemma \ref{sesplit} below, we split the index set $I$ of $S(q)$ into a maximal eutactic index subset $I_{e}$ and the biased index complement $I_{b}$. Let $V(q)$ be the geodesic ball $B(q,t_0)$ for $t_0>0$ to be determined. Recall that $T^{\sys}_q\mathcal T=\spn_{I}\{\nabla l_i\}$ (and let us use $T^{\sys}$ for short). We let $$T^{e}=\spn_{I_{e}}\{\nabla l_i\} \text{ and } T^{e\perp}=\left(T^e\right)^\perp.$$ Also recall that $\theta_0$ is chosen such that $\angle\left(\nabla^2l_i(\tau,\cdot),\tau\right)<\frac{\pi}{2}-2\theta_0$ for all $i$.

By continuity, we choose $\theta_3,\theta_4$ and $d$ small enough such that

(1) $\theta_3<\theta_0$ and $C_2\max\{\theta_3,C_1\theta_4\}<\theta_0$, where $C_1,C_2$ are fixed constants to be introduced in case 2(b),

(2) $\max_{i\in I_e,\angle(\tau,T^{e\perp})\le\theta_3}|\left\langle\nabla l_i,\tau\right\rangle|<d$,

(3) If $\left\langle\nabla l_k,\tau\right\rangle\ge -2d$, then $\angle(\nabla l_k,\tau)\le\frac{\pi}{2}+\theta_4$ for $k\in I_{b}$,

and to satisfy other conditions introduced in the proof.

\noindent
\textbf{Case 1}: We have $\angle(\tau,T^{e\perp})\ge\theta_3$ (that is, $\tau$ not perpendicular to $T^e$ with angle bound).

By the semi-eutactic condition, since $\tau\not\in T^{e\perp}$, we have $\overline{\mathbb H(\tau)}\cap\{\nabla l_i(q)\}\neq\emptyset$ and $\mathbb H(-\tau)\cap\{\nabla l_i(q)\}\neq\emptyset$. In fact, in the compact region of the $\tau$'s in this case, there exists $D>0$ such that

$$\min_i\left\langle\nabla l_i(q),\tau\right\rangle<-D.$$

Let $$J:=\{j:\left\langle\nabla l_j(q),\tau\right\rangle=\min_i\left\langle\nabla l_i(q),\tau\right\rangle\},$$ and $$K=\overline{\mathbb H(\tau)}.$$

For any $q'\in V(q)$, $j\in J$ and $k\in K$, we have
\begin{align*}
    \frac{\left\|e^{-\frac1Tl_k(q')}\nabla l_k(q')\right\|}{\left\|e^{-\frac1Tl_j(q')}\nabla l_j(q')\right\|}&=e^{-\frac1T(l_k(q')-l_j(q'))}\frac{\left\|\nabla l_k(q')\right\|}{\left\|\nabla l_j(q')\right\|}\\
    &=e^{\frac1T(t\left\langle\nabla l_j(q),\tau\right\rangle-t\left\langle\nabla l_k(q),\tau\right\rangle+O(t^2))}\frac{\left\|\nabla l_k(q')\right\|}{\left\|\nabla l_j(q')\right\|}\\
    &<e^{-\frac12\frac1Tt_0D}\frac{\left\|\nabla l_k(q')\right\|}{\left\|\nabla l_j(q')\right\|}<\epsilon,
\end{align*}
for $t_0$ small enough.

Pick a $j\in J$, and consider the sum over $K^c$,
\begin{align*}
    \left\|\sum_{K^c}e^{-\frac1Tl_j(q')}\nabla l_j(q')\right\|&\ge-\left\langle\sum_{K^c}e^{-\frac1Tl_j(q')}\nabla l_j(q'),\tau\right\rangle\\
    &\ge-\left\langle\sum_Je^{-\frac1Tl_j(q')}\nabla l_j(q'),\tau\right\rangle\\
    &\ge De^{-\frac1Tl_j(q')}
\end{align*}

Put all terms together,
\begin{align*}
    \left\|\sum_{S(q)}e^{-\frac1Tl_i(q')}\nabla l_i(q')\right\|&\ge \left\|\sum_{K^c}e^{-\frac1Tl_i(q')}\nabla l_i(q')\right\|\\
    &-\left\|\sum_Ke^{-\frac1Tl_i(q')}\nabla l_i(q')\right\|\\
    &\ge D\sum_Je^{-\frac1Tl_j(q')}-r\epsilon e^{-\frac1Tl_j(q')}\left\|\nabla l_j(q')\right\|\\
    &>(D-rM\epsilon)e^{-\frac1Tl_j(q')},
\end{align*}
where $M=\max_{i;q'}\|\nabla l_i(q')\|$. Note that setting $t_0$ small, the sum above is $O(e^{-\frac1T(\sys(q)+\epsilon)})$, and the `tail', the sum over $S(q)^c$, is bounded by $e^{-\frac1T(\secsys(q)-\epsilon)}=o(e^{-\frac1T(\sys(q)+\epsilon)})$. Therefore, $\nabla\syst(q')\neq0$.

\textbf{Case 2}: We have $\angle(\tau,T^{e\perp})\le\theta_3$ (that is, when $\tau$ almost perpendicular to $T^e$).

Let $$\mathbf C=\cap_{k\in I_{b}}\mathbb H_{T^{\sys}}(\nabla l_k)=\{\tau:\left\langle\nabla l_k,\tau\right\rangle>0\text{\ for\ all\ }k\}$$ be the polygonal cone in $T^{\sys}$ consisting of vectors with positive inner products with $\nabla l_k$ for all $k\in I_{b}$, which is nonempty and convex by biased condition. Note that $\mathbf C\times T^{\sys\perp}$ is the set of such vectors in the total tangent space. We fix a $v\in \mathbf C$ and make $t$ sufficiently small so that $\left\langle\nabla l_k(q'),v\right\rangle$ is positively bounded from below by continuity.

\textbf{Sub-case 2(a)}: In addition, $\angle(\nabla l_k,\tau)\le\frac{\pi}{2}+\theta_4$ for all $k\in I_{b}$.

Let $J:=\{j\in I_{b}:\frac{\pi}{2}\le\angle(\nabla l_j,\tau)\le\frac{\pi}{2}+\theta_4\}\subset I_{b}$ and $K:=I_{b}\setminus J$ be the complement.

Let $T^{e\perp\sys}$ be the orthogonal complement of $T^e$ in $T^{\sys}$, then $$\left(\partial \mathbf C\times T^{\sys\perp}\right)\cap T^{e\perp}=\left(\partial \mathbf C\cap T^{e\perp\sys}\right)\times T^{\sys\perp},$$ which is trivial if and only if $\partial \mathbf C\cap T^{e\perp\sys}=0$ and $T^{\sys\perp}=0$ (and thus $T^{e\perp\sys}=T^{e\perp}$). In this case $J=\emptyset$ if $\theta_3$ and $\theta_4$ are small enough. Otherwise, we would have $$\angle\left(\partial \mathbf C,T^{e\perp\sys}\right)\le\angle\left(\tau,\partial \mathbf C\right)+\angle\left(\tau,T^{e\perp}\right)\le C_1\theta_4+\theta_3$$ by Lemma \ref{convex proj}. Contradiction!

If $J=\emptyset$, then $\tau\in \mathbf C\times T^{\sys\perp}$. Note that in this situation $\mathbf C\cap T^{e\perp}$ is nonempty. Let $\tau'$ be the projection of $\tau$ onto $T^{e\perp}$, then $$\tau'\in \left(\mathbf C\times T^{\sys\perp}\right)\cap T^{e\perp}$$ by the convexity of $\mathbf C\times T^{\sys\perp}$ and $\angle(\tau,\tau')<\theta_3<\theta_0$. 

If $J\neq\emptyset$, let $\tau'$ be the projection of $\tau$ onto $(\mathbf C\times T^{\sys\perp})\cap T^{e\perp}$, which is nontrivial. By Lemma \ref{convex proj}, $$\angle\left(\tau,\partial \mathbf C\times T^{\sys\perp}\right)\le\angle\left(\tau,\partial \mathbf C\right)<C_1\theta_4$$ and since $\angle(\tau,T^{e\perp})\le\theta_3$, by Lemma \ref{intersection angle}, $$\angle\left(\tau,\tau'\right)<C_2\max\{\theta_3,C_1\theta_4\}<\theta_0.$$

Therefore, for $i\in I_e\cup J$, since
$\nabla l_i(q)\perp\tau'$, $$\left\langle\nabla l_i(q'),\tau'\right\rangle=\left\langle\nabla l_i(q')-\nabla l_i(q),\tau'\right\rangle=\left\langle t\nabla^2l_i(\tau,\cdot)+O_i(t^2),\tau'\right\rangle.$$

Consider the angle, when $t$ is sufficiently small,
\begin{align*}
    \angle\left(t\nabla^2l_i(\tau,\cdot)+O_i(t^2),\tau'\right) & \le\angle\left(t\nabla^2l_i(\tau,\cdot)+O_i(t^2),\tau\right)+\angle\left(\tau,\tau'\right)\\
    & \le\frac{\pi}{2}-\frac32\theta_0+\theta_0=\frac{\pi}{2}-\frac12\theta_0,
\end{align*}
Thus, $\left\langle\nabla l_i(q'),\tau'\right\rangle>0$.

For $k\in K$, note that $\left\langle\nabla l_k(q),\tau'\right\rangle>0$ since $\tau'\in \intr(\mathbf C\times T^{\sys\perp})$ if $J=\emptyset$ and $\tau'\in\partial \mathbf C\times T^{\sys\perp}$ if $J\neq\emptyset$, then
\begin{align*}
    \left\langle\nabla l_k(q'),\tau'\right\rangle &= \left\langle\nabla l_k(q),\tau'\right\rangle + \left\langle t\nabla^2 l_k(q)(\tau,\cdot)+O_k(t^2),\tau'\right\rangle>0
\end{align*}

where the second term is positive for the same reason as above.

Each term has a positive projection onto $\tau'$ and thus the sum is nonzero.

\textbf{Sub-case 2(b)}: In addition, $\min_{k\in I_{b}}\left\langle\nabla l_k,\tau\right\rangle\le -2d$.

Let $J\subset I_{b}$ be the index set of the vectors that realize the minimum above.

For $i\in I_e$, note that $\left\langle\nabla l_j,\tau\right\rangle-\left\langle\nabla l_i,\tau\right\rangle\le -2d+d=-d$, then
\begin{align*}
    &\frac{\left\|\proj_v(e^{-\frac1Tl_i(q')}\nabla l_i(q'))\right\|}{\left\|\proj_v(e^{-\frac1Tl_j(q')}\nabla l_j(q'))\right\|}\\
    &=\frac{e^{-\frac1T(t\left\langle\nabla l_i,\tau\right\rangle+O_i(t^2))}} {e^{-\frac1T(t\left\langle\nabla l_j,\tau\right\rangle+O_j(t^2))} }\frac{\left\|\proj_v(t\nabla^2l_i(q)(\tau,\cdot)+O_i(t^2))\right\|}{\norm*{\proj_v(\nabla l_j(q'))}}\\
    &=te^{\frac1T(t\left\langle\nabla l_j,\tau\right\rangle-t\left\langle\nabla l_i,\tau\right\rangle+O_{ij}(t^2))}\frac{\norm*{\proj_v(\nabla^2l_i(q)(\tau,\cdot)+O_i(t))}}{\left\|\proj_v(\nabla l_j(q'))\right\|}\\
    &\le te^{-\frac12\frac1Ttd}\frac{\norm*{\proj_v(\nabla^2l_i(q)(\tau,\cdot)+O_{ij}(t))}}{\norm*{\proj_v(\nabla l_j(q'))}}
    <\epsilon,
\end{align*}
for $t$ sufficiently small.

By the choice of $v$, $\left\langle\nabla l_k(q'),v\right\rangle$ is bounded from below by a positive constant for all $k\in I_{b}$. Therefore,
\begin{align*}
    \left\|\sum_Ie^{-\frac1Tl_i(q')}\nabla l_i(q')\right\|&\ge\|\proj_v\sum_{I_e\cup I_{b}}e^{-\frac1Tl_i(q')}\nabla l_i(q')\|\\
    &\ge\left\|\proj_v\sum_{I_{b}}e^{-\frac1Tl_j(q')}\nabla l_j(q')\right\|\\
    &-\left\|\proj_v\sum_{I_{e}}e^{-\frac1Tl_i(q')}\nabla l_i(q')\right\|\\
    &\ge\left\|\proj_v\sum_{J}e^{-\frac1Tl_j(q')}\nabla l_j(q')\right\|\\
    &-\left\|\proj_v\sum_{I_{e}}e^{-\frac1Tl_i(q')}\nabla l_i(q')\right\|\\
    &>(1-r\epsilon)\left\|e^{-\frac1Tl_j(q')}\proj_v(\nabla l_j(q'))\right\|>0
\end{align*}

Each case corresponds to a compact region of the unit sphere, and the process in each case can be done continuously, hence the theorem follows.

\end{proof}

\begin{lemma}
\label{sesplit}
Let $\{v_i\}_{i\in I}$ be a semi-eutactic set, then the index set $I$ can be uniquely split into a maximal eutactic index subset $I_e$ and the biased index complement $I_{b}$. See Figure \ref{fig:split by eutacticity}.
\end{lemma}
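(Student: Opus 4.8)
The plan is to treat this as what it really is: an elementary statement about the finite configuration $\{v_i\}_{i\in I}$. First I would restrict to $W:=\spn\{v_i\}_{i\in I}$, since all three notions are relative to this subspace anyway, and translate everything into solvability of linear (in)equalities. The dictionary I would use is: a configuration spanning its span is eutactic iff it admits a strictly positive relation $\sum_k\mu_k v_k=0$; it is biased iff it admits no nontrivial nonnegative relation, equivalently (Gordan) iff some $\tau$ has $\langle v_k,\tau\rangle>0$ for all $k$; and it is semi-eutactic iff it admits a nontrivial nonnegative relation but no strictly positive one. Only the eutactic equivalence needs an argument, and I would either cite it or include the short separating-hyperplane proof: if $0=\sum\mu_k v_k$ with all $\mu_k>0$ while some $\tau\ne0$ in $W$ satisfies $\langle v_k,\tau\rangle\ge0$ for all $k$, pairing forces $\tau\perp W$, a contradiction; conversely, if $0$ lies in the relative interior one may move slightly past each $v_k$ to produce a relation with a positive coefficient there and then average.

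With that in hand, the natural move is to \emph{define} $I_e$ intrinsically as the union of the supports of all nonnegative relations among the $v_i$, i.e.\ $I_e:=\{\,i\in I:\ \exists\,\lambda\ge0\text{ with }\textstyle\sum_j\lambda_j v_j=0\text{ and }\lambda_i>0\,\}$, and set $I_b:=I\setminus I_e$. The set $\Lambda:=\{\lambda\in\mathbb R_{\ge0}^I:\sum_j\lambda_j v_j=0\}$ is a convex cone, and it is nonzero precisely because $\{v_i\}_{i\in I}$ is semi-eutactic. Since $\Lambda$ is closed under addition, summing (over $i\in I_e$) relations that witness each individual index produces one $\lambda^{\ast}\in\Lambda$ whose support is exactly $I_e$; restricting $\lambda^{\ast}$ to $I_e$ is a strictly positive relation among $\{v_i\}_{i\in I_e}$, so this subset is eutactic.

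What remains — that $I_b$ is biased, that $I_e$ is the unique maximal eutactic subset, and that both pieces are nonempty — follows from the same support bookkeeping. If $\{v_i\}_{i\in I_b}$ were not biased it would carry a nontrivial nonnegative relation, whose extension by zero is an element of $\Lambda$ with support meeting $I_b$, contradicting $I_b\cap I_e=\emptyset$. If $J\subseteq I$ indexes any eutactic subset, a strictly positive relation on $J$ extended by zero lies in $\Lambda$ with support $J$, so $J\subseteq I_e$; thus $I_e$ contains every eutactic subset, is therefore the unique maximal one, and the splitting $(I_e,I_b)$ is forced. Finally $I_e\ne\emptyset$ because $\Lambda\ne\{0\}$, and $I_b\ne\emptyset$ because otherwise $\{v_i\}_{i\in I}$ would be eutactic rather than semi-eutactic. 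I do not expect a genuine obstacle: the whole point is the single observation that nonnegative relations among the $v_i$ form an additively closed cone, so their supports merge; the only things requiring a little care are the reduction to $W$ and a tidy proof of the eutactic $\Leftrightarrow$ strictly-positive-relation equivalence.
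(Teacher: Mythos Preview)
Your argument is correct. You take a genuinely different route from the paper, and both are short.

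The paper's proof is geometric: since the configuration is semi-eutactic, the origin lies on the boundary of the convex hull of $\{v_i\}$, hence in the relative interior of a unique face $P$; one then sets $I_e=\{i:v_i\in P\}$ and observes that a supporting hyperplane to the polytope at $P$ strictly separates the remaining $v_i$'s, so $I_b$ is biased. Your proof is algebraic: you define $I_e$ as the union of supports of all nonnegative linear relations and use that this support family is closed under union (because the relation cone $\Lambda$ is closed under addition) to produce one strictly positive relation on $I_e$; biasedness of $I_b$ and maximality/uniqueness then fall out of support bookkeeping. Your approach makes the uniqueness claim completely transparent and avoids any appeal to face structure of polytopes; the paper's approach has the advantage of directly exhibiting the separating hyperplane (equivalently, the vector $v$ with $\langle\nabla l_k,v\rangle>0$ for all $k\in I_b$) that is used immediately afterward in the semi-eutactic theorem, whereas in your setup that comes from the Gordan alternative you quote. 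The two descriptions of $I_e$ of course agree: a vertex $v_i$ lies on the minimal face through $0$ exactly when it appears with positive coefficient in some convex representation of $0$.
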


\begin{proof}
Consider the convex hull of $\{v_i\}$, then the origin is on the boundary by definition. Let $P$ be the maximal face of the convex hull where the origin lives. Let $I_{e}$ be the index set of the maximal subset whose convex hull is $P$, then any hyperplane perpendicular to $\spn\{v_i\}$ passing through $\{v_i\}_{I_e}$ separates all other vectors on one side.
\end{proof}

\begin{figure}[ht]
    \centering
    \includegraphics[width=7cm]{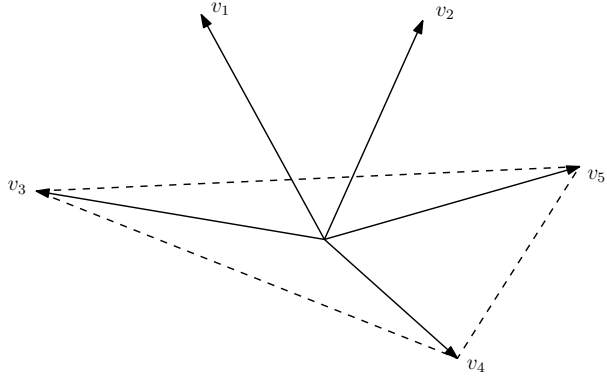}
    \caption{Split by eutacticity: in this example where $I=\{1,2,3,4,5\}$, we have $I_e=\{3,4,5\}$ and $I_b=\{1,2\}$}
    \label{fig:split by eutacticity}
\end{figure}

\begin{lemma}
Given a finite biased set of vectors $\{v_k\}\subset\mathbb R^n$, then for any $\theta>0$ small, there exists $d>0$ such that for any unit vector $\tau$ at least one of the following is true.

(1) $\angle(v_k,\tau)\le\frac{\pi}{2}+\theta$ for all $k$.

(2) $\min_k\left\langle v_k,\tau\right\rangle\le-2d.$
\end{lemma}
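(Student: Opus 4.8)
The plan is to notice that the asserted dichotomy is really a pointwise statement about the individual vectors $v_k$, and then to exhibit an explicit $\tau$-independent $d$. First I would translate the angle condition in (1) into an inner-product inequality: since $\tau$ is a unit vector, $\angle(v_k,\tau)\le\frac{\pi}{2}+\theta$ is equivalent to $\cos\angle(v_k,\tau)\ge\cos(\frac{\pi}{2}+\theta)=-\sin\theta$, i.e. to $\left\langle v_k,\tau\right\rangle\ge-\norm{v_k}\sin\theta$. Thus, for a given unit $\tau$, condition (1) says $\left\langle v_k,\tau\right\rangle\ge-\norm{v_k}\sin\theta$ for all $k$, while the failure of (2) says $\left\langle v_k,\tau\right\rangle>-2d$ for all $k$; so it suffices to choose $d$ making the second of these imply the first.

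With this in mind, set $m:=\min_k\norm{v_k}$ — positive because $\{v_k\}$ is finite and consists of nonzero vectors — and put $d:=\tfrac12\,m\sin\theta>0$. Suppose (2) fails for a unit vector $\tau$, so that $\left\langle v_k,\tau\right\rangle>-2d=-m\sin\theta$ for every $k$. Dividing by $\norm{v_k}\ge m$ gives $\cos\angle(v_k,\tau)=\left\langle v_k,\tau\right\rangle/\norm{v_k}>-\sin\theta$ for every $k$, hence $\angle(v_k,\tau)<\frac{\pi}{2}+\theta$ for every $k$, which is (1). Since $d$ depends only on $\theta$ and on the fixed lengths $\norm{v_k}$, it is uniform over $\tau$, as the statement requires.

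There is no genuine obstacle in this argument; the main point is simply the reduction to the pointwise inequality above, after which everything is a one-line computation. The two things worth keeping straight are the sign of the inequality (we are in the regime $0<\theta<\frac{\pi}{2}$, where $-\sin\theta=\cos(\frac{\pi}{2}+\theta)<0$) and the order of quantifiers — $d$ must be fixed before $\tau$ — which the explicit formula $d=\tfrac12 m\sin\theta$ takes care of. I would also note in passing that biasedness of $\{v_k\}$ is not actually needed for this statement; only finiteness and non-vanishing of the vectors enter (biasedness is what is being exploited elsewhere in the surrounding proof). Finally, it is worth recording that both sets $\{\tau:\text{(1) holds}\}$ and $\{\tau:\text{(2) holds}\}$ are closed, hence compact subsets of the unit sphere, which is exactly what the continuity argument in the proof of the semi-eutactic case relies on when splitting into Sub-cases 2(a) and 2(b) and asserting the process can be carried out continuously on each piece.
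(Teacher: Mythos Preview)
Your argument is correct and is essentially the explicit unpacking of the paper's one-line proof (``This follows from continuity''): you translate the angle condition into the inner-product inequality $\langle v_k,\tau\rangle\ge-\|v_k\|\sin\theta$ and then exhibit the explicit constant $d=\tfrac12\min_k\|v_k\|\sin\theta$, which is exactly what a continuity/compactness argument on the unit sphere would produce. Your observation that biasedness is not actually used here is also accurate.
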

\begin{proof}
    This follows from continuity.
\end{proof}
\begin{lemma}
\label{intersection angle}
Let $V_1,V_2\subset\mathbb R^n$ be two linear subspaces with nontrivial intersection $V_0:=V_1\cap V_2$. Then there exists a constant $C$ depending on $\angle(V_1,V_2)$ such that for $\theta$ small enough, for any $v\in\mathbb R^n$ with $\angle(v,V_i)<\theta$ for $i=1,2$, we have $\angle(v,V_0)<C\theta$, i.e., $$C(V_1,\theta)\cap C(V_2,\theta)\subset C(V_0,C\theta).$$
\end{lemma}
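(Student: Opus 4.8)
The plan is to reduce the statement to a quantitative separation estimate inside the orthogonal complement $W:=V_0^\perp$. Writing $V_i' := V_i\cap W = V_i\ominus V_0$, we have $V_1'\cap V_2'=\{0\}$, so the \emph{Friedrichs angle} $\gamma=\gamma(V_1,V_2)$, defined by $\cos\gamma=\sup\{\langle x,y\rangle : x\in V_1',\, y\in V_2',\, \|x\|=\|y\|=1\}$, satisfies $\cos\gamma<1$; this strict inequality is exactly the content of $V_0=V_1\cap V_2$ (finite dimension plus trivial intersection of the reduced subspaces), and the constant $C$ I produce will depend only on $\gamma$. I normalize $\|v\|=1$ and use that $\angle(v,V_i)<\theta$ is equivalent to $\|v-P_iv\|<\sin\theta$, where $P_i$ is the orthogonal projection onto $V_i$, and likewise $\sin\angle(v,V_0)=\|v-P_0v\|$.

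First I would split $v=v_0+v'$ with $v_0=P_0v\in V_0$ and $v'\in W$; the whole point is to bound $\|v'\|$. Since $V_0\subseteq V_i$ we have $V_i^\perp\subseteq W$, and $\mathbb R^n=V_0\oplus V_i'\oplus V_i^\perp$ orthogonally, so I can write $v'=a_i+b_i$ with $a_i\in V_i'$ and $b_i\in V_i^\perp$. Because $v_0\in V_i$, the $V_i$-orthogonal component of $v$ is precisely $b_i$, hence $\|b_i\|=\|v-P_iv\|<\sin\theta$ for $i=1,2$. Subtracting the two decompositions of $v'$ gives $a_1-a_2=b_2-b_1$, so $\|a_1-a_2\|<2\sin\theta$.

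The one genuinely substantive step is to turn smallness of $\|a_1-a_2\|$ into smallness of $\|a_1\|$ and $\|a_2\|$ separately. For this I use, for $x\in V_1'$ and $y\in V_2'$,
\[
\|x-y\|^2 \;\ge\; \|x\|^2+\|y\|^2-2\cos\gamma\,\|x\|\,\|y\| \;\ge\; (1-\cos\gamma)\bigl(\|x\|^2+\|y\|^2\bigr),
\]
which follows from the definition of $\gamma$ together with $2\|x\|\|y\|\le\|x\|^2+\|y\|^2$. Applied to $x=a_1$, $y=a_2$ this yields $\|a_1\|+\|a_2\|\le\sqrt{2/(1-\cos\gamma)}\;\|a_1-a_2\|<2\sqrt{2/(1-\cos\gamma)}\,\sin\theta$, and therefore $\|v'\|\le\|a_1\|+\|b_1\|\le\bigl(2\sqrt{2/(1-\cos\gamma)}+1\bigr)\sin\theta=:C'\sin\theta$. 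Thus $\sin\angle(v,V_0)=\|v'\|\le C'\sin\theta$, and for $\theta$ small enough that $C'\sin\theta<1$ the inequality $\arcsin(s)\le\tfrac{\pi}{2}s$ on $[0,1]$ gives $\angle(v,V_0)\le\tfrac{\pi}{2}C'\sin\theta\le\tfrac{\pi}{2}C'\theta=:C\theta$, which is the assertion; the cone inclusion $C(V_1,\theta)\cap C(V_2,\theta)\subset C(V_0,C\theta)$ is merely a restatement.

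The main obstacle is only that middle step: recognizing that a finite-dimensional pair of subspaces with trivial intersection has a uniform positive separation and packaging it as the two-sided estimate above; the rest is bookkeeping with orthogonal projections. I would also spend a sentence noting that the quantity the statement calls $\angle(V_1,V_2)$ must be interpreted as this Friedrichs angle between the reduced subspaces $V_i\ominus V_0$ — the naive smallest principal angle is $0$ as soon as $V_0\neq 0$ — but this interpretation does not affect the argument.
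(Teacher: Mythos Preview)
Your argument is correct. The orthogonal splitting $\mathbb R^n=V_0\oplus V_i'\oplus V_i^\perp$, the identification $\|b_i\|=\sin\angle(v,V_i)$, and the separation inequality $\|x-y\|^2\ge(1-\cos\gamma)(\|x\|^2+\|y\|^2)$ on $V_1'\times V_2'$ all check out, and the passage from $\sin\angle(v,V_0)\le C'\sin\theta$ to $\angle(v,V_0)\le C\theta$ via convexity of $\arcsin$ is fine.

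The route is genuinely different from the paper's. The paper projects $v$ onto $V_0,V_1,V_2$, observes that the relevant angles live in the $3$-dimensional span of these projections, and then appeals to spherical geometry on $S^2$: the projections determine two great circles meeting at $\pm v_0$ at an angle bounded below by $\angle(V_1,V_2)$, and the spherical distance from $v$ to their intersection is controlled by a constant times the maximum of the distances to the two circles. Your approach instead stays in $\mathbb R^n$, isolates the Friedrichs angle $\gamma$ as the relevant invariant, and proceeds by pure linear algebra with orthogonal projections. What you gain is an explicit constant $C=\tfrac{\pi}{2}\bigl(1+2\sqrt{2/(1-\cos\gamma)}\bigr)$ and a fully self-contained argument with no dimension reduction; what the paper's version gains is a geometric picture (two great circles on a sphere) that makes the statement visually evident, at the cost of leaving the constant implicit and the reduction step somewhat informal. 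Your closing remark that $\angle(V_1,V_2)$ must be read as the Friedrichs angle (since the minimal principal angle vanishes once $V_0\ne 0$) is apt and worth keeping.
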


\begin{proof}
It is obvious if one is a subspace of the other. Assume $\angle(V_1,V_2)>0$. Let $v_i$ be the projection of $v$ onto $V_i$, $i=0,1,2$ and $v'_i=v_i-v_0$, $i=1,2$. By the setup $\angle(V_1,V_2)=\min\{\angle(v'_1,v'_2),\pi-\angle(v'_1,v'_2)\}>0$ and $\angle(v,V_i)=\angle(v,v_i)$ for $i=0,1,2$, it suffices to prove the lemma in $\mathbb R^3$. Let $C_i$ be the great circle on $S^2$ spanned by $v_i$ and $v_0$, $i=1,2$, then $C_1\cap C_2=\pm v_0$, and $\min\{\angle(v'_1,v'_2),\pi-\angle(v'_1,v'_2)\}=\angle(C_1,C_2)$. Therefore, there exists $C>0$ depending on $\angle(C_1,C_2)$ such that $\angle(v,v_0)=d_{S^2}(v,v_0)< C\max_{i=1,2}{d_{S^2}(v,C_i)}=C\max_{i=1,2}{\angle(v,v_i)}<C\theta$ if $v$ is close enough to $C_1$ and $C_2$.
\end{proof}

\begin{figure}[ht]
    \centering
    \includegraphics[width=5cm]{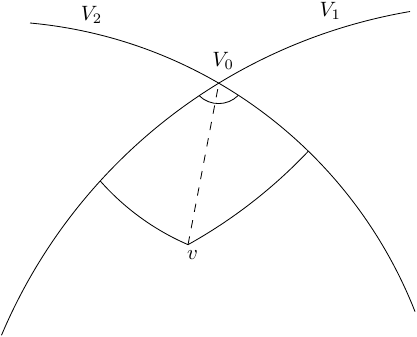}
    \caption{Projectivized view of the settings in Lemma \ref{intersection angle}}
\end{figure}

\begin{lemma}
\label{convex proj}
Let $\{v_i\}_{i\in I}\subset\mathbb R^n$ be a finite biased set of vectors, and $\mathbf C=\cap_i\mathbb H(v_i)$, then there exists $\theta$ small enough, such that for any vector $\tau\not\in \mathbf C$ with $\angle(v_i,\tau)\le\frac{\pi}{2}+\theta$ for all $i$, there exists a unique unit vector $u\in\partial\mathbf C$ such that $\angle(u,\tau)=\min_{v\in\partial\mathbf C}\angle(v,\tau)$.

Furthermore, $\angle(u,\tau)<C\theta$ for constant $C$.
\end{lemma}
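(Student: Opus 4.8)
The plan is to realize $u$ as the radial projection of the nearest‑point projection of $\tau$ onto the closed convex cone $\overline{\mathbf C}$, with every estimate made quantitative through the ``biasedness margin'' of $\{v_i\}$. We may assume $\norm{\tau}=1$, since a positive rescaling of $\tau$ changes neither the hypotheses nor the conclusion. Since $\{v_i\}$ is biased, $\max_{\sigma\in S^{n-1}}\min_i\langle v_i,\sigma\rangle>0$, so fix a unit vector $\tau_0$ and a constant $\delta>0$, both depending only on $\{v_i\}$, with $\langle v_i,\tau_0\rangle\ge\delta$ for all $i$; thus $\mathbf C$ is a nonempty open convex cone, $\overline{\mathbf C}=\{w:\langle v_i,w\rangle\ge0\ \forall i\}$ is a closed convex cone with $\intr\overline{\mathbf C}=\mathbf C$ and $\partial\overline{\mathbf C}=\partial\mathbf C$. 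The key input is a \emph{linear} distance estimate: the hypothesis $\angle(v_i,\tau)\le\frac{\pi}{2}+\theta$ means $\langle v_i,\tau\rangle\ge-\norm{v_i}\sin\theta\ge-\eta$ with $\eta:=(\max_i\norm{v_i})\sin\theta$, so $\tau':=\tau+\frac{\eta}{\delta}\tau_0$ satisfies $\langle v_i,\tau'\rangle\ge 0$ for every $i$, i.e. $\tau'\in\overline{\mathbf C}$, and hence $\mathrm{dist}(\tau,\overline{\mathbf C})\le\norm{\tau-\tau'}=\frac{\eta}{\delta}\le C_0\sin\theta$ with $C_0:=\frac{\max_i\norm{v_i}}{\delta}$. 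Choosing $\theta$ small enough that $C_0\sin\theta<\frac12$ gives $\mathrm{dist}(\tau,\overline{\mathbf C})<\frac12$. This is the one place where biasedness is genuinely used: without a nonempty $\intr\overline{\mathbf C}$ and a positive margin $\delta$, $\tau$ could make obtuse‑or‑right angles with all the $v_i$ while lying far from $\overline{\mathbf C}$, and both the nonvanishing and the uniqueness below would break.

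Next let $w$ be the unique nearest point of $\overline{\mathbf C}$ to $\tau$ and set $p:=\tau-w$, so $\norm p=\mathrm{dist}(\tau,\overline{\mathbf C})$. From the variational inequality $\langle p,z-w\rangle\le0$ for all $z\in\overline{\mathbf C}$, applied to $z=0$ and to $z=2w$ (both in $\overline{\mathbf C}$ since it is a cone), one gets $\langle p,w\rangle=0$ and therefore $\langle p,z\rangle\le0$ for every $z\in\overline{\mathbf C}$. Hence $1=\norm\tau^2=\norm w^2+\norm p^2$, so $\norm w^2=1-\mathrm{dist}(\tau,\overline{\mathbf C})^2>\frac34>0$ and $u:=w/\norm w$ is defined. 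Since $\tau\notin\mathbf C$, $w$ lies on $\partial\overline{\mathbf C}=\partial\mathbf C$, and as $\partial\mathbf C$ is a cone, $u\in\partial\mathbf C$ is a unit vector. Moreover $\langle u,\tau\rangle=\frac{1}{\norm w}\langle w,w+p\rangle=\norm w$, so $\cos\angle(u,\tau)=\norm w$ and $\sin\angle(u,\tau)=\norm p\le C_0\sin\theta$; shrinking $\theta$ once more yields $\angle(u,\tau)<C\theta$ with $C$ depending only on $\{v_i\}$. (If $\tau\in\partial\mathbf C$ the same formulas apply verbatim with $w=\tau$, $p=0$, $u=\tau/\norm\tau$, $\angle(u,\tau)=0$.)

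For uniqueness, note that minimizing $\angle(\cdot,\tau)$ over unit vectors of $\partial\mathbf C$ is the same as maximizing $\langle\cdot,\tau\rangle$ there, and in fact over all unit vectors of $\overline{\mathbf C}$: for such $z$ one has $\langle z,\tau\rangle=\langle z,w\rangle+\langle z,p\rangle\le\norm z\,\norm w=\norm w=\langle u,\tau\rangle$, using $\langle z,p\rangle\le0$ and Cauchy--Schwarz, and equality forces $\langle z,w\rangle=\norm z\,\norm w$, hence $z=w/\norm w=u$ by the equality case of Cauchy--Schwarz. So $u$ is the unique maximizer, hence the unique unit minimizer of $\angle(\cdot,\tau)$ on $\partial\mathbf C$. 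The only delicate step in all of this is the linear distance estimate of the first paragraph — one needs the rate $O(\theta)$ rather than a mere $o(1)$ and needs the implied constant tied to the biasedness margin $\delta$ — and the explicit shift $\tau\mapsto\tau+\frac{\eta}{\delta}\tau_0$ supplies it with no compactness argument; the remainder is routine obtuse‑angle calculus for metric projection onto a closed convex cone.
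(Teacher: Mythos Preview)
Your proof is correct and takes a genuinely different route from the paper's. The paper interprets $\angle(\cdot,\tau)$ as spherical distance on $S^{n-1}$, notes that $\overline{\mathbf C}\cap S^{n-1}$ is spherically convex (since $\mathbf C$ lies in a half-space), and asserts that nearest-point projection onto it is unique when $\tau$ is close enough to $\partial\mathbf C$; the $C\theta$ bound is then obtained from the preceding lemma on angles to intersections of linear subspaces, combined with the finiteness of faces of $\partial\mathbf C$. You instead work in the ambient Euclidean space: the explicit shift $\tau\mapsto\tau+\frac{\eta}{\delta}\tau_0$ converts the angle hypothesis directly into a linear distance estimate $\mathrm{dist}(\tau,\overline{\mathbf C})\le C_0\sin\theta$ from the biasedness margin $\delta$, and the Moreau decomposition for metric projection onto a closed convex cone then delivers existence, uniqueness, and the angle bound in one stroke, with the explicit constant $C_0=\max_i\norm{v_i}/\delta$. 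Your argument is self-contained and quantitative, bypassing both the spherical-convexity step and the auxiliary lemma; the paper's is terser to state but leans on that external input and leaves the constant implicit.
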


\begin{proof}

It is clear that $v\in \mathbf C$ if and only if $\left\langle v_i,v\right\rangle\ge0$ for all $i$, then for $v_1,v_2\in \mathbf C$, for $0\le\lambda\le1$, $\left\langle v_i,\lambda v_1+(1-\lambda)v_2\right\rangle\ge0$, showing that $\mathbf C$ is radial and convex. Note that $\mathbf C$ is contained in some half space. Now consider the angle between a vector and $\tau$ as a function on $\mathbf C\cap S^{n-1}$. If $\tau\not\in \mathbf C\cap S^{n-1}$, $\angle(\cdot,\tau)=d_{S^{n-1}}(\cdot,\tau)$ is minimized over $\mathbf C\cap S^{n-1}$ by a unique vector $v\in\partial \mathbf C\cap S^{n-1}$ by convexity if $\tau$ is close enough to $\partial \mathbf C$.

The second part is due to Lemma \ref{intersection angle} and the finiteness of faces.
\end{proof}
\begin{figure}[ht]
    \centering
    \includegraphics[width=5cm]{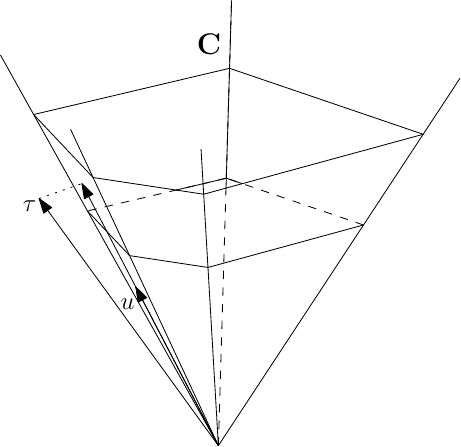}
    \caption{Projection onto the polygonal cone of a vector near the convex hull}
\end{figure}
\begin{definition}
\label{convex proj def}
With the same settings in the above lemma, define $$\angle(\tau,\partial\mathbf C)=\angle(u,\tau) \text{ and\ } \proj_{\mathbf C}\tau=\proj_{u}\tau.$$
\end{definition}

\begin{remark}
We can reduce the condition on $\mathbf C$ to any radial and convex subset for Lemma \ref{convex proj} and Definition \ref{convex proj def}.
\end{remark}
\begin{lemma}
Let $\mathbf C=\cap\mathbb H_i\neq\emptyset$ be the intersection of finitely many half spaces. Suppose $V$ is a linear subspace that has nontrivial intersection with $\mathbf C$, then for any $v\in\mathbb R^n$ and for $\theta$ small, if $\angle(v,\partial \mathbf C)<\theta$ and $\angle(v,V)<\theta$, then $$\angle(v,\mathbf C\cap V)\le\angle(v,\partial \mathbf C\cap V)<C\theta.$$
\end{lemma}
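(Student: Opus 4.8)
\emph{Proof plan.} The plan is to peel off a single facet of $\mathbf C$ and reduce the statement to the earlier Lemma~\ref{intersection angle} (for subspaces) and Lemma~\ref{convex proj} (for polyhedral cones). Normalize all vectors to the unit sphere and write $\mathbf C=\bigcap_i\mathbb H_i$ with $\mathbb H_i=\{x:\langle n_i,x\rangle\ge 0\}$, $|n_i|=1$, over a finite index set. The left-hand inequality $\angle(v,\mathbf C\cap V)\le\angle(v,\partial\mathbf C\cap V)$ is automatic because $\partial\mathbf C\cap V\subseteq\mathbf C\cap V$ and the angle to a smaller set is no smaller, so it is enough to exhibit a unit vector $w\in\partial\mathbf C\cap V$ with $\angle(v,w)<C\theta$, where $C$ depends only on $\mathbf C$ and $V$.

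First I would pin down a facet through which $v$ approaches $\partial\mathbf C$. Pick a unit $u_0\in\partial\mathbf C$ with $\angle(v,u_0)<\theta$ and an index $i_0$ with $\langle n_{i_0},u_0\rangle=0$; put $H:=\partial\mathbb H_{i_0}$ (a hyperplane through $u_0$) and $W:=H\cap V$. Then $\angle(v,H)<\theta$ and $\angle(u_0,V)<2\theta$. If $W=\{0\}$, the first principal angle $\angle(H,V)$ is a positive constant of the configuration, contradicting $\angle(u_0,V)\ge\angle(H,V)$ for $\theta$ small; so $W\ne\{0\}$ and Lemma~\ref{intersection angle} with $V_1=H$, $V_2=V$, $V_0=W$ gives $\angle(v,W)<C_0\theta$. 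Let $v'$ be the unit vector along $\proj_W v$. Since $v'\in W\subseteq\partial\mathbb H_{i_0}$, every nonzero $x\in\mathbf C\cap W$ has $\langle n_{i_0},x\rangle=0$ while lying in $\mathbf C$, and perturbing $x$ along $-n_{i_0}$ leaves $\mathbf C$; hence $x\in\partial\mathbf C$, i.e.\ $\mathbf C\cap W\subseteq\partial\mathbf C\cap V$. So it remains only to find $w\in\mathbf C\cap W$ near $v'$.

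Next, $\angle(v',\mathbf C)\le\angle(v',v)+\angle(v,\mathbf C)<(C_0+1)\theta$ since $\angle(v,\mathbf C)\le\angle(v,\partial\mathbf C)<\theta$; choose a unit $u\in\mathbf C$ with $\angle(v',u)<(C_0+1)\theta$ and let $u_W$ be the unit vector along $\proj_W u$. Because $v'\in W$ we have $\angle(u,W)<(C_0+1)\theta$, so $|u-u_W|<c\theta$ and therefore $\langle n_i,u_W\rangle>-c\theta$ for every $i$: the vector $u_W\in W$ violates each inequality defining $\mathbf C\cap W$ inside $W$ by at most $c\theta$. A compactness argument (the constraint-violation function is continuous, positively homogeneous, and vanishes only on $\mathbf C\cap W$) rules out $\mathbf C\cap W=\{0\}$ for $\theta$ small, and then either $u_W\in\mathbf C\cap W$, or Lemma~\ref{convex proj} together with the remark following it, applied to the polyhedral cone $\mathbf C\cap W\subseteq W$, yields $w\in\partial(\mathbf C\cap W)\subseteq\mathbf C\cap W$ with $\angle(u_W,w)<C_1\theta$. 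Either way $w\in\partial\mathbf C\cap V$, and chaining the bounds gives $\angle(v,w)\le\angle(v,v')+\angle(v',u)+\angle(u,u_W)+\angle(u_W,w)<C\theta$ with $C=2C_0+c+C_1+1$, completing the argument.

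The step I expect to be the real obstacle is this last one: that a point of $W$ which is $c\theta$-close to satisfying every defining inequality of $\mathbf C\cap W$ must be $C_1\theta$-close to $\mathbf C\cap W$ itself, with $C_1$ uniform in the position of $v$. This is a homogeneous Hoffman-type error bound for a polyhedral cone, and it is precisely what Lemma~\ref{convex proj} supplies, since its hypothesis says that $\tau$ violates each constraint by $O(\theta)$ and its conclusion is a point of the cone at angle $O(\theta)$. The tempting shortcut---project the witness $u\in\mathbf C$ into $V$ and hope it remains in $\mathbf C$---does not work, because $\mathbf C\cap V$ may be far thinner than $\mathbf C$ near a common boundary face, so only near-feasibility survives the projection and one must re-project onto $\mathbf C\cap W$. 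The remaining points are routine: the degenerate cases $W=\{0\}$ and $\mathbf C\cap W=\{0\}$ are excluded once $\theta$ drops below a threshold fixed by the configuration, since each would make the two hypotheses on $v$ jointly unsatisfiable.
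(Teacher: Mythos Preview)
Your argument is correct, but it takes a genuinely different route from the paper's. The paper projects $v$ onto $\partial\mathbf C$, lands in some face $f$ of the polyhedral boundary, observes that $f\cap V$ must be nontrivial once $\theta$ is small (otherwise $\angle(f,V)$ would be a fixed positive number bounded above by $2\theta$), and then invokes Lemma~\ref{intersection angle} once with $V_1$ the span of $f$ and $V_2=V$ to land in $f\cap V\subset\partial\mathbf C\cap V$. Because a face $f$ already sits inside $\mathbf C$, no further ``get back into the cone'' step is needed. You instead peel off a single facet hyperplane $H=\partial\mathbb H_{i_0}$, which does \emph{not} lie in $\mathbf C$, so after Lemma~\ref{intersection angle} puts you into $W=H\cap V$ you must run a second argument---the Hoffman-type bound via Lemma~\ref{convex proj}---to re-enter $\mathbf C\cap W$.

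The trade-off: the paper's proof is a one-liner once the face is chosen, but its appeal to Lemma~\ref{intersection angle} is slightly informal, since $f$ is a cone and one must argue that the point produced in $\mathrm{span}(f)\cap V$ actually lies in $f$ (this uses that the projection point is in the relative interior of the minimal face, and that there are only finitely many faces). Your approach is longer but cleaner in that each step applies a lemma exactly as stated: Lemma~\ref{intersection angle} to genuine linear subspaces $H$ and $V$, then Lemma~\ref{convex proj} to the polyhedral cone $\mathbf C\cap W$ inside $W$. Your handling of the degenerate cases $W=\{0\}$ and $\mathbf C\cap W=\{0\}$ is also correct, since there are only finitely many candidate hyperplanes $H$ and hence finitely many thresholds to beat.
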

\begin{proof}
Let $\tau'$ be the projection of $\tau$ onto $\partial \mathbf C$ and $f$ be a face of $\partial \mathbf C$ such that $\tau'\in f$, then we claim $f\cap V\neq\emptyset$ if $\theta$ is chosen small enough. Otherwise, $\angle(f,V)\le\angle(v,f)+\angle(v,V)<2\theta$, which is a contradiction if $2\theta<\min_{f\in F}\angle(f,V)$, where $F=\{f, f \text{ is a face of }\partial \mathbf C \text{ and }f\cap V=\emptyset\}$. Note that $f\cap V$ is radial and convex, $\angle(\tau,\partial \mathbf C\cap V)\le\angle(\tau,f\cap V)\le\angle(\tau,\proj_{f\cap V}\tau')<C\theta$ by Lemma \ref{intersection angle}.
\end{proof}

\subsection{Near the boundary}
\label{boundary}
\begin{definition}
The $\epsilon$-thin part and $\epsilon$-thick part of the moduli space are defined as
    $$\mathcal M^{\le\epsilon}=\{X\in\mathcal M:\sys(X)\le\epsilon\},$$
    $$\mathcal M^{\ge\epsilon}=\{X\in\mathcal M:\sys(X)\ge\epsilon\}.$$
\end{definition}
\begin{theorem}[Points near the boundary]
There exist $\epsilon>0$ and $T_0>0$ such that any $q\in\mathcal M^{\le\epsilon}$ is a regular point for $\syst$ for all $T<T_0$. In fact, for any $\beta$ of length $\le\epsilon$, we have $\left\langle\nabla l_\beta,\nabla\syst\right\rangle>0$.
\end{theorem}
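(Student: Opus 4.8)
The plan is to pair the Weil--Petersson gradient of $\syst$ against $\nabla l_\beta$ and show the pairing is strictly positive; since $\syst$ is $C^2$, having $\nabla\syst\neq 0$ at $q$ immediately makes $q$ a regular point, and the displayed inequality is exactly what we prove. Using the formula $\nabla\syst=\bigl(\sum_\gamma e^{-\frac1Tl_\gamma}\bigr)^{-1}\sum_\gamma e^{-\frac1Tl_\gamma}\nabla l_\gamma$, it suffices to show $N:=\sum_\gamma e^{-\frac1Tl_\gamma}\langle\nabla l_\beta,\nabla l_\gamma\rangle_{\textit{WP}}>0$, where $\beta$ is any simple closed geodesic on $q$ with $l_\beta\le\epsilon$ (such a $\beta$ exists because $\sys(q)\le\epsilon$).

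First I would separate the simple closed geodesics on $q$ into those disjoint from or equal to $\beta$ and those crossing $\beta$. For the first group every term is nonnegative: this is Theorem \ref{lengthgradientshort} when $l_\gamma\le c_0$, and in general it is the global positivity of the Weil--Petersson length--length pairing for disjoint curves, which follows from Riera's formula \cite{riera2005formula} (every summand there is nonnegative). In addition the single term $\gamma=\beta$ contributes $e^{-\frac1Tl_\beta}\norm{\nabla l_\beta}^2\ge\frac2\pi e^{-\frac1Tl_\beta}l_\beta$, so the only possibly negative part of $N$ comes from geodesics $\gamma$ crossing $\beta$, and by the collar lemma each such $\gamma$ satisfies $l_\gamma\ge 2\arcsinh\!\bigl(1/\sinh(l_\beta/2)\bigr)=:L_\beta$, with $L_\beta\ge\log(1/l_\beta)\to\infty$ as $l_\beta\to 0$.

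Next I would bound the crossing part by Cauchy--Schwarz, $|\langle\nabla l_\beta,\nabla l_\gamma\rangle|\le\norm{\nabla l_\beta}\,\norm{\nabla l_\gamma}$, then invoke Theorem \ref{lengthgradientnorm} (so $\norm{\nabla l_\beta}=O(\sqrt{l_\beta})$ and $\norm{\nabla l_\gamma}=O(l_\gamma e^{l_\gamma/4})$ for $l_\gamma\ge 1$) together with $c_X(L)\le ce^L$ to sum over all $\gamma$ with $l_\gamma\ge L_\beta$. For $T<\tfrac45$ this is a geometric-type tail of the same shape already handled in Lemma \ref{C1tail}, of size $O\!\bigl(\sqrt{l_\beta}\,\log(1/l_\beta)\,l_\beta^{1/T-5/4}\bigr)$. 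Comparing this with the main term $\frac2\pi e^{-\frac1Tl_\beta}l_\beta$ and taking logarithms, $N>0$ reduces to $\frac1T\bigl(\log(1/l_\beta)-l_\beta\bigr)>\frac74\log(1/l_\beta)+O\!\bigl(\log\log(1/l_\beta)\bigr)$, which holds for all $l_\beta\le\epsilon$ and $T\le T_0$ once $\epsilon$ is small enough (so that $\log(1/l_\beta)-l_\beta\ge\frac12\log(1/l_\beta)$ and the lower-order terms are absorbed) and $T_0<\tfrac14$; these choices depend only on $(g,n)$.

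The main obstacle, and the reason the full force of Riera's formula is needed rather than only the small-length asymptotics of Theorem \ref{lengthgradientshort}, is the uniformity as $l_\beta\to 0$: the dominant positive contribution $e^{-\frac1Tl_\beta}l_\beta$ itself tends to zero, so one cannot afford to discard the positive but otherwise uncontrolled contribution of the infinitely many long geodesics disjoint from $\beta$. The second delicate point is that the collar lower bound $L_\beta$ on lengths of crossing geodesics must outpace $1/T$, which is precisely what pins $T_0$ below an absolute constant. Everything else is a routine tail estimate.
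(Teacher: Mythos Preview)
Your proposal is correct and follows essentially the same route as the paper: pair $\nabla\syst$ against the gradient of a short curve, split the sum into geodesics disjoint from that curve (every term nonnegative by Riera's formula, with the diagonal term giving the main positive contribution $\asymp l_\beta e^{-l_\beta/T}$) versus geodesics crossing it (forced to be long by the collar lemma, hence a negligible tail), and then compare. The paper phrases the pairing as a projection onto $\nabla l_i$ for a systole $\gamma_i$ and uses the sharper collar bound $x(\epsilon)>-2\log\epsilon$, which is why it lands on a $T_0$ near $4$ rather than your $4/7$; your bookkeeping with $l_\beta$ in place of $\epsilon$ is actually cleaner than the paper's, which tacitly writes $\epsilon$ where $l_i$ is meant.
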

\begin{proof}
Let $q\in\mathcal M^{\le\epsilon}$ and $S(q)=\{\gamma_1,\cdots,\gamma_r\}$. By the collar lemma or Corollary 4.1.2 in \cite{buser2010geometry}, $\gamma_i$'s are disjoint. Let $\gamma_I=\cup\gamma_i$. We have the Weil-Petersson pairing of $\nabla l_i$ with $\nabla l_\gamma$ for $\gamma$ classified into the following three types.

Type 1: $\gamma=\gamma_j$.
$$\left\langle\nabla l_\gamma,\nabla l_i\right\rangle>\left\|\nabla l_i\right\|\delta_{ij}.$$

Type 2: $\gamma\cap\gamma_I=\emptyset$.
$$\left\langle\nabla l_\gamma,\nabla l_i\right\rangle>0.$$

Type 3: $i(\gamma,\gamma_I)>0$.

By the collar lemma, $l_\gamma>x(\epsilon)$, where $x(\epsilon)=2\arcsinh(\frac{1}{\sinh\frac{\epsilon}{2}})>-2\log\epsilon$.

Therefore, choose any $i$,
\begin{align*}
    \norm*{\sum_\gamma e^{-\frac1Tl_\gamma}\nabla l_\gamma}&\ge\norm*{\sum_{i(\gamma,\gamma_I)=0} e^{-\frac1Tl_\gamma}\nabla l_\gamma}-\norm*{\sum_{i(\gamma,\gamma_I)>0} e^{-\frac1Tl_\gamma}\nabla l_\gamma}\\
    &\ge \norm*{\proj_{\nabla l_i}\left( \sum_{i(\gamma,\gamma_I)=0} e^{-\frac1Tl_\gamma}\nabla l_\gamma \right)} - \norm*{\sum_{i(\gamma,\gamma_I)>0} e^{-\frac1Tl_\gamma}\nabla l_\gamma}\\    
    &\ge e^{-\frac1Tl_i}\norm*{\nabla l_i}-Ce^{-\frac1T x(\epsilon)}\\
    &\ge \frac12\sqrt{\epsilon}e^{-\frac1T\epsilon}-Ce^{\frac2T\log\epsilon}.
\end{align*}

In order for the lower bound above to be positive, it suffices to make $T$ satisfy
\begin{align*}
    T<\frac{-2\log\epsilon-\epsilon}{\log(2C)-\frac12\log\epsilon}
\end{align*}
Note that
\begin{align*}
    \frac{-2\log\epsilon-\epsilon}{\log(2C)-\frac12\log\epsilon}\to 4,
\end{align*}
as $\epsilon\to0^+$. Therefore, when $\epsilon<\epsilon_0$ is small enough, a uniform upper bound $T_0$ for $T$ can be chosen such that for any $q\in\mathcal M^{\le\epsilon_0}$, $\syst(q)\neq0$ for all $T<T_0$.
\end{proof}

\begin{remark}
By the critical points attracting property, all critical points of $\syst$ live in the thick part of the moduli space.
\end{remark}
\begin{remark}
\label{boundarydirection}
    The positive pairing in the theorem implies that when $T<T_0$, $\nabla\syst$ points transversely inward to the $\sys$-level sets $\mathcal M^{=\epsilon'}$ in $\mathcal M^{\le\epsilon}$. Therefore, we have the following.
\end{remark}
\begin{corollary}
For $\epsilon$ small enough, $\mathcal M^{\ge\epsilon}$ is a deformation retract of $\mathcal M$.
\end{corollary}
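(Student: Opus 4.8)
The plan is to build an explicit (strong) deformation retraction of $\mathcal M$ onto $\mathcal M^{\ge\epsilon}$ by flowing along a reparametrization of the Weil--Petersson gradient $\nabla\syst$ inside the thin part. The conceptual input is exactly Remark \ref{boundarydirection}: on $\mathcal M^{\le\epsilon}$ the field $\nabla\syst$ is nonzero and points strictly toward larger systole, so $\mathcal M^{\le\epsilon}$ behaves like a topological collar of $\mathcal M^{=\epsilon}$, and the gradient flow sweeps it onto $\mathcal M^{=\epsilon}$ while fixing $\mathcal M^{\ge\epsilon}$.

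First I would fix $\epsilon_0,T_0$ as provided by the preceding theorem (``Points near the boundary'') and take $\epsilon<\epsilon_0$ and $T<T_0$. On the open set $\{q:\sys(q)<\epsilon_0\}\supseteq\mathcal M^{\le\epsilon}$, where every point is regular for $\syst$ (so $\nabla\syst\neq0$) and where $\langle\nabla l_\beta,\nabla\syst\rangle>0$ for every $\beta$ with $l_\beta\le\epsilon_0$, set $W=\|\nabla\syst\|_{WP}^{-2}\,\nabla\syst$; since $\syst$ is $C^2$ and the Weil--Petersson metric is smooth on the open moduli space, $W$ is $C^1$ and its local flow $\phi_s$ is well defined. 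Two monotonicity facts drive the argument. First, $\tfrac{d}{ds}\syst(\phi_s(q))=\langle\nabla\syst,W\rangle=1$, and $\syst<\sys\le\sys(g,n)$ is bounded, so no trajectory stays in $\{\sys<\epsilon\}$ for infinite time. Second, since $\sys=\min_\gamma l_\gamma$ and only the $\gamma\in S(q)$, all of length $\sys(q)\le\epsilon$, compete near $q$, the one-sided derivative of $\sys$ along $W$ equals $\min_{i\in S(q)}\langle\nabla l_i,W\rangle>0$, so $\sys$ is strictly increasing along every trajectory. Combining these with compactness of the shells $\{\delta\le\sys\le\epsilon\}$ ($=\mathcal M^{\ge\delta}\cap\mathcal M^{\le\epsilon}$), every trajectory starting in $\{\sys<\epsilon\}$ reaches $\mathcal M^{=\epsilon}$ at a finite positive time $\sigma(q)$, with $\sigma$ continuous on $\{\sys<\epsilon\}$ and $\sigma(q)\to0$ as $\sys(q)\to\epsilon^-$.

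With this in hand, define $H\colon\mathcal M\times[0,1]\to\mathcal M$ by $H(q,t)=q$ when $q\in\mathcal M^{\ge\epsilon}$ and $H(q,t)=\phi_{t\sigma(q)}(q)$ when $\sys(q)<\epsilon$. Then $H(\cdot,0)=\mathrm{id}$; $H(\cdot,1)$ sends $\mathcal M$ onto $\mathcal M^{\ge\epsilon}$ (trajectories land on $\mathcal M^{=\epsilon}$, thick points are fixed) and restricts to the identity on $\mathcal M^{\ge\epsilon}$, so it is a retraction; and $H(\cdot,t)$ fixes $\mathcal M^{\ge\epsilon}$ for all $t$, giving a strong deformation retraction. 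Continuity on the two open pieces is immediate from joint continuity of $(s,q)\mapsto\phi_s(q)$ and continuity of $\sigma$; across the interface $\mathcal M^{=\epsilon}$ it follows from $\sigma(q)\to0$, since then $\phi_{t\sigma(q)}(q)\to q$. The corollary follows.

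I expect the technical heart — and the main obstacle — to be establishing that the hitting time $\sigma$ is well defined, finite, continuous, and tends to $0$ at the interface. Because $\sys$ is merely continuous, not $C^1$, one cannot invoke the implicit function theorem; instead one leans on strict monotonicity of $s\mapsto\sys(\phi_s(q))$ for well-definedness and continuity of $\sigma$, and on a uniform positive lower bound for the rate of increase on each compact shell $\{\delta\le\sys\le\epsilon\}$ for finiteness and for $\sigma(q)\to0$. That lower bound rests on a small semicontinuity point, namely that $S(q_n)\subseteq S(q)$ eventually when $q_n\to q$, so that $q\mapsto\min_{i\in S(q)}\langle\nabla l_i,\nabla\syst\rangle$ is lower semicontinuous and hence bounded below by a positive constant on each shell, together with the quantitative estimate behind the preceding theorem; the remainder of the argument is routine.
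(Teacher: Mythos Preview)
Your proposal is correct and follows precisely the approach the paper intends: the corollary is stated without proof immediately after Remark~\ref{boundarydirection}, and your argument is the standard fleshing-out of that remark, pushing points in the thin part along the Weil--Petersson gradient flow of $\syst$ until they reach $\mathcal M^{=\epsilon}$. The paper treats this as immediate; you have simply supplied the details (hitting time, its continuity, and behavior at the interface), and those details are handled soundly.
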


\section{Extension onto the boundary}
\label{extension}
Recall that the Deligne-Mumford boundary $\partial_{\textit{DM}}\mathcal M$ of the moduli space $\mathcal M(X)$ is the space of all nodal hyperbolic surfaces modeled on $X$. We may use $\partial\mathcal M$ for simplicity. The Deligne-Mumford compactification is the union $$\overline{\mathcal M}=\mathcal M\cup\partial \mathcal M.$$ It is precisely the quotient of the augmented $\teich$ space $\overline{\mathcal T}$, which is obtained by allowing length parameters equal to 0, by the mapping class group. A stratum in the compactification is a maximal connected subset in which the points are all homeomorphic. See \cite{deligne1969irreducibility} and \cite{hubbard2014analytic} for more details.

Let $X_0\in\partial\mgn$ with the pinched curve set $S$ and call a neighborhood \textit{stratifically\ closed} if the pinched curve set of any point is a subset of $S$. Expand $S$ to a pants decomposition $\overline{S}=\{\alpha_i\}$, then the Fenchel-Nielsen coordinates are given by the associated length and twist parameters $\{l_i,\tau_i\}$. For a small neighborhood $U$ of $X$ that is stratifically closed, let $(U,\{k_ie^{i\tau_i}\})$ be a chart, with $k_ie^{i\tau_i}=l_i^\chi e^{i\tau_i}$ being the transition maps on the overlap, where $\chi=\frac12$ if $\alpha_i\in S$ and 1 otherwise. We call such a chart a \textit{nodal chart} at $X_0$. Different extensions are $C^\infty$-compatible by analytical compatibility of the Fenchel-Nielsen coordinates associated to different pants decompositions.

\begin{remark}
    The differential structure at such a nodal surface, defined using the root geodesic-length function $l_i^{\frac{1}{2}}$ (but not $l_i$) for each $\alpha_i$, is quite natural. Note that near the boundary, one has $\norm*{\nabla l_i^{\frac{1}{2}}}\approx\frac{1}{2\pi}$. Due to this estimate, on many occasions, it would be easier to consider $\nabla l_i^{\frac{1}{2}}$ instead of $\nabla l_i$, for example, in \cite{wolpert2008behavior}.
\end{remark}

Let $S=\{\beta_1,\cdots,\beta_s\}$ be a set of mutually disjoint simple closed geodesics (not necessarily shortest) on $X$. When $l_{\beta_i}$ tends to 0 for all $i$, by definition we have $X\to X_0\in\partial\mgn$ ending somewhere in the respective stratum. The limit nodal surface $X_0$ has a different topology and may not be connected off the nodes.

The tangent space of the compactification at $X_0\in\partial\mgn$ can be decomposed as $$T(X_0)=T^{\textit{Str}}(X_0)\oplus T^{\textit{Nod}}(X_0),$$ where stratum tangent subspace $T^{\textit{Str}}(X_0)$ is given by the length and twist parameters associated to $\overline{S}\setminus S$ and the nodal tangent subspace $T^{\textit{Nod}}(X_0)$ is given by those for $S$.

Under a nodal chart at some $X\in\partial\mgn$ with pinched curve set $S$ as above, the geodesic-length functions $l_{\beta_i}=k_{\beta_i}^2$ are $C^\infty$.

To extend $\syst$ to $\partial\mgn$, note that for fixed $T>0$, we have $$e^{-\frac1Tl_{\beta_i}}\to 1 \text{ as } l_{\beta_i}\to 0.$$

For any $\gamma$ that crosses some $\beta_i$, since $\sinh\frac{l_{\beta_i}}{2}\sinh\frac{l_\gamma}{2}>2\arcsinh1$, we have $$l_\gamma\to\infty \text{ and } e^{-\frac1Tl_\gamma}\to 0 \text{ as } l_{\beta_i}\to 0.$$
\begin{definition}
    For $X\in\partial\mgn$, let 
    \begin{align*}
        \syst(X)=&-T\log\left(s+\sum_{\gamma \text{ s.c.g. on } X} e^{-\frac1Tl_\gamma(X)}\right)\\
        =&-T\log\left(s+\sum_i e^{-\frac1T\syst(X_i)}\right),
    \end{align*}
    where $s=\# S$.
\end{definition}
It is known from previous discussion that $\syst$ is at least $C^2$-continuous on each stratum. Note that an extension of the Weil-Petersson metric from $\mgn$ to the compactification $\mgnb$, that equals the Weil-Petersson metric when restricted onto each stratum, does not exist. But given the continuity of $\syst|_{\mgnb}$, following the stratum-wise gradient flow, we know that it attains its global minimum on maximally pinched surfaces, and therefore is positively bounded on $\partial\mgn$. We write this as the following lemma.
\begin{lemma}
\label{positivityofsyst}
    The function $\syst$ on $\mgnb$ is bounded by positive numbers.
\end{lemma}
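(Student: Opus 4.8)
The plan is to obtain boundedness on $\mgnb$ from compactness together with the continuity of $\syst$, recording explicit bounds along the way. The upper bound is immediate: on $\mgn$, Theorem \ref{C0} gives $\syst(X) < \sys(X) \le \sys(g,n):=\max_{Y\in\mgn}\sys(Y) < \infty$; and for $X\in\partial\mgn$ the surface carries $s\ge 1$ nodes, so the argument of the logarithm in the definition of $\syst(X)$ is at least $s\ge 1$, whence $\syst(X)\le 0$. Thus $\syst\le \sys(g,n)$ on all of $\mgnb$.

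For the lower bound I would first record that $\syst$ is continuous on the compact space $\mgnb$: it is $C^2$ on each stratum by Theorem \ref{C0} and the discussion in Section \ref{extension}, and continuity across strata is exactly the limiting behaviour spelled out in Section \ref{extension} --- $e^{-l_{\beta_i}/T}\to 1$ along the pinching curves and $e^{-l_\gamma/T}\to 0$ for every $\gamma$ crossing some $\beta_i$ --- provided the tail over long geodesics is uniformly negligible, which follows from $c_X(L)\le ce^L$ and $T<1$ just as in the proof of Theorem \ref{C0}. A continuous function on a compact space is bounded, which already proves the lemma; moreover, as indicated before the statement, the stratum-wise Weil-Petersson gradient flow drives $\syst$ down to the $0$-dimensional strata, where every component is a thrice-punctured sphere and $\syst=-T\log(3g-3+n)$, which locates the infimum (this refinement is not needed here). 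If one prefers an effective bound that does not appeal to the topology of $\mgnb$, argue directly: on $\mgn$, $\syst(X) > \sys(X) - c'T > -c'T$ by Theorem \ref{C0}; and for $X=\coprod_i X_i \in\partial\mgn$ with $X_i\in\mathcal M_{g_i,n_i}$ smooth, Theorem \ref{C0} applied on each $\mathcal M_{g_i,n_i}$ gives $\syst(X_i) > -c_i'T$, so $e^{-\syst(X_i)/T} < e^{c_i'}$; since only finitely many types $(g_i,n_i)$ occur, with at most $2g-2+n$ components and $s\le 3g-3+n$ nodes, the argument of the logarithm in $\syst(X) = -T\log\bigl(s + \sum_i e^{-\syst(X_i)/T}\bigr)$ is at most a constant $N=N(g,n,T)$, whence $\syst(X)\ge -T\log N$. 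Taking $c'':=\max\{c'T,\,T\log N\}$ gives $-c''\le \syst \le \sys(g,n)$ on $\mgnb$, and both bounds are positive numbers.

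The lemma is soft and I do not expect a genuine obstacle; the one point needing care is the uniform smallness of the tail $\sum_{l_\gamma\ge L} e^{-l_\gamma/T}$, on which both the compactness argument (it is what makes $\syst$ continuous across strata) and the effective estimate rest --- and this is a direct reuse of the growth-rate bound already exploited for Theorem \ref{C0}. A minor bookkeeping point, in the effective version, is that the constants $c_i'$ must be taken uniformly over the finitely many component types $(g_i,n_i)$, which at the same time shows the recursion defining $\syst$ on $\partial\mgn$ terminates without circularity.
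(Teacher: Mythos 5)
Your proof is correct, and its primary line (continuity of $\syst$ on the compact space $\mgnb$ implies boundedness) is essentially the route the paper takes: the paper's justification is the one-sentence remark that, given the continuity of $\syst|_{\mgnb}$ and the stratum-wise gradient flow, the global minimum is attained on maximally pinched surfaces. Where you differ is in adding a self-contained quantitative argument that the paper does not give: the upper bound $\syst\le\sys(g,n)$ from Theorem \ref{C0} together with $\syst\le 0$ on $\partial\mgn$, and the lower bound obtained by applying Theorem \ref{C0} to each smooth component $X_i$ of a nodal surface and bounding the argument of the logarithm in $\syst(X)=-T\log\bigl(s+\sum_i e^{-\syst(X_i)/T}\bigr)$ by a constant $N$ (which, incidentally, does not depend on $T$, since the $c_i'$ do not). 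This effective version is arguably preferable here, because it does not invoke continuity of $\syst$ across strata (established only afterwards, in Lemma \ref{derivativeextension}) nor the gradient flow, and it requires no recursion since the components $X_i$ are already smooth. You also implicitly settle the right reading of ``bounded by positive numbers'': since $\syst\le 0$ on $\partial\mgn$, the lemma can only mean that $|\syst|$ is bounded by a finite positive constant, which is exactly what your two-sided estimate $-c''\le\syst\le\sys(g,n)$ delivers.
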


\begin{lemma}
\label{derivativeextension}
    For $T$ sufficiently small, $\syst\colon \mgnb\to\mathbb R$ is $C^2$-continuous, under the differential structure given by nodal charts.
\end{lemma}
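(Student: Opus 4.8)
The plan is to reduce $C^2$-continuity of $\syst$ on $\mgnb$ to three separate facts: (a) $C^2$-continuity on each single stratum, which is already established by Theorem \ref{C0}, Lemma \ref{C1tail}, and Lemma \ref{C2tail}; (b) $C^2$-continuity in the ``nodal directions'' as one approaches a boundary stratum; and (c) compatibility of these along mixed derivatives (stratum direction against nodal direction). Working in a nodal chart at $X_0\in\partial\mgn$ with pinched set $S=\{\beta_1,\dots,\beta_s\}$ and coordinates $\{k_ie^{i\tau_i}\}$ where $k_i=l_{\beta_i}^{1/2}$, the geodesic-length functions $l_{\beta_i}=k_i^2$ are $C^\infty$; the whole point is that the bad blow-up of the Weil--Petersson metric is invisible to $\syst$ because $\syst$ sees $l_{\beta_i}$ only through $e^{-l_{\beta_i}/T}$, which is $C^\infty$ in $k_i$, and sees geodesics $\gamma$ crossing some $\beta_i$ only through $e^{-l_\gamma/T}$, which decays like a power of $k_i$ by the collar estimate $\sinh\tfrac{l_{\beta_i}}{2}\sinh\tfrac{l_\gamma}{2}>1$, hence $l_\gamma\gtrsim -2\log k_i$ and $e^{-l_\gamma/T}\lesssim k_i^{2/T}$. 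Choosing $T$ small (say $2/T\ge 3$, or larger to absorb any algebraic factors from summing over $\gamma$ and from two derivatives) makes each such term, together with its first two $k_i$-derivatives, extend continuously by $0$ across $\{k_i=0\}$.

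The key computation is therefore a term-by-term analysis of
$$\syst(X)=-T\log\Bigl(s+\sum_{\gamma\text{ s.c.g.}}e^{-\frac1Tl_\gamma(X)}\Bigr),$$
splitting the sum into $\gamma$ disjoint from $\bigcup\beta_i$ (these are exactly the simple closed geodesics living on the pieces $X_i$ of the limit $X_0$, and contribute the ``on-stratum'' part $\sum_i e^{-\frac1T\syst(X_i)}$ in the limit), versus $\gamma$ crossing some $\beta_i$. For the first group, each $e^{-l_\gamma/T}$ and its derivatives along both stratum and nodal directions extend continuously to the stratum; for the nodal variable $k_i$ specifically, a geodesic $\gamma$ disjoint from $\beta_i$ lies in a subsurface on which the Fenchel--Nielsen coordinates not involving $\beta_i$ are intrinsic, so $l_\gamma$ is actually independent of $k_i$ for $k_i$ small, killing those derivatives outright. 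For the second group, one differentiates $e^{-l_\gamma/T}$ up to twice, using the Wolpert bounds of Theorem \ref{lengthgradientnorm} (uniformity holds once $l_\gamma$ is large, which it is near the node) to see $\|\nabla l_\gamma\|$ and $\nabla^2l_\gamma$ grow at most like $l_\gamma e^{l_\gamma/2}$, while the factor $e^{-l_\gamma/T}$ with $1/T>1$ wins; combined with $l_\gamma\gtrsim-2\log k_i$ this yields a bound $O(k_i^{\,2/T-1}\cdot\mathrm{polylog})$ on each differentiated term, hence an $O(k_i^{2/T-2}\cdot\mathrm{polylog})$ bound after two nodal derivatives, which $\to0$ as $k_i\to0$ for $T$ small. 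Convergence of the full sum and its derivative sums is controlled exactly as in the proofs of Theorem \ref{C0} and Lemmas \ref{C1tail}, \ref{C2tail}, using $c_X(L)\le ce^L$; one must check the geometric-series tail estimates remain uniform as $X\to X_0$, which they do since $\secsys$ stays bounded below on a stratifically closed neighborhood. Finally, since $\syst$ is positively bounded below on $\mgnb$ by Lemma \ref{positivityofsyst}, the outer $-T\log(\,\cdot\,)$ is a $C^\infty$ function of its (bounded, bounded-away-from-zero) argument, so the chain rule transfers $C^2$-continuity from the argument to $\syst$ itself.

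The main obstacle I expect is (c), the mixed second derivatives: one must show that differentiating once in a stratum direction and once in a nodal direction $\partial_{k_i}$ also extends continuously across $k_i=0$, and here one cannot simply invoke ``$l_\gamma$ independent of $k_i$'' for the crossing geodesics. The resolution is the same power-of-$k_i$ decay as above — the mixed derivative of $e^{-l_\gamma/T}$ produces terms with at most one factor each of a stratum-direction derivative of $l_\gamma$ and the $k_i$-derivative of $l_\gamma$, both polynomially bounded against $e^{l_\gamma/2}$ by Theorem \ref{lengthgradientnorm} (and, for the nodal derivative, by Theorem \ref{connection} / the estimate $\|\nabla l_{\beta_i}^{1/2}\|\approx\tfrac1{2\pi}$ controlling how $l_\gamma$ varies with $k_i$), so the prefactor $e^{-l_\gamma/T}\lesssim k_i^{2/T}$ again dominates for $T$ small. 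A secondary point of care is that the extension must be checked to be independent of the choice of pants decomposition $\overline S\supset S$, but this is immediate from the $C^\infty$-compatibility of nodal charts already recorded in the construction of the differential structure in Section \ref{extension}. Assembling (a), (b), (c) and the chain rule through $-T\log$ completes the proof.
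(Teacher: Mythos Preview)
Your overall strategy matches the paper's: split geodesics into the pinched curves themselves, curves disjoint from the pinched set, and curves crossing it; handle the first trivially via $l_{\beta_i}=k_i^2$; handle the third by combining the collar lower bound $l_\gamma\ge -2\log l_{\beta_i}$ with Wolpert's gradient, Hessian, and connection bounds (Theorems~\ref{lengthgradientnorm} and~\ref{connection}) so that $e^{-l_\gamma/T}$ and its first two derivatives vanish as $k_i\to0$.

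However, your treatment of the second type contains a genuine error. You assert that for $\gamma$ disjoint from $\beta_i$, ``$l_\gamma$ is actually independent of $k_i$ for $k_i$ small, killing those derivatives outright.'' This is false: the subsurface carrying $\gamma$ has $\beta_i$ as a boundary component, and its hyperbolic structure---hence $l_\gamma$---genuinely varies with the boundary length $l_{\beta_i}=k_i^2$. (What \emph{is} true, and what the paper records, is that $l_\gamma$ does not depend on the twist $\tau_{\beta_i}$.) The paper handles this case by writing $l_\gamma=2\arccosh\bigl(\tfrac12\,\text{tr}(\Pi A_j)\bigr)$ via a matrix-tracking argument, with the $A_j$ smooth in the Fenchel--Nielsen parameters including $l_{\beta_i}$; smoothness of $l_\gamma$ in $l_{\beta_i}$ then transfers to smoothness in $k_i$ since $l_{\beta_i}=k_i^2$. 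Alternatively one can read off from Theorem~\ref{lengthgradientshort} that $\langle\nabla l_\gamma,\lambda_i\rangle=O(l_{\beta_i}^{3/2})\to0$, and from Theorem~\ref{connection} the analogous second-order vanishing, so the nodal derivatives of $l_\gamma$ do extend continuously by zero---the conclusion you wanted, but not for the reason you gave.
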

\begin{proof}
    It is already shown that $\syst$ is $C^2$-continuous in the interior and continuous globally by its definition. Let $X\to X_0\in\partial\mgn$ with pinched curve set $S=\{\beta_1,\cdots,\beta_s\}$. Recall that the first derivative in $\mgn$ is given by $$\syst'=\frac{\sum_\gamma e^{-\frac1Tl_\gamma}l_\gamma'}{\sum_\gamma e^{-\frac1Tl_\gamma}},$$ and the second derivative
\begin{align*}
\syst''= \frac1T\left(\frac{\sum_\gamma e^{-{\frac1T}l_\gamma}l_\gamma'}{\sum_\gamma e^{-{\frac1T}l_\gamma}}\right)^2-\frac1T\frac{\sum_\gamma e^{-{\frac1T}l_\gamma}(l_\gamma')^2}{\sum_\gamma e^{-{\frac1T}l_\gamma}} +\frac{\sum_\gamma e^{-{\frac1T}l_\gamma}l_\gamma''}{\sum_\gamma e^{-{\frac1T}l_\gamma}}.
\end{align*}

Thus, it suffices to show that $e^{-\frac1T}l'$ and $e^{-\frac1T}l''$ converge when the base point approaches the boundary. There are three types of geodesics (cf. Figure \ref{fig:all types of gamma}), and we consider each of them separately.
\begin{figure}[ht]
    \centering
    \includegraphics[width=10cm]{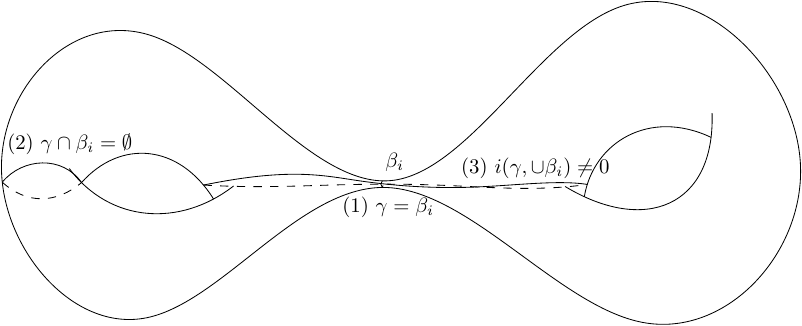}
    \caption{Three types of $\gamma$}
    \label{fig:all types of gamma}
\end{figure}

Type 1: $\gamma=\beta_i$. It is clear by definition that $l=k_{\beta_i}^2$ is $C^\infty$.

Type 2: $\gamma\cap\beta_i=\emptyset$. We have $l_\gamma=2\arccosh(\frac{\text{tr}(\Pi A_i)}{2})$ by a tracking algorithm by unfolding the surface, where each $A_i$ is a rotation or translation matrix smooth in Fenchel-Nielsen coordinates and not involving $\tau_{\beta_i}$'s.

Type 3: $i(\gamma,\cup\beta_i)\neq0$. Since they intersect as closed geodesics, we have $$l_\gamma\ge-2\log l_{\beta_i}.$$ Let $\gamma_i$'s be a few geodesics disjoint from $\cup \beta_i$, such that $\{\lambda_i=\nabla l_{\beta_i}^{\frac12},J\lambda_i,\nabla l_i\}$ is a local frame.

For such $\gamma$, for the first directional derivatives, note that by Theorem \ref{lengthgradientnorm}
\begin{align*}
    e^{-\frac1Tl_\gamma}\left|\left\langle\nabla l_\gamma,\lambda_i\right\rangle\right|,e^{-\frac1Tl_\gamma}|\left\langle\nabla l_\gamma,J\lambda_i\right\rangle|&\le e^{-\frac1Tl}\|\nabla l_\gamma\|\cdot\|\lambda_i\|\\
    \le e^{-\frac1Tl_\gamma}\left(c\left(l_\gamma+l_\gamma^2e^{\frac{l_\gamma}{2}}\right)\right)^{\frac12}\cdot c\to0,
\end{align*}
and
\begin{align*}
    e^{-\frac1Tl_\gamma}\left|\left\langle\nabla l_\gamma,\nabla l_i\right\rangle\right|&\le e^{-\frac1Tl}\|\nabla l_\gamma\|\cdot\|\nabla l_i\|\\
    &\le e^{-\frac1Tl_\gamma}\left(c\left(l_\gamma+l_\gamma^2e^{\frac{l_\gamma}{2}}\right)\right)^{\frac12}\cdot \left(c\left(l_i+l_i^2e^{\frac{l_i}{2}}\right)\right)^{\frac12}\to0,
\end{align*}
as $l_{\beta_i}\to0$.

For the second directional derivatives, note that by Theorem \ref{connection}
\begin{align*}
    \norm*{e^{-\frac1Tl_\gamma}XYl_\gamma}&= \norm*{e^{-\frac1Tl_\gamma}\left(H(l_\gamma)(X,Y)+\left\langle D_XY,\nabla l_\gamma\right\rangle\right)}\\
    &\le e^{-\frac1Tl_\gamma}\left(\norm*{H(l_\gamma)(X,Y)}+\norm*{D_XY}\cdot\norm*{\nabla l_\gamma}\right)\\
    &\le e^{-\frac1Tl_\gamma}\left(\left(4c(1+l_\gamma e^{\frac{l_\gamma}{2}}\right)+O\left(l_\gamma^3\right)\right)\norm*{X}\cdot\norm*{Y}+\frac{c}{l_{\beta_i}^\frac12}\cdot\norm*{\nabla l_\gamma})\to0,
\end{align*}
where $X,Y$ are two local vector fields near $X_0$.

Therefore, $e^{\frac1Tl_\gamma}$ is $C^2$-continuous on the augmented $\teich$ space, and the sum of it over the orbit of $\gamma$ is $C^2$-continuous on the Deligne-Mumford compactification.

This completes the proof.

\end{proof}

The critical point set $\crit(\syst|_{\mgnb})$ is relatively clear, restricted in $\mgn$. For critical points in a boundary stratum $\mathcal S\subset\partial\mgn$, they correspond exactly to the critical points for $\syst|_{\mathcal M_{\mathcal S}}$, where $\mathcal M_{\mathcal S}$ is the moduli space that is canonically isomorphic to $\mathcal S$, as the $\syst$ functions differ only by a constant up to the same homeomorphism of $\mathbb R$, under the identification $\mathcal S\leftrightarrow\mathcal M_{\mathcal S}$. The $T^{\textit{Str}}$-directional derivatives are 0 since $\nabla\syst|_{\mathcal S}$ is parallel to $\nabla\syst|_{\mathcal M_{\mathcal S}}$ under the identification, and so are $T^{\textit{Nod}}$-directional derivatives following Remark \ref{boundarydirection}.

On a nodal chart $(U,\{k_ie^{i\tau_i}\})$, the second derivative matrix is

\begin{align*}
\begin{pmatrix}
2I & 0\\
0 & \ast
\end{pmatrix},
\end{align*}
where $\ast$ is the second derivative matrix of $\syst$
restricted on $T^{\textit{Str}}$ that is nondegenerate. This shows that critical points on $\partial\mgn$ are nondegenerate.

Based on the discussion above, we can give a description of critical points in the boundary $\partial\mgn$. Let $\mathcal S\subset\partial\mgn$ be a stratum in the boundary, that is canonically isomorphic to the product $\prod\mathcal M_i$ of moduli spaces. Any critical point $X\in\mathcal S$ is a nodal surface and has the corresponding decomposition $X=\cup X_i$ along the nodes, such that each $X_i$ is critical in $\mathcal M_i$, and $$\ind(X)=\sum\ind(X_i),$$ where $\ind(X_i)$ is the index of $X_i$ for $\syst|_{\mathcal M_i}$. In this way, one can decompose a critical point in the boundary to smaller surfaces that are critical in their respective moduli spaces. The converse of the statement also holds. One can also construct a critical point by connecting smaller critical points along nodes.

\begin{figure}[ht]
    \centering
    \includegraphics[width=9cm]{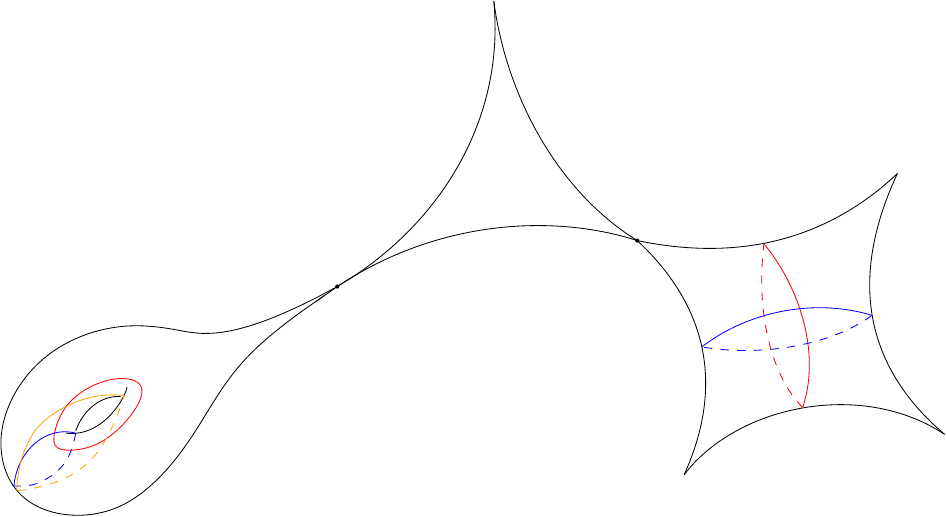}
    \caption{A critical point in $\partial\mathcal M_{1,4}$ that is the union of smaller critical points in $\mathcal M_{1,1},\mathcal M_{0,3}$ and $\mathcal M_{0,4}$ respectively along nodes}
    \label{fig:critical point in boundary}
\end{figure}

The surface $X$ in Figure \ref{fig:critical point in boundary} lives in $\mathcal S=\mathcal S_1\times\mathcal S_2\times\mathcal S_3\subset\partial\mathcal M_{1,4}$, where $\mathcal S_1\cong\mathcal M_{1,1}$, $\mathcal S_2\cong\mathcal M_{0,3}$ and $\mathcal S_3\cong\mathcal M_{0,4}$. Each subsurface $X_i\in\mathcal S_i$ off the nodes is a critical point for $\syst|_{\mathcal M_{\mathcal S_i}}$ with $S(X_i)$ marked in colors. The index of $X$ is $$\ind(X)=\sum\ind(X_i)=1+0+1=2.$$

\begin{theorem}
    $\syst\colon \mgnb\to\mathbb R$ is Morse with the given differential structure.
\end{theorem}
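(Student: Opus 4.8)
The plan is to patch the local results of Sections~\ref{critptsthry}--\ref{extension} together with a single, uniformly chosen~$T$. By Lemma~\ref{derivativeextension}, for $T$ below some threshold $\syst$ is $C^2$ on $\mgnb$ in the differential structure furnished by nodal charts, so what remains is to produce $T_0>0$ such that for all $0<T<T_0$ every critical point of $\syst$ on $\mgnb$ is nondegenerate. I would verify this stratum by stratum along the Deligne--Mumford stratification, note that each stratum contributes its own upper bound on $T$, and take $T_0$ to be the minimum of the finitely many bounds involved.

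First I would treat the open stratum, which is $\mgn$ itself, establishing for \emph{every} $(g,n)$ that $\syst$ is Morse on $\mgn$ once $T$ is small. By the theorem on surfaces near the boundary (Subsection~\ref{boundary}) there is $\epsilon>0$ below whose systole level every surface is $\syst$-regular for $T$ small, so no critical points occur there. On the thick part $\mathcal M^{\ge\epsilon}$, compact by Mumford's criterion, every surface is biased, semi-eutactic, or eutactic in the sense of Definition~\ref{eutactic}; around biased and around semi-eutactic surfaces the two theorems of Subsection~\ref{regular} give neighborhoods of regular points for $T$ small, and around a eutactic surface $p$ the Existence Theorem~\ref{main}, the theorem forbidding critical points at distance in $[\rho T,r_p]$, the Uniqueness Theorem~\ref{unique}, and the nondegeneracy of the Hessian at $p_T$ together show that a fixed ball $B(p,r_p)$ contains a unique critical point and that it is nondegenerate. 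These sets form an open cover of $\mathcal M^{\ge\epsilon}$; I would extract a finite subcover, leaving only finitely many local thresholds, and observe that since the regular neighborhoods contain no critical points, every critical point of $\syst$ in $\mathcal M^{\ge\epsilon}$ lies in one of the chosen balls and coincides there with the ball's unique critical point $p_T$, which is nondegenerate.

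Next I would handle a deeper stratum $\mathcal S$, canonically isomorphic to a product $\prod_i\mathcal M_i$ of moduli spaces, on which $\syst|_{\mathcal S}$ agrees up to a fixed increasing homeomorphism of $\mathbb R$ with the product-type function $-T\log\bigl(s+\sum_i e^{-\syst(X_i)/T}\bigr)$; applying the conclusion of the previous paragraph to each factor $\mathcal M_i$ shows $\syst|_{\mathcal S}$ is Morse on $\mathcal S$ for $T$ small. It remains to relate its critical points to those of $\syst$ on $\mgnb$. In a nodal chart around a point $X_0\in\mathcal S$ with pinched curves $\beta_i$ and coordinates $z_i=k_{\beta_i}e^{i\tau_i}$, we have $l_{\beta_i}=|z_i|^2$, so the $\beta_i$ enter $\sum_\gamma e^{-l_\gamma/T}$ through the smooth terms $e^{-|z_i|^2/T}$; a geodesic disjoint from all $\beta_i$ has length independent of the $z_i$ (type~2 in the proof of Lemma~\ref{derivativeextension}) and a geodesic crossing some $\beta_i$ contributes a term with vanishing first and second derivatives as $z_i\to0$ (type~3, by the estimates there). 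Consequently the Hessian of $\syst$ at $X_0$ in this chart is block diagonal,
\[
\begin{pmatrix} 2I & 0 \\ 0 & \ast \end{pmatrix},
\]
with a positive-definite block in the pinching directions and $\ast$ the Hessian of $\syst|_{\mathcal S}$; moreover the $T^{\textit{Nod}}$-directional derivatives of $\syst$ vanish along $\mathcal S$ by Remark~\ref{boundarydirection}. Hence the critical points of $\syst$ on $\mgnb$ that lie in $\mathcal S$ are exactly those of $\syst|_{\mathcal S}$, and the block form makes them nondegenerate. Since the stratification has finitely many strata, taking $T_0$ to be the minimum of all their thresholds (and of the interior threshold) completes the argument.

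The step I expect to be the genuine obstacle is the uniformity of $T$: each ingredient---near-boundary regularity, the local pictures at biased, semi-eutactic, and eutactic surfaces, and the product Morse property on a stratum---carries its own upper bound on $T$, and the argument works only because one needs just finitely many of them. That finiteness comes precisely from compactness of $\mgnb$ and of the thick parts appearing for the factors $\mathcal M_i$, together with the finiteness of the Deligne--Mumford stratification; once this bookkeeping is arranged, the rest is the (long but routine) assembly carried out in the preceding sections.
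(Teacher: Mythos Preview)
Your proposal is correct and follows the same route as the paper: assemble the local results of Sections~\ref{critptsthry}--\ref{boundary} on the open stratum, then use the block-diagonal Hessian in nodal charts (exactly the $\begin{psmallmatrix}2I&0\\0&\ast\end{psmallmatrix}$ form the paper records) to handle boundary strata. The paper leaves the theorem essentially unproved on the page, treating it as the summary of the preceding discussion; your write-up is more explicit, in particular about extracting a finite subcover of the thick part and taking a minimum over the finitely many strata to secure a single uniform $T_0$, which the paper does not spell out.
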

Part II of the main theorem follows from the definition of the extension by direct calculation. Part III holds for the extended $\syst$ for similar reasons as in Subsections \ref{behavior}-\ref{boundary}.

\section{Weil-Petersson gradient flow}
\label{case}
As the systole function is only continuous, one cannot define the Weil-Petersson gradient flow. Although the Weil-Petersson gradient `direction' at each point can be considered instead, it turns out that this is a discontinuous distribution in the tangent space. The $\syst$ functions are $C^2$-continuous, so the gradient vector is defined in $\tgn$ wherever the Weil-Petersson metric is defined.

In the last section, the proof of Lemma \ref{derivativeextension} also shows that the Weil-Petersson gradient vector of $\syst$ at any point in $\mgnb$ is well defined, although the connection is not. It is easy to see from the definition of the extension that starting with a direction at a point in any stratum, the gradient flow will locally stay in the same stratum and never go to an upper stratum.

Recall that the gradient vector of $\syst$ is given by $$\nabla\syst(X)=\frac{\sum_\gamma e^{-\frac1Tl_\gamma(X)}\nabla l_\gamma(X)}{\sum_\gamma e^{-\frac1Tl_\gamma(X)}}.$$

Suppose that a surface $X_0$ in the boundary of the augmented $\teich$ space $\overline{\mathcal T}_{g,n}$ is given with a pinched curve $\beta$, we can split the numerator above into three parts by contribution: (1) $\beta$, (2) curves disjoint from $\beta$, and (3) curves crossing $\beta$. Along the pinching ray of $\beta$ to the boundary of the augmented $\teich$ space, the contribution (3) is negligible and dominated by (1), as shown in the calculation in the proof of Lemma \ref{derivativeextension}. Along the downward flow line of $\syst$, using the nodal chart, the distance to the boundary locally satisfies
$$\frac{d}{dt}\left(l_\beta^{\frac{1}{2}}\right)=-\sqrt{\frac{2}{\pi}}l_\beta^{\frac12}+O\left(l_\beta^2\right),$$
when $l_\beta<\epsilon$. Therefore, $l_\beta\asymp e^{-2\sqrt{\frac{2}{\pi}}t}$.

Part IV follows from the construction and observation above.

An example of the Weil-Petersson gradient flow of $\syst$ on $\overline{\mathcal M}_{1,1}$ will be given in the next section.

\section{Applications}
\label{applications}

\subsection{Cell decomposition of $\mgnb$ and $\mgn$}
Although the Weil-Petersson metric blows up when it approaches the Deligne-Mumford boundary $\partial\mgn$, the gradient flow of $\syst$ is well defined. If one considers the $\syst$ functions on a finite manifold cover of $\mgnb$, by a finite quotient $G$ of the mapping class group, a $G$-invariant Morse handle decomposition can be constructed, which places a $k$-handle at each index $k$ critical point.

The $G$-invariance of the $\syst$-Morse handle decomposition defines a $G$-action on the set of handles. For a $k$-handle $H^k$ at a critical point $q_k$, one can take the quotient of $\text{Orb}(H^k)$ by $G$. This gives a quotient handle $H^k/\text{Stab}_G(q^k)$ at $\pi(q^k)$, where $\pi$ is the quotient map. One may compare this with an orbifold chart for the orbifold locus containing $\pi(q^k)$.

Note that a $k$-handle $H^k$ is diffeomorphic to a $k$-cell times a codimension $k$ disk. On the cover, for every $H^k$, a $G$-invariant refinement of $H^k$ by cells, that is compatible with orbifold loci and attaching maps, gives a cell decomposition of the cover that descends to a cell decomposition of $\mgnb$. Alternatively, one may directly take a refinement of the Morse `quotient handle' decomposition by cells to get a cell decomposition of $\mgnb$.

\subsection{Moduli of elliptic curves}

The moduli space $\mathcal M_{1,1}$ of elliptic curves, or equivalently, complete hyperbolic once-punctured tori, has been extensively studied by many. With Morse theory, we are able to obtain a new perspective.

There are two special points in $\mathcal M_{1,1}$. Let $X_1$ be the hyperbolic representative in the conformal class of the once-punctured torus obtained by gluing parallel sides of a punctured square, and $X_2$ be that for a punctured diamond with a $\frac{\pi}{3}$ interior angle. They are exactly the two $\syst$-critical points, as well as the orbifold points: $X_1$ is a double point, and has Morse index 1; $X_2$ is a triple point, and has Morse index 2.

The Deligne-Mumford boundary of $\mathcal M_{1,1}$ consists of a single point $X_0$, that is the once-noded once-punctured torus, that can be identified with a three-time-punctured sphere. $X_0$ is a $\syst$-critical point as well as an orbifold point. It is a double point, and its Morse index is 0.

Since $\overline{\mathcal M}_{1,1}$ is an orbifold with a triple point and two double points, we shall take the 6-sheeted cover, that we denote by $\overline{\mathcal M}_{1,1}^6$, to resolve the singularities. By Part IV of the main theorem, the Weil-Petersson gradient flow of $\syst$ is well defined on $\overline{\mathcal M}_{1,1}$, therefore, on $\overline{\mathcal M}_{1,1}^6$ as well, as shown below.

\begin{figure}[ht]
    \centering
    \includegraphics[width=6.5cm]{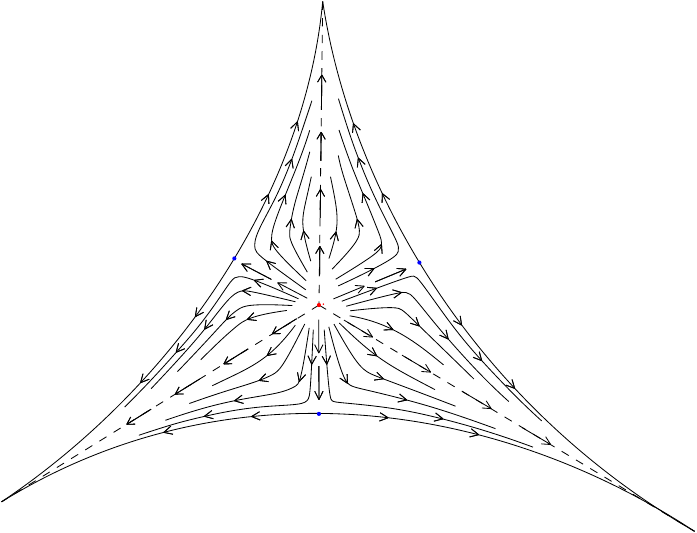}
    \caption{The `front view' of Weil-Petersson gradient flow of $\syst$ on $\overline{\mathcal M}_{1,1}^6$; the `back view' looks the same}
\end{figure}

Let $\pi\colon \overline{\mathcal M}_{1,1}^6\to \overline{\mathcal M}_{1,1}$ be the covering projection. Let $\pi^{-1}(X_2)=\{\alpha,\beta\}$, $\pi^{-1}(X_1)=\{a_1,a_2,a_3\}$ and $\pi^{-1}(X_0)=\{b_1,b_2,b_3\}$, then up to a choice of orientation, the Morse differentials are given by

\begin{align*}
    &\partial_2\alpha=a_1+a_2+a_3,\\
    &\partial_2\beta=-a_1-a_2-a_3,\\
    &\partial_1a_1=b_2-b_3,\\
    &\partial_1a_2=b_3-b_1,\\
    &\partial_1a_3=b_1-b_2.
\end{align*}

This defines the $\syst$-Morse chain complex
$$\begin{tikzcd}
0 \arrow{r} &\mathbb Z^2 \arrow{r}{\partial_2} &\mathbb Z^3 \arrow{r}{\partial_1} &\mathbb Z^3 \arrow{r} &0.
\end{tikzcd}$$

It would not be difficult to compute the Morse homology of $\overline{\mathcal M}_{1,1}^6$.
\begin{align*}
    H_2(\overline{\mathcal M}_{1,1}^6)=\mathbb Z,\\
    H_1(\overline{\mathcal M}_{1,1}^6)=0,\\
    H_0(\overline{\mathcal M}_{1,1}^6)=\mathbb Z.
\end{align*}

Using transfer injection on homology groups, it follows that $H_k(\overline{\mathcal M}_{1,1};\mathbb Q)=\mathbb Q$ when $k=0,2$ and 0 otherwise.

The figure below shows the $\syst$-Morse handle decomposition.

\begin{figure}[ht]
    \centering
    \includegraphics[width=6.5cm]{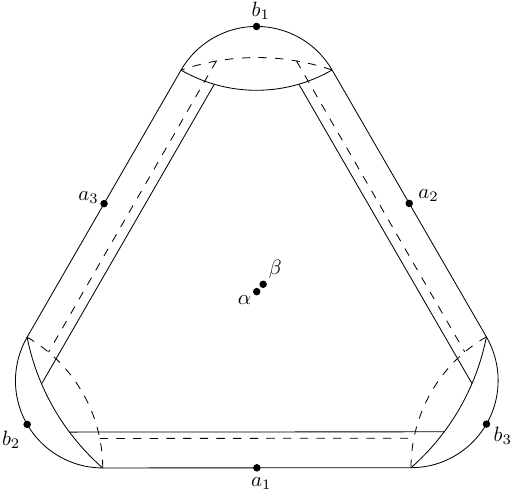}
    \caption{The $\syst$-Morse handle decomposition of $\overline{\mathcal M}_{1,1}^6$ with respect to the Weil-Petersson metric}
\end{figure}

\subsection{Calculating first homology groups of moduli spaces of curves via Morse theory}
The title of this subsection imitates the title of \cite{arbarello1998calculating} that calculates cohomology via algebraic geometry. Although more properties of the $\syst$ functions are needed for calculating higher degree (co)homology groups, the first (co)homology is within reach. We cite the theorem in the paper \cite{chen2025} that is being written that computes the first rational homology using the $\syst$ functions. This result can be found in earlier literature, for example, \cite{boggi2000galois}, \cite{farb2011primer}.
\begin{theorem}
    $H_1(\mgnb;\mathbb Q)=0$.
\end{theorem}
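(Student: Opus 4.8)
The plan is to deduce $H_1(\mgnb;\mathbb Q)=0$ from the $\syst$-Morse theory established in the previous sections, using the Morse chain complex over $\mathbb Q$ on a suitable finite manifold cover and then descending by transfer, exactly as in the $\overline{\mathcal M}_{1,1}$ example worked out in Subsection~\ref{case}. Concretely, fix a finite-index torsion-free normal subgroup of the mapping class group (which exists, e.g.\ by Serre's lemma, since $\textit{Mod}$ is residually finite and virtually torsion-free), giving a finite regular cover $\widehat{\mathcal M}\to\mgnb$ with deck group $G$ where $\widehat{\mathcal M}$ is a closed manifold (orbifold charts resolved). By Part~I of the Main Theorem, $\syst$ is a $G$-invariant $C^2$-Morse function on $\widehat{\mathcal M}$, and by Part~IV its Weil-Petersson gradient flow is well defined, so the $\syst$-Morse chain complex $C_*^{\syst}(\widehat{\mathcal M};\mathbb Q)$ computes $H_*(\widehat{\mathcal M};\mathbb Q)$. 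Since $H_1(\mgnb;\mathbb Q)$ injects into $H_1(\widehat{\mathcal M};\mathbb Q)$ via the transfer, it suffices to control the degree-$0$ and degree-$1$ part of this complex, i.e.\ the index-$0$ and index-$1$ critical points and the differential between them.

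The first substantive step is to classify the index-$0$ and index-$1$ critical points of $\syst$ on $\mgnb$ (hence on $\widehat{\mathcal M}$, by pulling back). By Part~II, critical points correspond to eutactic points $p$ of the systole function with $\ind(p_T)=\rank\{\nabla l_\gamma\}_{\gamma\in S(p)}$, and by Part~III a critical point in a boundary stratum $\prod\mathcal M_i$ has index $\sum\ind(X_i)$. An index-$0$ critical point is thus a eutactic point whose minimal gradients span the zero subspace --- impossible in $\mgn$ since eutacticity forces at least one $\nabla l_i\neq 0$ with $0$ in the interior of their span --- so all index-$0$ critical points are maximally-pinched nodal surfaces (unions of thrice-punctured spheres), on which $\syst$ attains its global minimum by Lemma~\ref{positivityofsyst}; there are finitely many. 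An index-$1$ critical point is a eutactic point with $\rank\{\nabla l_\gamma\}_{\gamma\in S(p)}=1$, i.e.\ exactly (up to positive scaling) a pair $\nabla l_\gamma=-c\,\nabla l_{\gamma'}$, located in a stratum where all but one factor is a maximally-pinched piece and the remaining factor carries a rank-$1$ eutactic point. The key point is that the number of $G$-orbits of index-$\le 1$ critical points is finite and combinatorially enumerable.

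Next I would compute $H_0$ and $H_1$ of the Morse complex. For $H_0$: the index-$0$ critical points are connected by index-$1$ flow lines, and $\widehat{\mathcal M}$ is connected, so $H_0^{\syst}(\widehat{\mathcal M};\mathbb Q)=\mathbb Q$ and $\partial_1\colon C_1\to C_0$ is surjective onto the augmentation kernel --- this is automatic. For $H_1$: one must show $\ker\partial_1=\operatorname{im}\partial_2$, i.e.\ that every $\partial_1$-cycle of index-$1$ critical points bounds. Here I would argue geometrically: an index-$1$ critical point $p_T$ sits on a stratum isomorphic to (maximally-pinched piece) $\times\,\mathcal M_i$ where $\mathcal M_i$ is a small moduli space ($\mathcal M_{1,1}$ or $\mathcal M_{0,4}$-type), and its two descending flow lines limit to two maximally-pinched surfaces differing by a single elementary move (opening a node one way vs.\ the other, as in $\partial_1 a_i = b_j - b_k$ in the $\overline{\mathcal M}_{1,1}$ example); the relations among maximally-pinched surfaces generated by these moves, modulo the relations coming from index-$2$ critical points, are exactly the relations in a presentation of $H_1$ of the curve complex / the ``Deligne-Mumford dual graph complex,'' which is known to kill $H_1$. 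Equivalently, one invokes known connectivity of the complex of stable curves to see that the $1$-cycles are all boundaries. Finally, transfer injectivity gives $H_1(\mgnb;\mathbb Q)\hookrightarrow H_1(\widehat{\mathcal M};\mathbb Q)^{G}=0$.

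The main obstacle is the $H_1$-vanishing step, i.e.\ showing $\ker\partial_1\subseteq\operatorname{im}\partial_2$: this requires understanding index-$2$ critical points and the index-$1$/index-$2$ part of the differential well enough to see that they impose all the relations needed among the maximally-pinched surfaces. This is precisely where ``more properties of the $\syst$ functions are needed'' (as flagged in the text), and is where one must either push the combinatorial analysis of eutactic points of rank $\le 2$ together with a connectivity statement for the stratified boundary, or else shortcut via a comparison with an independently known presentation. The finiteness and enumeration of low-index critical points (Steps~1--2) are routine given Parts~II and~III; the flow-line/relation bookkeeping in Step~3 is the delicate part, and the honest reference here is the forthcoming \cite{chen2025}.
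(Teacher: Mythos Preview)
The paper does not actually prove this theorem: it simply states the result and cites the forthcoming \cite{chen2025} together with earlier references (\cite{boggi2000galois}, \cite{farb2011primer}). So there is no ``paper's own proof'' to compare your proposal against.

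That said, your outline is a faithful and reasonable reconstruction of the strategy the paper gestures at. Your classification of index-$0$ critical points as exactly the maximally pinched nodal surfaces is correct (by Parts~II and~III, a primitive critical point has index $\ge 1$, and additivity forces every component to contribute zero), and your description of index-$1$ critical points as living over strata of the form (maximally pinched)$\times$(one small factor with a rank-$1$ eutactic point) is the right picture. The transfer argument reducing to a finite manifold cover is also the mechanism the paper uses in the $\overline{\mathcal M}_{1,1}$ example.

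You are also honest about the genuine gap: the step $\ker\partial_1\subset\operatorname{im}\partial_2$ is where all the content lies, and your appeal to ``known connectivity of the complex of stable curves'' or to a ``presentation of $H_1$ of the curve complex'' is not a proof but a pointer to one. If you grant yourself such a connectivity or presentation result, you are effectively importing $H_1(\mgnb;\mathbb Q)=0$ from elsewhere, which defeats the purpose of deriving it from the $\syst$-Morse complex. A self-contained argument really does require the enumeration of index-$2$ critical points and the computation of $\partial_2$, which is exactly what the paper defers to \cite{chen2025}. One minor inaccuracy: a rank-$1$ eutactic configuration need not consist of only two gradients $\nabla l_\gamma=-c\,\nabla l_{\gamma'}$; it can have several collinear gradients, though this does not affect the index count.
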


\subsection{Low degree homology groups}
    For convenience, we may say that a moduli space $\mgn$ or the compactification $\mgnb$ is \textit{large} (or \textit{small}), if its dimension is large (or small).
\begin{definition}
    A critical point $X\in\mgnb$ is called \textit{primitive}, if it is in $\mgn$. That is, $X$ is a smooth surface.
\end{definition}
In \cite{chen2023c2}, the following property of the $\syst$ functions is shown, as a result of hyperbolic geometry.
\begin{theorem}
    For any $k\in\mathbb N$, there are only finitely many primitive $\syst$-critical points of index at most $k$ in $\bigsqcup_{(g,n)}\mgnb$.
\end{theorem}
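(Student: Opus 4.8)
The plan is to convert the statement about $\syst$-critical points into a finiteness statement for eutactic configurations of shortest geodesics, and then to bound the topology of any surface that can support a low-rank one. First, by Part~II of the Main Theorem, for each $(g,n)$ and all sufficiently small $T$ the primitive $\syst$-critical points of index $\le k$ correspond bijectively to the eutactic points $p\in\mgn$ with $r(p):=\rank\{\nabla l_\gamma\}_{\gamma\in S(p)}\le k$, the Morse index equalling $r(p)$. Since $\mgnb$ is compact and $\syst$ is Morse, each $\mgnb$ carries only finitely many $\syst$-critical points; so it suffices to exhibit a function $N(k)$ such that a eutactic point of rank $\le k$ can occur on $\mgn$ only when $6g-6+2n\le N(k)$, whence only finitely many pairs $(g,n)$ contribute, each finitely.

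Second I would pass to a bounded sub-configuration. Write the eutactic relation at $p$ as $\sum_{\gamma\in S(p)}a_\gamma\nabla l_\gamma=0$ with all $a_\gamma>0$, inside $V:=\spn\{\nabla l_\gamma\}_{\gamma\in S(p)}$, $\dim V=r(p)\le k$. Steinitz's refinement of Carath\'eodory's theorem, applied in $V\cong\mathbb R^{r(p)}$, yields a subset $S'\subseteq S(p)$ with $|S'|\le 2k$ for which $\{\nabla l_\gamma\}_{\gamma\in S'}$ is again eutactic and spans a subspace of dimension $\le k$. Thus $S'$ is a eutactic set of at most $2k$ simple closed geodesics, all of length $\sys(p)$.

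Third is the geometric core. The crucial structural fact is that $S'$ fills $p$: if a complementary region of $\bigcup_{\gamma\in S'}\gamma$ contained an essential simple closed curve $\delta$, its geodesic representative would be disjoint from every $\gamma\in S'$, and the positivity of the Weil--Petersson pairing of length gradients of disjoint geodesics (Theorem~\ref{lengthgradientshort}, and, for long geodesics, the positive series of \cite{riera2005formula}) would give $\langle\nabla l_\gamma,\nabla l_\delta\rangle>0$ for every $\gamma\in S'$; then all of $\{\nabla l_\gamma\}_{\gamma\in S'}$ lie in the open half-space $\{\langle\,\cdot\,,\nabla l_\delta\rangle>0\}$, so the origin is not in their convex hull, contradicting eutacticity. (The same positivity, via the analysis near the boundary in Section~\ref{boundary}, also keeps $p$ out of the thin part, so $\sys(p)$ is bounded below universally.) The remaining step — which I expect to be the main obstacle — is to turn ``at most $2k$ filling geodesics of length $\sys(p)$ whose length gradients span only $\le k$ dimensions'' into an upper bound on $\sys(p)$, and thereby on $6g-6+2n$. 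Here one combines collar and area estimates (an essential closed geodesic crosses the embedded collar of each $\gamma\in S'$ in arcs of length bounded below in terms of $\sys(p)$, which bounds $i(\gamma,\gamma')$; each complementary region of $\bigcup S'$ is a hyperbolic polygon or once-punctured polygon whose area is controlled by its number of sides, so $\mathrm{Area}(p)=2\pi(2g-2+n)$ is controlled by $\sum_{\gamma\ne\gamma'}i(\gamma,\gamma')$) with the linear-algebraic constraint $\dim\spn\{\nabla l_\gamma\}_{\gamma\in S'}\le k$. This quantitative hyperbolic-geometric argument is where the real work lies and is carried out in \cite{chen2023c2}; granting it, $6g-6+2n\le N(k)$, and the theorem follows from the per-$(g,n)$ finiteness noted above.
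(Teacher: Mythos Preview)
The paper does not actually prove this theorem in-text: it is stated in Section~\ref{applications} with the single sentence ``In \cite{chen2023c2}, the following property of the $\syst$ functions is shown, as a result of hyperbolic geometry,'' and no further argument. Your proposal is therefore already more detailed than what the paper offers, and it defers the hard quantitative step to the very same reference \cite{chen2023c2}, so the two are aligned in substance.

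Your outline is sound. The reduction to eutactic points of rank $\le k$ via Part~II of the Main Theorem is exactly right, and the Steinitz extraction of a eutactic subset $S'$ with $|S'|\le 2k$ is clean. The filling argument is correct and worth highlighting: from $\sum_{\gamma\in S'} a_\gamma\nabla l_\gamma=0$ with $a_\gamma>0$, pairing against $\nabla l_\delta$ for any $\delta$ disjoint from $S'$ gives $0=\sum a_\gamma\langle\nabla l_\gamma,\nabla l_\delta\rangle>0$ by the strict positivity of the Weil--Petersson length--length pairing for disjoint geodesics, a contradiction. One caution: you phrase the remaining step as first bounding $\sys(p)$ and then $6g-6+2n$, but there is no universal upper bound on the systole of eutactic surfaces across all $(g,n)$; the argument in \cite{chen2023c2} must exploit the filling by a \emph{bounded number} of systoles together with the rank constraint more directly to control the topology, rather than passing through a systole bound first. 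That is indeed where the real hyperbolic-geometric work lies, and you correctly flag it as the main obstacle.
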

Another way to state this theorem is that there are no primitive critical points of index at most $k$ in any large $\mgnb$ (or when either $g$ or $n$ is large). In this case, all critical points of index at most $k$ live in the boundary $\partial\mgn$, and can be constructed with primitive critical points in smaller $\mgnb$ due to Part III of the main theorem, allowing one to apply induction.

Applying Morse theory to $\syst$ on $\mgnb$, we have the following.

\begin{theorem}
\label{generalrank}
For any $k$, except for finitely many pairs $(g,n)$, the inclusion $$i\colon \partial\mgn\to\mgnb$$ induces an isomorphism
        $$i_*\colon H_k(\partial \mgn;\mathbb Q) \xrightarrow{\cong} H_k(\mgnb;\mathbb Q).$$
\end{theorem}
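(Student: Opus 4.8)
The plan is to deduce the statement from Morse theory for $\syst$ on $\mgnb$ (Part~I of the main theorem) together with the finiteness theorem for primitive critical points from \cite{chen2023c2}: for all but finitely many $(g,n)$, every $\syst$-critical point that is not forced onto $\partial\mgn$ has large index, so $\partial\mgn$ carries the low-degree homology of $\mgnb$. First I would pass to a finite cover $N$ of $\mgnb$ that is an honest manifold, on which the mapping class group acts through a finite quotient $G$, as in Section~\ref{applications}. Since $\syst$ is $G$-invariant and $C^2$-Morse, it lifts to a $G$-invariant $C^2$-Morse function on $N$, whose critical points are $G$-orbits of those downstairs with equal index, and $\partial N$ (the preimage of $\partial\mgn$) covers $\partial\mgn$. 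As $H_*(-;\mathbb Q)$ of the base is the $G$-invariant part of $H_*(-;\mathbb Q)$ of the cover (transfer), it suffices to prove that $\partial N\hookrightarrow N$ is a $G$-equivariant $\mathbb Q$-homology isomorphism through degree $k$.

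Next I would extract the local Morse picture near $\partial N$. By the Hessian computation at a boundary critical point in Section~\ref{extension} — the second-derivative matrix is block diagonal $\left(\begin{smallmatrix}2I & 0\\ 0 & \ast\end{smallmatrix}\right)$ with the normal block positive definite — and the positive pairing $\langle\nabla l_\beta,\nabla\syst\rangle>0$ of Remark~\ref{boundarydirection}, the negative Weil--Petersson gradient $-\nabla\syst$ is outward-pointing along all of $\partial N$, and at each boundary critical point the stable directions contain the full normal direction. Hence the index of a boundary critical point computed in $N$ equals its index inside the corresponding stratum, which by Part~III of the main theorem is the sum of the indices of the pieces. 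Together with Part~IV (each flow line stays in a single stratum), this forces the unstable manifold of every boundary critical point to lie in $\partial N$, so the negative gradient restricts to a gradient-type flow on $\partial N$ and the handle decomposition of $N$ induced by $\syst$ restricts to a (stratified) handle decomposition of $\partial N$. Equivalently, $N$ is built from $\partial N$ by attaching one handle of index $\ind(p)$ for each \emph{primitive} critical point $p$ (one lying in the open part $N\setminus\partial N$); the pair $(N,\partial N)$ has a relative handle decomposition with relative handles only at primitive critical points.

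Then I would invoke the finiteness theorem of \cite{chen2023c2} with parameter $k+1$: across all $(g,n)$ there are only finitely many primitive $\syst$-critical points of index $\le k+1$, so outside a finite set of exceptional pairs every primitive critical point of $\mgnb$ (hence of $N$) has index $\ge k+2$. For such $(g,n)$ the pair $(N,\partial N)$ is built using only relative handles of index $\ge k+2$; since attaching a handle of index $\lambda$ leaves $H_j(-;\mathbb Q)$ untouched for $j\le\lambda-2$, the inclusion $\partial N\hookrightarrow N$ induces a $G$-equivariant isomorphism on $H_j(-;\mathbb Q)$ for $j\le k$, and taking $G$-invariants gives $i_*\colon H_k(\partial\mgn;\mathbb Q)\xrightarrow{\ \cong\ }H_k(\mgnb;\mathbb Q)$ (after also discarding the finitely many pairs, such as small $(g,n)$ with $\partial\mgn=\varnothing$, where the statement is vacuous or false for trivial reasons).

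The main obstacle I expect is foundational rather than homological: even on the manifold cover $N$, the boundary $\partial N$ is stratified and singular along deeper strata, so upgrading ``$-\nabla\syst$ is outward along $\partial N$, the flow respects strata, and the normal Hessian is positive definite'' to a genuine relative handle decomposition of $(N,\partial N)$ — with no relative handles concealed at the corners — takes some care; one must check that the ascending and descending sets of boundary critical points are as claimed and that the stratum-wise flow of Part~IV patches into a global deformation realizing the attachments. An alternative that sidesteps the geometric packaging is to show directly that the quotient of the $\syst$-Morse complex of $N$ by the subcomplex spanned by boundary critical points is quasi-isomorphic to $C_*(N,\partial N;\mathbb Q)$ and is generated in degrees $\ge k+2$, then conclude from the long exact sequence of the pair; either way, once this is in place the degree bound and the conclusion are formal.
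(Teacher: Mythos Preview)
Your proposal is correct and follows essentially the same approach as the paper: pass to a finite manifold cover, use that all primitive $\syst$-critical points have large index so that the Morse chain complexes of $\mgnb$ and $\partial\mgn$ agree in the relevant degrees, and descend via transfer. The paper's proof is extremely terse (two sentences asserting that the Morse chain complexes coincide), whereas you have supplied the careful justification---including the correct degree shift (invoking the finiteness theorem with parameter $k+1$ so that no primitive handle can kill a class in $H_k$) and the legitimate foundational caveats about the stratified boundary that the paper leaves implicit.
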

\begin{proof}
    It is clear that $i_*$ is tautologically equivariant with respect to any finite manifold covering map of $\mgnb$. This follows from the fact that the Morse chain complex coincides for both $\mgnb$ and $\partial\mgn$.
\end{proof}


\newpage
\section*{Commonly Used Notations}
\begin{table*}[htbp]
\begin{center}
     \begin{tabular}{r c p{12cm} }
\toprule
&$J$ & multiindex; Weil-Petersson almost complex structure\\
&$K$ & multiindex\\
&$F_J$ & fan associated to $J$\\
&$D$ & constant; Weil-Petersson connection\\
&$\nabla$ & (Weil-Petersson) gradient vector\\
&$\norm*{\cdot}$ & (Weil-Petersson) norm\\
&$\gamma,\gamma_i$ & closed geodesic\\
&$\beta_i$ & pinched curve\\
&$l_\gamma,l_i$ & length function associated to $\gamma$, $\gamma_i$\\
&$\sys$ & systole function\\
&$\secsys$ & second systole function\\
&$\syst$ & defined to be $-T\log\sum_{\gamma \text{ s.c.g. on } X} e^{-\frac1Tl_\gamma(X)}$\\
&$S$ & pinched curve set\\
&$S(X)$ & set of shortest geodesics on a surface\\
&$c_X(L)$ & number of closed geodesics of length $\le L$ on $X$\\
&$r$ & number of shortest geodesics\\
&$I$ & index set for $S(X)$\\
&$I_{e}$ & subset of $I$ indexing the maximal eutactic subset\\
&$I_{b}$ & complement of $I_{e}$ in $I$ when $X$ is semi-eutactic\\
&$\mathbf C$ & intersection of cones\\
&$\ex_p$ & exponential map at $p$\\
&$i$ & an automorphism of $T_p\mathcal T$; intersection number\\
&$\ind$ & Morse index of a critical point\\
&$\mathcal T,\mathcal M$ & $\teich$ space, moduli space\\
&$\overline{\mathcal M},\partial\mathcal M,\mathcal S$ & Deligne-Mumford compactification, D-M boundary, stratum\\
&$T^{\sys},T_p^{\sys}\mathcal T$ & major subspace\\
&$T^{\sys\perp},T_p^{\sys\perp}\mathcal T$ & minor subspace\\
&$T^{e}, T^{e\perp\sys}$ & subspace spanned by $\{\nabla l_i\}_{I_{e}}$, orthogonal complement of $T^{e}$ in $T^{\sys}$\\
&$\widetilde\Omega_T,\Omega_T$ & see Definition \ref{setup}\\
&$\widetilde\Phi_T,\Phi_T$ & see Subsection \ref{behaviorinmajor}\\
&$\widetilde\Psi_T,\Psi_T$ & see Definition \ref{ep2}\\
&$\widetilde{H}_T, H_T$ & see Subsection \ref{nondegeneracy}\\

    \end{tabular}
\end{center}
    \label{tab:TableOfNotationForMyResearch}
\end{table*}

\newpage
\bibliographystyle{alpha}
\bibliography{main}

\end{document}